\author{Luke Warren}
\title{On slow escaping and non-escaping points of quasimeromorphic mappings}
\date{}
\begin{document}

\maketitle
\newtheorem{thm}{Theorem}[section]
\newtheorem{lem}[thm]{Lemma}
\newtheorem{cor}[thm]{Corollary}
\newtheorem{defn}[thm]{Definition}
\newtheorem{thm*}{Theorem}
\newtheorem{lem*}{Lemma}
\newtheorem{pf*}{Proof of Theorem}\numberwithin{equation}{section}
\theoremstyle{definition}
\newtheorem{ex}[thm]{Example}
\newtheorem{ack*}{Acknowledgements}

\newcommand{\capacity}{\operatorname{cap}}
\newcommand{\card}{\operatorname{card}}
\newcommand{\dist}{\operatorname{dist}}
\newcommand{\real}{\operatorname{Re}}
\newcommand{\im}{\operatorname{Im}}

	\begin{adjustwidth}{1cm}{1cm}
		\textbf{Abstract.} We show that for any quasimeromorphic mapping with an essential singularity at infinity, there exist points whose iterates tend to infinity arbitrarily slowly. This extends a result by Nicks for quasiregular mappings, and Rippon and Stallard for transcendental meromorphic functions on the complex plane. We further establish a new result for the growth rate of quasiregular mappings near an essential singularity, and briefly extend some results regarding the bounded orbit set and the bungee set to the quasimeromorphic setting.
	\end{adjustwidth}

\section{Introduction}

First introduced and studied by Eremenko \cite{Eremenko} for transcendental entire functions, and later extended to transcendental meromorphic functions $f$ by Dom\'{i}nguez \cite{Dominguez}, the escaping set is defined as
\begin{equation*}
I(f) = \{z \in \mathbb{C}: f^n(z) \neq \infty \text{ for all } n \in \mathbb{N}, f^n(z) \to \infty \text{ as } n \to \infty\}.
\end{equation*}
It has been shown in \cite{Dominguez, Eremenko} that $I(f) \neq \varnothing$ and the escaping set is strongly related to the Julia set, via $J(f) \cap I(f) \neq \varnothing$ and $J(f) = \partial I(f)$. Since then, properties of the escaping set have been extensively studied; see for example \cite{Marti-Pete1, RS3, RS4, RRRS1, Sixsmith1}

The fast escaping set $A(f) \subset I(f)$ was introduced by Bergweiler and Hinkkanen \cite{BH1} for transcendental entire functions. Subsequently, it was asked whether all escaping points could be fast escaping. Rippon and Stallard \cite{RS5} proved that this is not the case even for transcendental meromorphic mappings, showing that there always exist points in $J(f)$ that escape arbitrarily slowly under iteration. Other results in complex dynamics surrounding slow escape and different rates of escape have been studied in \cite{BP1, RS1, Sixsmith2, Waterman1}.

Quasiregular mappings and quasimeromorphic mappings generalise analytic and meromorphic functions on the plane to higher-dimensional Euclidean space $\mathbb{R}^d$, $d \geq 2$, respectively. We say that a quasiregular or quasimeromorphic mapping on $\mathbb{R}^d$ is of transcendental type if it has an essential singularity at infinity. In this new setting, some analogous results for the escaping set also hold; see \cite{BDF, BFLM}. In particular, Nicks \cite{Nicks1} recently extended the slow escape result to the case of quasiregular mappings of transcendental type. We defer the definition of quasiregular and quasimeromorphic mappings until Section~2.

Recently, the Julia set has been investigated for quasimeromorphic mappings of transcendental type with at least one pole in \cite{Warren1}, as follows.
\begin{align} \label{JuliaSetDefn}
J(f):= \{ &x \in \hat{\mathbb{R}}^d \setminus \overline{\mathcal{O}^{-}_{f}(\infty)} : \card(\hat{\mathbb{R}}^d \setminus \mathcal{O}^{+}_{f}(U_{x}))< \infty \text{ for all} \nonumber \\ 
&\text{neighbourhoods } U_{x} \subset \hat{\mathbb{R}}^d \setminus \overline{\mathcal{O}^{-}_{f}(\infty)} \text{ of } x \} \cup \overline{\mathcal{O}^{-}_{f}(\infty)}.
\end{align}
Here for $x \in \hat{\mathbb{R}}^d = \mathbb{R}^d \cup \{\infty\}$, we use the notation $\mathcal{O}^{-}_{f}(x)$ to denote the backward orbit of $x$, while we use $\mathcal{O}^{+}_{f}(x)$ to denote the forward orbit of $x$.

Using similar techniques to those from \cite{Nicks1} and \cite{BFLM}, it has been possible to extend the slow escape result to the case of quasimeromorphic mappings of transcendental type with at least one pole.

\begin{thm} \label{MainTheorem}
	Let $f: \mathbb{R}^d \to \hat{\mathbb{R}}^d$ be a quasimeromorphic map of transcendental type with at least one pole. Then for any positive sequence $a_n \to \infty$, there exists $\zeta \in J(f)$ and $N \in \mathbb{N}$ such that $|f^n(\zeta)| \to \infty$ as $n \to \infty$, while also $|f^n(\zeta)| \leq a_n$ whenever $n \geq N$.
\end{thm}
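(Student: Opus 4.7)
The plan is to follow the scheme of Nicks \cite{Nicks1} for quasiregular mappings of transcendental type, modified to exploit the pole of $f$ as in \cite{BFLM, Warren1}. First, by replacing $(a_n)$ with the sequence $n \mapsto \min_{k \geq n} a_k$, which is nondecreasing and also tends to infinity, I may assume $(a_n)$ is itself nondecreasing. I then fix an auxiliary increasing sequence of radii $R_n \to \infty$ with $R_n \leq a_n$ for all sufficiently large $n$, chosen to grow slowly enough that the pullback step below is available at each stage.

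The heart of the argument is a pullback lemma: for every $R$ above some threshold and every $R'$ in a suitable interval above $R$, there exists a compact set $L \subseteq S(0, R) := \{x \in \mathbb{R}^d : |x| = R\}$ with $f(L) \supseteq S(0, R')$. I would establish this using two complementary ingredients. First, growth estimates at infinity: by the essential singularity and Rickman-type results, the maximum and minimum moduli of $f$ on $S(0, R)$ both tend to infinity with $R$, and a connectedness argument ensures that $|f|$ attains every value in a long interval. Second, local surjectivity near preimages of infinity: for any $p \in \mathcal{O}^-_f(\infty)$, a suitable iterate of $f$ maps a small ball around $p$ onto a neighborhood of infinity, yielding preimages of arbitrarily large values inside compact regions. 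By \eqref{JuliaSetDefn} such preimages accumulate throughout the Julia set, so they are available at a range of scales.

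Given the pullback lemma, I would inductively construct nonempty compact sets $K_0 \supseteq K_1 \supseteq \cdots$ with $f^n(K_n) \supseteq S(0, R_n)$. Starting from $K_0 = S(0, R_0)$, at each step I apply the pullback to obtain $L_n \subseteq S(0, R_n)$ with $f(L_n) \supseteq S(0, R_{n+1})$, and set $K_{n+1} = K_n \cap f^{-n}(L_n)$; this is nonempty because $f^n$ surjects $K_n$ onto $S(0, R_n)$. A point $\zeta \in \bigcap_n K_n$ (nonempty by compactness) then satisfies $|f^n(\zeta)| = R_n \leq a_n$ for $n \geq N$ and $|f^n(\zeta)| \to \infty$. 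Arranging at each step that $K_n$ meets $\mathcal{O}^-_f(\infty)$ (by routing the pullback through a preimage of the pole) places $\zeta \in \overline{\mathcal{O}^-_f(\infty)} \subseteq J(f)$ by \eqref{JuliaSetDefn}.

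The main obstacle will be the pullback lemma for quasimeromorphic maps: the image $f(S(0, R))$ need not be a sphere or even topologically simple, so showing that a compact preimage $L$ of $S(0, R')$ sits inside $S(0, R)$ requires a careful combination of growth estimates at infinity with local surjectivity at preimages of infinity. The assumption of a pole is essential here, since it supplies, via the density of $\mathcal{O}^-_f(\infty)$, a reliable source of pullbacks at every required scale and thus permits the inductive step to be carried out at arbitrarily large $R_n$.
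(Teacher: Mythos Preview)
Your pullback lemma is the crux of the proposal, and as stated it cannot hold. You ask for a compact $L \subseteq S(0,R)$ with $f(L) \supseteq S(0,R')$; but $S(0,R)$ is a $(d-1)$-dimensional sphere, and for a general quasimeromorphic $f$ the image $f(S(0,R))$ is just some compact set with no reason to contain any full sphere $S(0,R')$. Already for $f(z)=e^z$ on $\mathbb{C}$ the image $f(S(0,R))$ is a curve, not a region, and contains no circle. Relatedly, your claim that the minimum modulus of $f$ on $S(0,R)$ tends to infinity is false: $e^z$ again has minimum modulus $e^{-R}\to 0$, and once poles are allowed one cannot even bound $|f|$ away from zero or infinity on large spheres. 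This is precisely why the paper (and \cite{Nicks1}) must split into the ``pits effect'' and ``no pits effect'' cases rather than relying on any global minimum-modulus growth.

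There is also a structural gap: even if a pullback from one scale to a strictly larger scale were available, you need the interval of admissible $R'$ to start arbitrarily close to $R$; otherwise the orbit is forced outward too fast to stay below an arbitrary $a_n$. The standard remedy is a \emph{holding-up} mechanism---sets that cover themselves under $f$ so the orbit can loiter before advancing---and this is absent from your scheme. The paper builds exactly such sets: in the finitely-many-poles case it constructs families $X_m^{(i)}$ satisfying both forward covering (X1) and self-covering (X2) and then invokes Lemma~\ref{lem:HoldUp}; in the infinitely-many-poles case it uses pole neighbourhoods $U_m$ together with auxiliary sets $E_m$ so that $f(U_m)\supset E_m$ and $f(E_m)\supset U_m$ provide the waiting step, while $f(U_m)\supset U_{m+1}$ provides the advancing step (Lemma~\ref{lem:HoldUp2} and Lemma~\ref{EExists}). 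Your idea of routing through preimages of the pole is in the right spirit for the second case, but it must be packaged as a pair of bounded sets with this two-way covering, not as a sphere-to-sphere pullback.
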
	
	
	Although Rippon and Stallard \cite{RS5} proved this theorem for transcendental meromorphic functions, their method relied on results that do not extend to the quasimeromorphic setting. In particular, in the case when there are infinitely many poles, they used a version of the Ahlfors five islands theorem. The proof given here offers an alternative proof in the meromorphic case which is, in some sense, more elementary.

	During Nicks's proof of the existence of slow escaping points in \cite{Nicks1}, an important growth result by Bergweiler was needed \cite[Lemma~3.3]{Bergweiler2}, which was concerned with the growth rate of quasiregular mappings of transcendental type defined on the whole of $\mathbb{R}^d$. We have been able to extend this result to the case where the mapping is quasiregular in a neighbourhood of an essential singularity. 
	
	In what follows, we denote the region between two spheres centered at the origin of radii $0 \leq r < s \leq \infty$, by
\begin{equation*}
A(r,s) = \{ x \in \mathbb{R}^d : r < |x| < s \}.
\end{equation*}
Further, for a quasiregular mapping $f: A(R,S) \to \mathbb{R}^d$ and a given $R<r<S$, the maximum modulus is defined by $M(r,f) = \max\{|f(x)| : |x|=r\}$.

\begin{thm} \label{GrowthLimitTheorem}
	Let $R > 0$, let $f: A(R, \infty) \to \mathbb{R}^d$ be a quasiregular map with an essential singularity at infinity, and let $A>1$. Then 
	\begin{equation*}
	\lim_{r \to \infty} \frac{M(Ar,f)}{M(r,f)} = \infty.
	\end{equation*}
\end{thm}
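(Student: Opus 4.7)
The plan is to argue by contradiction, adapting the rescaling argument behind Bergweiler's \cite[Lemma~3.3]{Bergweiler2} (which handles the entire case) to the annular setting. Suppose the conclusion fails; then there exist $A>1$, $C\geq 1$, and a sequence $r_n\to\infty$ with $M(Ar_n,f)\leq C\,M(r_n,f)$ for every $n$. A preliminary step is to check that $M(r,f)\to\infty$ as $r\to\infty$: since open mappings attain $|f|$-maxima on compact annuli along the bounding spheres, and a Miniowitz--Rickman Picard-type theorem combined with the essential singularity forbids $f$ from being bounded in any neighbourhood of $\infty$, unboundedness of $M$ can be upgraded to $M(r,f)\to\infty$.

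The core of the argument is rescaling. Define the $K$-quasiregular maps
\begin{equation*}
g_n(x)=\frac{f(r_n x)}{M(r_n,f)}, \qquad x\in A(R/r_n,\infty),
\end{equation*}
each with the same distortion constant as $f$ and satisfying $\max_{|x|=1}|g_n(x)|=1$ by construction. Using openness of $g_n$ and the resulting max-on-boundary principle on the annulus $A(R/r_n,A)$, one obtains $|g_n(x)|\leq \max\{M(R,f)/M(r_n,f),\,C\}$, which is at most $C$ for large $n$. Combined with H\"older continuity estimates for bounded $K$-quasiregular maps (Reshetnyak/Rickman), this makes $\{g_n\}$ equicontinuous on every compact sub-annulus of $A(0,A)$. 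An Arzel\`a--Ascoli diagonal extraction then produces a subsequence converging locally uniformly on $A(0,A)$ to a $K$-quasiregular limit $g$, which is non-constant (since $\max_{|x|=1}|g(x)|=1$ passes to the limit) and bounded. The removable singularity theorem for bounded quasiregular maps extends $g$ across $0$ to a $K$-quasiregular map $g\colon B(0,A)\to\mathbb{R}^d$.

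The main obstacle is extracting a contradiction from the existence of such a limit $g$. The guiding intuition is that an essential singularity forces $f$ to be too wild near $\infty$ for its rescalings to converge to so regular an object. One natural route is to track the omitted values of each $g_n$: these are precisely the omitted values of $f|_{A(R,\infty)}$ (at most finitely many, by the quasiregular Picard theorem) scaled by $1/M(r_n,f)$, and hence cluster at $0$ in the limit. Feeding this into Miniowitz's normal-families theorem with a drifting omitted set should force $g$ either to be constant or to omit an entire punctured neighbourhood of $0$, either of which contradicts $\max_{|x|=1}|g(x)|=1$ together with $g$ being open. An alternative route---perhaps more robust in the annular setting---is to iterate the ratio bound along $r_n$ so as to obtain polynomial growth of $f$ on a family of annuli filling out a neighbourhood of $\infty$, and then invoke a growth-based classification showing that a polynomially-growing quasiregular mapping near an isolated singularity must have only a pole or a removable singularity there, giving the desired contradiction.
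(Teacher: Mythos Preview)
Your rescaling setup matches the paper's: define $g_n(x)=f(r_nx)/M(r_n,f)$, use boundedness on $A(R/r_n,A)$ to extract a locally uniform limit $g$ on $B(0,A)\setminus\{0\}$, and remove the singularity at $0$. The divergence is entirely in how the contradiction is reached, and both of your suggested routes have genuine gaps.

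For Route 1, the omitted-value argument cannot work as stated. First, $f$ need not omit any value on $A(R,\infty)$: Rickman's theorem only bounds the number of omitted values from above, and a quasiregular map with an essential singularity may well be surjective onto $\mathbb{R}^d$. Second, even if $f$ omits finitely many points $a_i$, the rescaled omitted values $a_i/M(r_n,f)$ all collapse to the single point $0$; a Hurwitz-type argument would at best yield that $g$ omits $0$ or is constant, not that $g$ omits a punctured neighbourhood. An open quasiregular map on $B(0,A)$ omitting the single value $0$ is not contradictory. Third, your non-constancy claim is incomplete: $\max_{|x|=1}|g(x)|=1$ is perfectly consistent with $g$ being a constant of modulus $1$.

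For Route 2, the ratio bound $M(Ar_n,f)\leq C\,M(r_n,f)$ holds only along the sequence $(r_n)$, so you cannot iterate it to obtain $M(A^k r,f)\leq C^k M(r,f)$ and hence polynomial growth on a full neighbourhood of infinity. Moreover, the ``growth-based classification'' you invoke is essentially the contrapositive of Corollary~\ref{GrowthLimitTheoremCorollary}, which is a consequence of the theorem you are trying to prove, so this route is circular unless you supply an independent proof of that classification in the annular setting.

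What the paper actually does after obtaining the limit $h$ is a Brouwer-degree argument. A separate lemma shows $h(0)=0$ (this is the step that also establishes non-constancy), using that if $h(0)\neq 0$ then the images of two well-separated spheres $r_nC_M$ and $r_{n+t}C_M$ under $f$ would lie in disjoint balls, forcing $f$ to have unbounded image on the compact annulus between them. Then, after translating so that $f$ has infinitely many zeros (Rickman guarantees some value is taken infinitely often), one picks an annulus $U=A(t_1,t_2)$ on which $h$ is bounded away from $0$ on the outer sphere and small on the inner sphere, and shows that $\mu(y,f_n,U)=\mu(y,h,U)=:d<\infty$ for a suitable $y$ and all large $n$. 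A telescoping degree computation then gives $\mu(y,f_n,U)=\mu(0,f,A(r_N t_2,r_n t_2))+d_N\to\infty$, since $f$ has infinitely many zeros. This contradiction---finite versus unbounded degree---is the missing idea in your proposal.
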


	As an immediate consequence of Theorem~\ref{GrowthLimitTheorem}, we get the following useful corollary.
	
\begin{cor} \label{GrowthLimitTheoremCorollary}
	Let $R > 0$ and $f: A(R, \infty) \to \mathbb{R}^d$ be a quasiregular map with an essential singularity at infinity. Then 
	\begin{equation*}
	\lim_{r \to \infty} \frac{\log M(r,f)}{\log r} = \infty.
	\end{equation*}
\end{cor}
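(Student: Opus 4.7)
The plan is to deduce the corollary from Theorem~\ref{GrowthLimitTheorem} by a direct iteration argument. Fix any $A > 1$ (say $A = 2$) and let $K > 1$ be arbitrary. By Theorem~\ref{GrowthLimitTheorem}, there exists $r_0 > R$ such that
\[
M(Ar, f) \geq K\, M(r, f) \quad \text{for all } r \geq r_0.
\]
Iterating this inequality $n$ times gives $M(A^n s, f) \geq K^n M(s, f)$ for every $s \geq r_0$ and every $n \in \mathbb{N}$.

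Next, I would verify that $m := \min_{s \in [r_0, Ar_0]} M(s, f) > 0$. Since $f$ is non-constant (it has an essential singularity at infinity), its zero set is discrete in $A(R, \infty)$, so on any sphere $\{|x|=s\}$ there is a point with $f(x)\neq 0$, giving $M(s,f) > 0$ for every $s > R$. The continuity of $s \mapsto M(s, f)$ then forces the minimum over the compact interval $[r_0, A r_0]$ to be strictly positive.

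Finally, given any $\rho \geq Ar_0$, I would write $\rho = A^n s$ with $s \in [r_0, Ar_0)$ and $n \geq (\log \rho - \log(Ar_0))/\log A$. Combining with the iterated bound yields
\[
\log M(\rho, f) \geq n \log K + \log m \geq \frac{\log K}{\log A}\bigl(\log \rho - \log(Ar_0)\bigr) + \log m.
\]
Dividing by $\log \rho$ and letting $\rho \to \infty$ gives $\liminf_{\rho \to \infty} \log M(\rho, f)/\log \rho \geq \log K/\log A$; since $K$ was arbitrary, the limit is $+\infty$, as required. The only potential technicality is the positivity of $m$, which reduces to the standard fact that the zero set of a non-constant quasiregular mapping is discrete.
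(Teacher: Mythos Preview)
Your argument is correct and is precisely the standard iteration that the paper has in mind when it calls the corollary ``an immediate consequence'' of Theorem~\ref{GrowthLimitTheorem}; the paper gives no separate proof. The only place you do a little more work than necessary is the positivity of $m$: by Lemma~\ref{R'lemma} one may take $r_0$ large enough that $M(r,f)$ is strictly increasing (and in particular positive, since $M(r,f)\to\infty$) on $[r_0,\infty)$, so $m=M(r_0,f)>0$ without appealing to discreteness of the zero set or continuity of $s\mapsto M(s,f)$.
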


Theorem~\ref{GrowthLimitTheorem} and Corollary~\ref{GrowthLimitTheoremCorollary} will be used in the proof of Theorem~\ref{MainTheorem} in the case when there are finitely many poles. Furthermore, Theorem~\ref{GrowthLimitTheorem} can be applied in the proof of \cite[Lemma~2.6]{NS1} to rectify an omission there. Namely, in \cite{NS1} it is claimed that the proof of a statement like Theorem~\ref{GrowthLimitTheorem} is similar to the proof of Bergweiler's result \cite[Lemma~3.3]{Bergweiler2}. However, part of the proof in \cite{Bergweiler2} relies upon the function being quasiregular on the whole of $\mathbb{R}^d$. This means that it cannot be applied when the function is only quasiregular in a neighbourhood of an essential singularity, as in both \cite[Lemma~2.6]{NS1} and Theorem~\ref{GrowthLimitTheorem}. Nonetheless, we will show in Section~3 that it is possible to significantly adapt the ideas in \cite{Bergweiler2} to obtain a proof of Theorem~\ref{GrowthLimitTheorem}. These new results may be of independent interest.

	Alongside the escaping set $I(f)$, it is useful to consider the sets 
	\begin{align*}
	BO(f) &:=\{x \in \mathbb{R}^d : \{f^n(x): n \in \mathbb{N}\} \text{ is bounded}\}, \text{ and}\\
	BU(f) &:= \mathbb{R}^d \setminus \left(I(f) \cup BO(f) \cup \mathcal{O}_{f}^{-}(\infty) \right).
	\end{align*}
These sets are known as the bounded orbit set and the bungee set respectively; $BO(f)$ consists of points with a bounded forward orbit, while $BU(f)$ consists of points $x$ whose sequence of iterates $(f^n(x))$ contains both a bounded subsequence and a subsequence that tends to infinity. Together with $I(f)$ and $\mathcal{O}_{f}^{-}(\infty)$, these sets partition $\mathbb{R}^d$ based on the behaviour of the forward orbit of the points. Further, it is clear by their definitions that $BO(f)$ and $BU(f)$ are also completely invariant under $f$. 

For a transcendental entire function $f$, the sets $BO(f)$ and $BU(f)$ have been well studied; for the former, see for example \cite{Bergweiler5, Osborne1}, while for the latter we refer to \cite{EL2, OS1, Sixsmith3}. It should be noted that $BO(f)$ is often denoted as $K(f)$ in the literature, however this notation is not used in the quasiregular setting because $K(f)$ is reserved for the dilatation of a quasiregular mapping.

When $f$ is a transcendental meromorphic function, by following a similar argument to that given in \cite[Proof of Theorem~1.1]{OS1} and using the fact that $BU(f) \neq \varnothing$ (which shall follow from Theorem~\ref{BoundaryTheorem}), we get the following relationship between these sets and the Julia set.

\begin{equation}\label{BoundaryEquality}
J(f) = \partial I(f) = \partial BO(f) = \partial BU(f).
\end{equation}

Some results for $BO(f)$ and $BU(f)$ were successfully extended to the case where $f$ is quasiregular of transcendental type in \cite{BN1} and \cite{NS3} respectively. For instance, it was shown that both $BO(f)$ and $BU(f)$ intersect $J(f)$ infinitely often, and $J(f) \subset \partial I(f) \cap \partial BO(f)$. Further, for many quasiregular mappings of transcendental type we also have that $J(f) \subset \partial BU(f)$. However, examples in \cite{BN1, NS3} show that \eqref{BoundaryEquality} does not extend to entire quasiregular mappings of transcendental type. 

For quasimeromorphic mappings of transcendental type with at least one pole, we find that analogous results hold. 

\begin{thm}\label{BoundaryTheorem}
Let $f: \mathbb{R}^d \to \hat{\mathbb{R}}^d$ be a quasimeromorphic map of transcendental type with at least one pole. Then 
\begin{enumerate}
\item[\emph{(i)}] $BO(f) \cap J(f)$ and $BU(f) \cap J(f)$ are infinite, and
\item[\emph{(ii)}] $J(f) \subset \partial I(f) \cap \partial BO(f) \cap \partial BU(f)$.
\end{enumerate}
\end{thm}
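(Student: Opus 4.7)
The engine of both parts is the blow-up property in~(\ref{JuliaSetDefn}): for $x_0 \in J(f) \setminus \overline{\mathcal{O}_f^-(\infty)}$ and a small neighbourhood $U \ni x_0$ with $\overline{U} \cap \overline{\mathcal{O}_f^-(\infty)} = \varnothing$, the forward orbit $\mathcal{O}_f^+(U)$ omits only finitely many points of $\hat{\mathbb{R}}^d$. Combined with closedness and backward invariance of $J(f)$, every preimage of a non-exceptional Julia point lying in $U$ is also in $J(f) \cap U$, and every compact non-exceptional target is hit by some forward iterate of $U$. This is the mechanism by which prescribed orbit behaviours will be planted inside arbitrary Julia neighbourhoods.

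For~(i), I would first locate a periodic point $p \in J(f) \cap BO(f)$ via a degree-type argument applied to the blow-up property, in the spirit of \cite{BN1}; since $p$'s orbit is finite, $p \in BO(f)$ automatically. To obtain infinitely many points of $BO(f) \cap J(f)$: for any small neighbourhood $U_1$ of some other $x_1 \in J(f) \setminus \overline{\mathcal{O}_f^-(\infty)}$, blow-up supplies $z \in U_1$ and $n \geq 1$ with $f^n(z) = p$. The forward orbit of $z$ then consists of the finite segment $z, f(z), \ldots, f^{n-1}(z)$ followed by the finite periodic orbit of $p$, so $z \in BO(f)$, and backward invariance of $J(f)$ gives $z \in J(f)$. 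Varying $U_1$ (or $x_1$) across infinitely many disjoint choices yields infinitely many such $z$. For $BU(f) \cap J(f)$ infinite, interleave pullbacks to $p$ with pullbacks toward infinity: fix $R_k \to \infty$ and construct nested $V_0 \supset \overline{V_1} \supset \overline{V_2} \supset \cdots$ with times $N_1 < N_2 < \cdots$ so that $f^{N_{2k-1}}(V_{2k-1})$ lies in a small fixed neighbourhood of $p$ while $f^{N_{2k}}(V_{2k}) \subset \{|y| > R_k\}$, both achievable via blow-up since both targets are non-exceptional. Any $\zeta \in \bigcap_k \overline{V_k}$ has iterates accumulating near the orbit of $p$ (a bounded subsequence) together with iterates of modulus exceeding $R_k$ (an unbounded subsequence), placing $\zeta$ in $BU(f) \cap J(f)$; varying base data gives infinitely many such $\zeta$.

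For~(ii), since $I(f), BO(f), BU(f), \mathcal{O}_f^-(\infty)$ are pairwise disjoint in $\mathbb{R}^d$, it suffices to show that every neighbourhood of every $x \in J(f)$ meets each of $I(f), BO(f), BU(f)$. For $x \in J(f) \setminus \overline{\mathcal{O}_f^-(\infty)}$, the $BO$ and $BU$ constructions from~(i) localize into any such neighbourhood, and $I(f)$ is handled by pulling back a point of $I(f) \cap J(f)$, whose non-emptiness follows from Theorem~\ref{MainTheorem}. For $x \in \overline{\mathcal{O}_f^-(\infty)}$, first approximate $x$ by a blow-up point in the dense subset $J(f) \setminus \overline{\mathcal{O}_f^-(\infty)}$ and reduce to the previous case. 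The main technical obstacle is the bungee construction: at each stage $V_k$ must be shrunk enough to control every intermediate iterate up to time $N_k$ without destroying the blow-up property needed at stage $k+1$. I expect continuity of the iterates, together with the openness built into the blow-up definition, to handle this, closely paralleling the quasiregular argument in~\cite{NS3} after adapting it to the definition (\ref{JuliaSetDefn}) of $J(f)$.
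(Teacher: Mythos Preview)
Your treatment of part~(ii) is correct and coincides with the paper's: the paper packages the argument as a short lemma stating that if $X \subset \mathbb{R}^d$ is completely invariant, infinite, and has $\mathbb{R}^d \setminus (X \cup \mathcal{O}_f^-(\infty))$ infinite, then $J(f) \subset \partial X$; this is proved exactly by the blow-up mechanism of Theorem~\ref{JuliaSetProperties}(vi) that you describe, and is then applied to each of $I(f)$, $BO(f)$, $BU(f)$ once part~(i) is in hand.

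Part~(i) is where your route and the paper's diverge. The paper does not search for a periodic point at all. Instead it recycles the concrete covering machinery already built in the proof of Theorem~\ref{MainTheorem}: in each of the three cases (pits effect, no pits effect, infinitely many poles) one already has bounded sets meeting $J(f)$ together with finite covering cycles (condition~(BO1)) and ``out-and-back'' covering chains reaching arbitrarily large modulus (conditions~(BU1)--(BU3)); Lemma~\ref{NestedSeqn} then manufactures the required points of $BO(f)\cap J(f)$ and $BU(f)\cap J(f)$ directly. The cost is that the argument splits into the same three cases as Theorem~\ref{MainTheorem}; the benefit is that no new existence result is needed.

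Your alternative has a genuine gap at its first step. You assert that a periodic point $p \in J(f)$ can be found ``via a degree-type argument applied to the blow-up property, in the spirit of~\cite{BN1}'', but the periodic-point arguments in~\cite{BN1} exploit the fact that every iterate $f^n$ is quasiregular on all of $\mathbb{R}^d$, which is exactly what fails once poles are present. Producing a periodic point in $J(f)$ for a quasimeromorphic map of transcendental type is not established anywhere in this paper, is not quoted from~\cite{Warren1}, and would require you to arrange that the relevant iterate is pole-free on the domain where the degree argument is run. Until that is supplied, both your pull-back scheme for $BO(f)$ and your interleaved nested construction for $BU(f)$ lack an anchor. (By contrast, the nesting issue you flag as the ``main technical obstacle'' is not the real difficulty: Theorem~\ref{JuliaSetProperties}(vi) applies to \emph{any} open set meeting $J(f)$, so maintaining $V_k \cap J(f) \neq \varnothing$ through the construction follows from backward invariance.) If you replace the periodic point by a covering cycle $f(U_0)\supset U_1, \dots, f(U_N)\supset U_0$ with $\overline{U_0}\cap J(f)\neq\varnothing$ and obtain your bounded-orbit anchor from Lemma~\ref{NestedSeqn}, your scheme becomes viable---but finding such a cycle without the work of Sections~4 and~5 is precisely what the paper's case analysis provides.
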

 
	By extending the examples mentioned above, we can show that equality in Theorem~\ref{BoundaryTheorem}(ii) need not hold for general mappings in the new setting. Example~\ref{Example1} and Example~\ref{Example2} shall show that it is possible to have $(\partial I(f) \cap \partial BO(f)) \setminus J(f) \neq \varnothing$ and $\partial BU(f) \setminus J(f) \neq \varnothing$ respectively.

	The majority of this paper will be dedicated to the proof of Theorem~\ref{MainTheorem}, which shall be completed in two parts. Firstly, Section~2 will be dedicated to stating definitions and preliminary results. In Section~3 we will prove Theorem~\ref{GrowthLimitTheorem}. From here, following a similar argument by Nicks \cite{Nicks1}, the case when the mapping $f$ has finitely many poles in Theorem~\ref{MainTheorem} will be proven in Section~4, by considering whether $f$ has the `pits effect' (see Section~4.1) or not. In Section~5, we treat the remaining case where $f$ has infinitely many poles. Finally, in Section~6 we will prove Theorem~\ref{BoundaryTheorem} and provide counterexamples to \eqref{BoundaryEquality} in the new setting.
	
\section{Preliminary results}
	
	\subsection{Quasiregular and quasimeromorphic mappings}
	
For notation, for $d \geq 2$ and $x \in \mathbb{R}^d$ we denote the $d$-dimensional ball centered at $x$ of radius $r>0$ as $B(x,r) = \{y \in \mathbb{R}^d : |x-y|<r\}$. We also denote the $(d-1)$-sphere centered at the origin of radius $r>0$ by $S(r) = \partial B(0,r)$.	
	
	We shall briefly recall the definition and some main results of quasiregular and quasimeromorphic mappings here. For a more comprehensive introduction to these mappings, we refer to \cite{MRV1}, \cite{Reshetnyak1} and \cite{Rickman3}.
	
Let $d \geq 2$ and $U \subset \mathbb{R}^d$ be a domain. For $1 \leq p < \infty$, the Sobolev space $W_{p,loc}^{1}(U)$ consists of all functions $f:U \to \mathbb{R}^d$ for which all first order weak partial derivatives exist and are locally in $L^p(U)$. A continuous map $f \in  W_{d,loc}^{1}(U)$ is called quasiregular if there exists some constant $K\geq 1$ such that 
\begin{equation}\label{qrDefn1}
\left(\sup_{|h|=1}|Df(x)(h)|\right)^d \leq KJ_{f}(x) \text{ a.e.},
\end{equation}
where $Df(x)$ denotes the derivative of $f(x)$ and $J_{f}(x)$ denotes the Jacobian determinant. The smallest constant $K$ for which \eqref{qrDefn1} holds is called the outer dilatation and denoted $K_{O}(f)$.

If $f$ is quasiregular, then there also exists some $K' \geq 1$ such that
\begin{equation}\label{qrDefn2}
K'\left(\inf_{|h|=1}|Df(x)(h)|\right)^d \geq J_{f}(x) \text{ a.e.}
\end{equation}
The smallest constant $K'$ for which \eqref{qrDefn2} holds is called the inner dilatation and denoted $K_{I}(f)$. Finally, the dilatation of $f$ is defined as $K(f) := \max\{K_{O}(f), K_{I}(f)\}$ and if $K(f) \leq K$ for some $K \geq 1$, then we say that $f$ is $K$-quasiregular.
	
The definition of quasiregularity can be naturally extended to mappings into $\hat{\mathbb{R}}^d := \mathbb{R}^d \cup \{\infty\}$. For a domain $D \subset \mathbb{R}^d$, we say that a continuous map $f : D \to \hat{\mathbb{R}}^d$ is called quasimeromorphic if every $x \in D$ has a neighbourhood $U_{x}$ such that either $f$ or $M \circ f$ is quasiregular from $U_{x}$ into $\mathbb{R}^d$, where $M:\hat{\mathbb{R}}^d \to \hat{\mathbb{R}}^d$ is a sense-preserving M\"{o}bius map such that $M(\infty) \in \mathbb{R}^d$.

	If $f$ and $g$ are quasiregular mappings, with $f$ defined in the range of $g$, then $f \circ g$ is quasiregular, with 
	\begin{equation}\label{compositionDilatation}
	K(f \circ g) \leq K(f)K(g).
	\end{equation}
	
Similarly, if $g$ is a quasiregular mapping and $f$ is a quasimeromorphic mapping defined in the range of $g$, then $f \circ g$ is quasimeromorphic and the above inequality also holds.

It was established by Reshetnyak \cite{Reshetnyak1, Reshetnyak2}, that every non-constant $K$-quasiregular map $f$ is discrete and open. Moreover, many other properties of analytic and meromorphic mappings have analogues for quasiregular and quasimeromorphic mappings, such as the following analogue of Picard's theorem by Rickman \cite{Rickman1, Rickman2}.

\begin{thm} \label{PicardTheorem}
	Let $d \geq 2$, $K \geq 1$. Then there exists a positive integer $\tilde{q_{0}} = \tilde{q_{0}}(d,K)$, called Rickman's constant, such that if $R>0$ and $f : A(R, \infty) \to \hat{\mathbb{R}}^d\setminus\{a_1, a_2, \dots, a_{\tilde{q_{0}}} \}$ is a $K$-quasimeromorphic mapping with $a_{1}, a_{2},\dots,a_{\tilde{q_{0}}} \in \hat{\mathbb{R}}^d$ distinct, then $f$ has a limit at $\infty$.
	
	In particular, if $b_{1}, b_{2},\dots, b_{\tilde{q_{0}}} \in \hat{\mathbb{R}}^d$ are distinct points and $f:\mathbb{R}^d \to \hat{\mathbb{R}}^d$ is a $K$-quasimeromorphic mapping of transcendental type, then there exists some $i \in \{1,2,\dots, \tilde{q_{0}}\}$ such that $f^{-1}(b_{i})$ contains points of arbitrarily large modulus.
\end{thm}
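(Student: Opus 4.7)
The ``in particular'' second assertion will follow from the first by a direct contradiction: if each preimage set $f^{-1}(b_i)$ were bounded, then for some large $R$ the restriction of $f$ to $A(R,\infty)$ would omit all $\tilde{q_{0}}$ values $b_i$, and the first statement would then produce a limit of $f$ at infinity, contradicting the fact that infinity is an essential singularity. So the real work lies in proving the first part.

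For the first part -- Rickman's analogue of Picard's theorem -- I would follow the value-distribution strategy that Rickman developed for quasimeromorphic mappings. The central objects are the spherical counting function $n(r,y) = \sum_{x \in f^{-1}(y) \cap B(0,r)} i(x,f)$ (weighted by local index) and its averaged version, the quasimeromorphic analogue of the Nevanlinna characteristic. The core analytic ingredient is a defect relation of the form $\sum_i \delta(a_i) \leq C(d,K)$ for any finite collection of distinct target values $a_i$, where each defect $\delta(a_i)$ lies in $[0,1]$ and equals $1$ whenever $a_i$ is omitted. Since one can take $\tilde{q_{0}} := \lfloor C(d,K) \rfloor + 1$, having more than $\tilde{q_{0}}$ omitted values would force the defect sum to exceed $C(d,K)$, provided that the averaged counting function tends to infinity -- which it does whenever the singularity at infinity is essential, by a cluster-value argument applied to any non-omitted target that accumulates at infinity.

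The substantial difficulty is proving the defect inequality itself. This uses modulus-of-curve-family estimates, a length-area inequality, and an analogue of Ahlfors's theory of covering surfaces, all adapted to the quasimeromorphic setting. The main obstacle will be bounding from below the modulus of certain path families linking preimages over disjoint target disks on the sphere $\hat{\mathbb{R}}^d$, since this is precisely where the dilatation $K$ and dimension $d$ enter and ultimately determine the size of $\tilde{q_{0}}$. A minor technical point is that $f$ is defined only on $A(R,\infty)$ rather than on all of $\mathbb{R}^d$, but this causes no trouble: the Rickman estimates are carried out on spherical shells $A(r, \lambda r)$ for $r$ large, where $f$ is defined, and the essential singularity at infinity supplies the growth required to run the argument.
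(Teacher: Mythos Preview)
The paper does not prove this theorem; it is stated as a preliminary result attributed to Rickman \cite{Rickman1, Rickman2} and used without proof. So there is no ``paper's own proof'' to compare against.

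That said, your outline is a faithful sketch of Rickman's original approach via quasimeromorphic value distribution: the spherical counting function and characteristic, the defect relation $\sum_i \delta(a_i) \leq C(d,K)$ obtained through modulus-of-path-family estimates, and the contrapositive argument that an essential singularity together with $\tilde{q_0} = \lfloor C(d,K)\rfloor + 1$ omitted values would violate the defect bound. Your derivation of the ``in particular'' clause by restricting to $A(R,\infty)$ for large $R$ is exactly the intended use and is correct. One small point of phrasing: you write that ``more than $\tilde{q_0}$ omitted values'' forces the defect sum too high, but in fact $\tilde{q_0}$ omitted values already suffice since $\tilde{q_0} > C(d,K)$; this is what the statement actually asserts. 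Also, to close the contrapositive cleanly you need the classification of isolated singularities for quasimeromorphic maps (non-essential implies a limit exists at $\infty$), which is standard but worth flagging as an ingredient.
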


It should be noted that by the above theorem, the exceptional set $E(f) :=\{ x \in \mathbb{R}^d : \mathcal{O}^{-}_{f}(x) \text{ is finite} \}$ has at most $\tilde{q_{0}}$ elements.

For $K$-quasiregular mappings, the quantity $q_{0} := \tilde{q_{0}} -1$ is also referred to as Rickman's constant. This is because infinity is omitted, which is not always the case for $K$-quasimeromorphic mappings. Since the case with finitely many poles reduces down to $K$-quasiregular mappings defined near an essential singularity, we shall mainly use $q_{0}$ and refer to it explicitly as Rickman's quasiregular constant.

Another important theorem is a sufficient condition for when a quasiregular mapping can be extended over isolated points. The following theorem follows from a result first established by Callendar \cite{Callender1}, which was later generalised by Martio, Rickman and V\"{a}is\"{a}l\"{a} \cite{MRV2}.

\begin{thm} \label{thm:removeSing1}
	Let $D \subset \mathbb{R}^d$ be a domain, $E \subset D$ be a finite set of points and $f:D \setminus E \to \mathbb{R}^d$ be a bounded $K$-quasiregular mapping. Then $f$ can be extended to a $K$-quasiregular mapping on all of $D$.
\end{thm}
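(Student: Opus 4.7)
The plan is to handle each singularity in $E$ separately and show that boundedness of $f$ alone forces extendability. Since quasiregularity is local and $E$ is finite, I would reduce at once to the case where $E = \{x_{0}\}$ is a single point, working inside a small ball $B(x_{0}, r) \subset D$. The extension $\tilde{f}$ is then built in two stages: first a continuous extension at $x_{0}$, and second an upgrade to Sobolev regularity.

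For the continuous extension, the idea is to rule out essential-singularity behaviour at $x_{0}$ using boundedness combined with Theorem~\ref{PicardTheorem}. Let $M$ be a M\"{o}bius map sending $x_{0}$ to $\infty$ and set $g := f \circ M^{-1}$. By \eqref{compositionDilatation}, $g$ is a $K'$-quasiregular map on some annular neighbourhood $A(R, \infty)$ of infinity, where $K'$ depends only on $K$ and $M$. If $f$ had an essential singularity at $x_{0}$, then $g$ would have an essential singularity at infinity; but $g$ avoids $\infty$ and is bounded, so Theorem~\ref{PicardTheorem} (applied to $g$ viewed as a quasimeromorphic map) would force $g$ to omit only finitely many finite values near infinity, contradicting boundedness. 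Hence $\lim_{x \to x_{0}} f(x)$ exists in $\mathbb{R}^d$; setting $\tilde{f}(x_{0})$ equal to this limit, and $\tilde{f} = f$ elsewhere, yields a continuous extension across $x_{0}$.

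To finish I would verify that $\tilde{f} \in W^{1,d}_{loc}(B(x_{0}, r))$ and that \eqref{qrDefn1} still holds with the same constant $K$. Since $\tilde{f}$ is bounded and $\{x_{0}\}$ has $d$-capacity zero in $\mathbb{R}^d$, a standard capacity-theoretic argument shows that the weak partial derivatives of $f$ on the punctured ball also serve as weak derivatives of $\tilde{f}$ on the full ball and still lie in $L^{d}_{loc}$. The dilatation inequality \eqref{qrDefn1} then holds almost everywhere on $B(x_{0}, r) \setminus \{x_{0}\}$, hence almost everywhere on $B(x_{0}, r)$, giving $K$-quasiregularity of $\tilde{f}$. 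The principal obstacle is the first stage, namely using boundedness to exclude essential-singularity behaviour; the M\"{o}bius conjugation reduction to Rickman's Picard theorem is the cleanest route I see, though it requires some care to track how the dilatation transforms under conjugation. Once continuous extension is secured, the capacity-based Sobolev step is essentially routine.
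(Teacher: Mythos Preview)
The paper does not give its own proof of Theorem~\ref{thm:removeSing1}; it simply attributes the result to Callender~\cite{Callender1} and Martio--Rickman--V\"{a}is\"{a}l\"{a}~\cite{MRV2}. Those classical arguments proceed via direct capacity/modulus estimates that yield local H\"{o}lder continuity of $f$ up to the isolated point, without invoking anything as deep as Rickman's Picard theorem. Your Stage~1 does work --- boundedness forces $g$ to omit uncountably many values, so Theorem~\ref{PicardTheorem} gives a limit at infinity --- but it is a substantially heavier hammer than the original proofs and, as an aside, M\"{o}bius maps are $1$-quasiconformal, so $K'=K$ exactly and there is nothing to track.

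Stage~2, however, has a genuine gap. The phrase ``standard capacity-theoretic argument'' hides the real issue: the fact that $\{x_{0}\}$ has zero $d$-capacity lets you push the weak-derivative identity across $x_{0}$ \emph{only once you already know} $|Df|\in L^{d}$ on a full neighbourhood of $x_{0}$. You have not established this, and it does not follow from $f\in W^{1,d}_{loc}(B\setminus\{x_{0}\})$ alone, since ``loc'' there refers to compact subsets of the punctured ball. For quasiregular $f$ one obtains it from the dilatation inequality $|Df|^{d}\le K_{O}J_{f}$ together with the change-of-variables identity $\int J_{f}=\int N(y,f,\cdot)\,dy$, after bounding the multiplicity $N$ via the continuous extension $\tilde{f}$ and a Brouwer-degree argument (the degree of $\tilde{f}$ on a small ball about $x_{0}$ is finite and eventually equals $N$). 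That step is not routine and is precisely where the quasiregularity is used a second time. The classical treatments in~\cite{Callender1, MRV2} avoid this two-stage split altogether by using the $K_{O}$-inequality for condenser capacities, which delivers the H\"{o}lder estimate and the Sobolev membership simultaneously; your outline would need either that machinery or the degree argument above to close.
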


	\subsection{Capacity of a condenser}
Let $U \subset \mathbb{R}^d$ be an open set and $C \subset U$ be non-empty and compact. We call the pair $(U,C)$ a condenser and define the (conformal) capacity of $(U,C)$, denoted $\capacity(U,C)$, by
\begin{equation*}
\capacity(U,C) := \inf_{\phi}\int_{U}|\nabla \phi|^d dm,
\end{equation*}
where the infimum is taken over all non-negative functions $\phi \in C_{0}^{\infty}(U)$ satisfying $\phi(x) \geq 1$ for all $x \in C$.

It was shown by Reshetnyak \cite{Reshetnyak1} that if $\capacity(U,C)=0$ for some bounded open set $U \supset C$, then $\capacity(V,C)=0$ for all bounded open sets $V \supset U$. In this case, we say that $C$ has zero capacity and write $\capacity(C)=0$; otherwise we say that $C$ has positive capacity and write $\capacity(C)>0$. If $C \subset \mathbb{R}^d$ is an unbounded closed set, then we say that $\capacity(C)=0$ if $\capacity(C')=0$ for every compact set $C' \subset C$. 

It is known from \cite[Theorem 4.1]{Wa2} that $\capacity(C)=0$ implies $C$ has Hausdorff dimension zero. Also, it is known that if $C$ is a countable set, then $\capacity(C)=0$. Hence, we can informally consider sets of capacity zero as `small' sets. 

For a quasimeromorphic mapping of transcendental type with at least one pole, a strong relationship between points with finite backward orbits and capacity was established in \cite{Warren1}.

\begin{thm}\label{EAndCapacity}
Let $f: \mathbb{R}^d \to \hat{\mathbb{R}}^d$ be a quasimeromorphic mapping of transcendental type with at least one pole. Then $x \in E(f)$ if and only if $\capacity\left(\overline{\mathcal{O}_{f}^{-}(x)}\right)=0$. 
\end{thm}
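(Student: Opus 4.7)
The forward implication is essentially immediate: if $x\in E(f)$ then $\mathcal{O}_f^{-}(x)$ is finite by definition, so its closure is finite and hence has capacity zero, as remarked in the text just before the theorem.

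The substantive content is the converse, which I would prove by contrapositive. Assume $x\notin E(f)$, and set $C:=\overline{\mathcal{O}_f^{-}(x)}$; the aim is to show $\capacity(C)>0$. Because $x\notin E(f)$, Rickman's theorem (Theorem~\ref{PicardTheorem}) guarantees that $x$ has preimages of arbitrarily large modulus. Moreover, since the exceptional set has at most $\tilde q_0$ elements and is backward invariant, all but finitely many preimages of $x$ are themselves non-exceptional, and Rickman's theorem applies again at each level. Iterating this observation yields a tree of preimages inside $C$ whose depth grows without bound and whose nodes accumulate at infinity.

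To upgrade this qualitative abundance into a positive capacity estimate, I would build a Cantor-type compact subset $K\subseteq C$ as follows. Each non-exceptional preimage $y$ of $x$ is a point of discreteness and openness for $f$, so there is a small neighbourhood $U_y$ on which $f|_{U_y}$ is a branched covering of a fixed topological ball about $x$ with dilatation at most $K(f)$. Within each such $U_y$, a further application of Theorem~\ref{PicardTheorem} supplies many second-generation preimages of $x$, each enclosed by its own quasiball mapping onto the fixed ball about $x$ by a \emph{single} branch of $f$ rather than by an iterate $f^n$. Taking nested intersections of the quasiballs constructed in this way produces the desired compact set $K$.

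The main obstacle is the final capacity estimate: one needs a lower bound on $\capacity(K)$. This is where arranging the construction branch-by-branch is crucial, since capacity is only quasi-invariant under quasiregular maps and any uncontrolled accumulation of dilatation through $f^n$ would be fatal; by using one fresh branch of $f$ per level the relevant dilatation at every branching step stays bounded by $K(f)$, and the composition bound \eqref{compositionDilatation} never needs to be invoked cumulatively. With this uniform control in hand, one can invoke the standard modulus-of-curve-family estimates, together with the quasi-invariance of moduli under $K$-quasiregular maps, to conclude that a Cantor set obtained by iterated $K$-quasiconformal branchings has positive capacity. This gives $\capacity(C)\geq\capacity(K)>0$, establishing the contrapositive and completing the proof.
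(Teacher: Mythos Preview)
The paper does not prove Theorem~\ref{EAndCapacity}; it is quoted from \cite{Warren1} without argument, so there is no proof in the present paper to compare your attempt against.

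On its own terms, your sketch has a structural gap in the converse direction. The forward implication is fine. For the contrapositive, however, the Cantor tree does not nest as you describe it. You fix a ball $B_0$ about $x$ and, for each first-generation preimage $y$, a small neighbourhood $U_y$ with $f(U_y)=B_0$. You then assert that Theorem~\ref{PicardTheorem} supplies second-generation preimages of $x$ \emph{inside} $U_y$. But a point $z\in U_y$ with $f^2(z)=x$ must have $f(z)\in f^{-1}(x)\cap B_0$, and nothing forces $x$ to have any preimages in the small ball $B_0$; Rickman's theorem places preimages of $x$ near infinity, not near $x$, and it says nothing about the behaviour of $f$ on the bounded set $U_y$. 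Thus successive levels of your tree need not be nested, and no Cantor set results. To obtain a genuinely nested family one must work harder---for instance by arranging, in the spirit of Lemma~\ref{JuliaSetIntersectionWithoutPits} or Lemma~\ref{EExists}, that some fixed bounded domain contains several disjoint subdomains each mapped onto it by $f$ or by a fixed iterate---and it is precisely this step that carries the substance of the result.

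Even granting a correct nested construction, the final sentence is an assertion rather than an argument: passing from ``each branching step has dilatation at most $K(f)$'' to ``the limit set has positive capacity'' requires uniform geometric control (diameters and separations of children relative to parents) at every level, and you have not explained how the single-step bound delivers this.
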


\subsection{Julia set of quasimeromorphic mappings}

The following theorem due to Miniowitz \cite{Miniowitz1} is an extension of Montel's theorem to the quasimeromorphic setting. Here, we denote the chordal distance between two points $x_{1}, x_{2} \in \hat{\mathbb{R}}^d$ by $\chi(x_{1},x_{2})$.

\begin{lem} \label{qmMontel}
Let $\mathcal{F}$ be a family of $K$-quasimeromorphic mappings on a domain $X \subset \mathbb{R}^d, d\geq 2$, and let $\tilde{q_{0}} = \tilde{q_{0}}(d,K)$ be Rickman's constant. 

Suppose that there exists some $\epsilon >0$ such that each $f \in \mathcal{F}$ omits $\tilde{q_{0}}$ values $a_{1}(f), a_{2}(f), \dots, a_{\tilde{q_{0}}}(f) \in \hat{\mathbb{R}}^d$ with $\chi(a_{i}(f), a_{j}(f)) \geq \epsilon$ for all $i \neq j$. Then $\mathcal{F}$ is a normal family on $X$.
\end{lem}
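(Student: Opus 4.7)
The plan is to argue by contradiction using a Zalcman-style rescaling together with Rickman's Picard theorem (Theorem~\ref{PicardTheorem}) and the removable singularity result (Theorem~\ref{thm:removeSing1}). Suppose $\mathcal{F}$ is not normal at some point $x_0 \in X$. By a standard characterization of non-normality in terms of the spherical derivative (or, equivalently, failure of equicontinuity in the chordal metric), one can select a sequence $f_n \in \mathcal{F}$, points $x_n \to x_0$, and radii $\rho_n \searrow 0$ such that the rescalings
\begin{equation*}
g_n(y) := f_n(x_n + \rho_n y)
\end{equation*}
are defined on larger and larger balls around the origin. The key analytic ingredient is a quasimeromorphic Zalcman lemma, which I would prove by choosing the $\rho_n$ and $x_n$ so that the spherical derivative of $g_n$ at $0$ is normalised and bounded on compact subsets of $\mathbb{R}^d$, and then applying the compactness results for $K$-quasimeromorphic maps (Arzelà-Ascoli in the chordal metric, together with the fact that locally uniformly chordal limits of $K$-quasimeromorphic maps are either constant $\infty$ or $K$-quasimeromorphic). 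The output is a non-constant $K$-quasimeromorphic limit $g: \mathbb{R}^d \to \hat{\mathbb{R}}^d$.

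Next, since $\hat{\mathbb{R}}^d$ is compact, after passing to a subsequence the omitted values $a_i(f_n)$ converge to distinct limits $a_1,\dots,a_{\tilde q_0} \in \hat{\mathbb{R}}^d$ with $\chi(a_i,a_j) \geq \epsilon$. I would then argue a Hurwitz-type statement: the limit $g$ either attains no value $a_i$ on $\mathbb{R}^d$ or attains $a_i$ identically (which is impossible as $g$ is non-constant and quasimeromorphic maps are discrete and open by Reshetnyak's theorem). The standard route is to suppose $g(y_0)=a_i$ for some $y_0$; then by openness $g$ maps a neighbourhood of $y_0$ onto an open neighbourhood $V$ of $a_i$ in the chordal metric, and locally uniform convergence plus the fact that each $g_n$ omits $a_i(f_n)$ (with $a_i(f_n)\to a_i \in V$) produces a contradiction via a winding/index argument or directly from continuity.

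Hence $g:\mathbb{R}^d \to \hat{\mathbb{R}}^d$ is a non-constant $K$-quasimeromorphic mapping that omits $\tilde q_0$ distinct values. Applying Theorem~\ref{PicardTheorem} to $g$ on $A(1,\infty)$, we conclude that $g$ has a limit $L \in \hat{\mathbb{R}}^d$ at infinity. Post-composing with a sense-preserving Möbius map sending $L$ to $0$ if necessary, we may assume $g$ is bounded in a neighbourhood of infinity; then $\tilde g(x) := g(x/|x|^2)$ is a bounded $K$-quasiregular map defined near $0$ except at $0$ itself, which by Theorem~\ref{thm:removeSing1} extends quasiregularly across the puncture. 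This provides a $K$-quasimeromorphic extension of $g$ to all of $\hat{\mathbb{R}}^d$. But such a mapping is a branched covering of $\hat{\mathbb{R}}^d$ and is therefore surjective, contradicting the omission of the $\tilde q_0 \geq 2$ values $a_i$.

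The main obstacle is step one, the quasimeromorphic Zalcman lemma: the classical Zalcman argument relies on the spherical derivative being meaningful and well-behaved, and in the quasimeromorphic setting one must replace the usual spherical derivative with a suitable quasiregular substitute and verify that the normalised rescalings have uniformly controlled dilatation $K$ and converge chordally to a non-constant $K$-quasimeromorphic map rather than collapsing. The rest of the argument is essentially packaging Rickman's Picard theorem, removable singularities, and the surjectivity of non-constant quasimeromorphic maps on $\hat{\mathbb{R}}^d$.
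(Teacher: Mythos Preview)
The paper does not prove this lemma; it is simply quoted as a result of Miniowitz and cited from \cite{Miniowitz1}, so there is no proof in the paper to compare against. Your proposed argument is in fact essentially Miniowitz's own strategy: the core of her paper is precisely the quasimeromorphic Zalcman-type rescaling lemma that you correctly identify as the main obstacle, and she then deduces the Montel-type normality criterion from it together with Rickman's big Picard theorem and a Hurwitz-type persistence argument, just as you outline. So your sketch is correct in outline and, once the Zalcman step is carried out rigorously (replacing the classical spherical derivative by a suitable linear-dilatation quantity for quasiregular maps and invoking the appropriate equicontinuity and compactness results for $K$-quasimeromorphic families), it reproduces the cited theorem. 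One minor remark on your final step: once the non-constant limit $g:\mathbb{R}^d\to\hat{\mathbb{R}}^d$ is known to omit $\tilde q_0$ values and to extend quasimeromorphically over $\infty$, surjectivity follows immediately from openness together with compactness of $\hat{\mathbb{R}}^d$, so the contradiction only needs that at least one value is omitted.
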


For a general $K$-quasimeromorphic mapping $f$, the dilatation of the iterates $f^k$ can grow exponentially large. As a result, the above theorem cannot be applied to the family of iterates to study the Julia set in this case. Nonetheless, it can be applied to a re-scaled family of mappings $\left\lbrace f(rx)/s : r,s \in \mathbb{R} \right\rbrace$, since all members of this family have the same dilatation $K$.

By defining the Julia set directly using the expansion property in \eqref{JuliaSetDefn}, it has been possible to study analogues of the Fatou-Julia theory in the new setting. Recently, the Julia set for quasimeromorphic mappings of transcendental type with at least one pole has been successfully established in \cite{Warren1}; here, it was shown that many of the usual properties of the Julia set analogously hold as well. These are summarised below.

\begin{thm}\label{JuliaSetProperties}
Let $f:\mathbb{R}^d \to \hat{\mathbb{R}}^d$ be a quasimeromorphic mapping of transcendental type with at least one pole. Then the following hold.

\begin{enumerate}
\item[\emph{(i)}] $J(f) \neq \varnothing$. In fact, $\card(J(f)) = \infty$.
\item[\emph{(ii)}] $J(f)$ is perfect.
\item[\emph{(iii)}] $x \in J(f) \setminus \{ \infty \}$ if and only if $f(x) \in J(f)$. In particular, $J(f) \setminus \mathcal{O}^{-}_{f}(\infty)$ is completely invariant.
\item[\emph{(iv)}] $J(f) \subset \overline{\mathcal{O}^{-}_{f}(x)}$ for every $x \in \hat{\mathbb{R}}^d \setminus E(f)$.
\item[\emph{(v)}] $J(f) = \overline{\mathcal{O}^{-}_{f}(x)}$ for every $x \in J(f) \setminus E(f)$.
\item[\emph{(vi)}] Let $U \subset \hat{\mathbb{R}}^d$ be an open set such that $U \cap J(f) \neq \varnothing$. Then for all $x \in \hat{\mathbb{R}}^d \setminus E(f)$, there exists some $w \in U$ and some $k \in \mathbb{N}$ such that $f^k(w)=x$.
\item[\emph{(vii)}] For each $n \in \mathbb{N}$,
\begin{align*}
J(f)= \{ &x \in \hat{\mathbb{R}}^d \setminus \overline{\mathcal{O}^{-}_{f}(\infty)} : \card(\hat{\mathbb{R}}^d \setminus \mathcal{O}^{+}_{f^n}(U_{x}))< \infty \text{ for all} \nonumber \\ 
&\text{neighbourhoods } U_{x} \subset \hat{\mathbb{R}}^d \setminus \overline{\mathcal{O}^{-}_{f}(\infty)} \text{ of } x \} \cup \overline{\mathcal{O}^{-}_{f}(\infty)}.
\end{align*}
\end{enumerate}
\end{thm}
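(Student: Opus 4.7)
The plan is to mirror the classical Fatou--Julia theory for transcendental meromorphic functions, using the three quasi-meromorphic tools available: Rickman's Picard theorem (Theorem~\ref{PicardTheorem}), the Miniowitz normal-family criterion (Lemma~\ref{qmMontel}) applied to rescaled iterate families of the form $\{f(rx)/s\}$, and the capacity characterisation of $E(f)$ (Theorem~\ref{EAndCapacity}). I would prove the seven parts roughly in the order (i), (iii), (iv), (ii), (v), (vi), (vii), since each step leans on the previous ones.

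For (i), one shows $\card(J(f)) = \infty$ by combining the inclusion $\overline{\mathcal{O}^-_f(\infty)} \subset J(f)$ with a normal-family argument: if $J(f)$ were finite, then on the complement of a slight enlargement of $J(f)$ a suitable rescaled iterate family would omit enough values to apply Lemma~\ref{qmMontel}, contradicting the expansion property at a Julia point. For (iii), the forward direction is continuity-based: if $V$ is a neighbourhood of $f(x)$ avoiding $\overline{\mathcal{O}^-_f(\infty)}$, then $U := f^{-1}(V)$ is an open neighbourhood of $x$ with $\mathcal{O}^+_f(U) \subset V \cup \mathcal{O}^+_f(V)$, and the expansion property transfers to $V$. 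The backward direction uses openness of non-constant quasimeromorphic maps (Reshetnyak) via $\mathcal{O}^+_f(f(U)) \subset \mathcal{O}^+_f(U)$. Part (iv) is then immediate: for any neighbourhood $U$ of $y \in J(f)$, $\hat{\mathbb{R}}^d \setminus \mathcal{O}^+_f(U)$ is finite, so any $x \notin E(f)$ must have infinitely many preimages in $U$, giving $y \in \overline{\mathcal{O}^-_f(x)}$.

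Perfectness (ii) now follows cleanly: since $E(f)$ is finite (Theorem~\ref{PicardTheorem}) but $J(f)$ is infinite, pick $y \in J(f) \setminus E(f)$; iterating the backward direction of (iii) places $\mathcal{O}^-_f(y) \subset J(f)$, while (iv) gives $J(f) \subset \overline{\mathcal{O}^-_f(y)}$, so every Julia point is a limit of distinct Julia points. Part (v) combines the same two ingredients for $x \in J(f) \setminus E(f)$, and (vi) is a direct rephrasing of (iv).

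The most delicate part is (vii). The obvious inclusion $\mathcal{O}^+_{f^n}(U) \subset \mathcal{O}^+_f(U)$ goes the wrong way, so the original expansion property for $f$ does not directly yield the one for $f^n$. My approach would exploit perfectness of $J(f)$: choose $n$ distinct points $y_0, \ldots, y_{n-1}$ in $U \cap J(f) \setminus E(f)$ with pairwise disjoint sub-neighbourhoods $V_0, \ldots, V_{n-1} \subset U$, apply the expansion property at each $y_i$, and combine the resulting coverings, using an auxiliary application of (vi) to adjust the iterate count modulo $n$, to show that every $z \notin E(f)$ lies in $f^{mn}(V_i) \subset \mathcal{O}^+_{f^n}(U)$ for some $m$ and $i$. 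Making this modular argument rigorous, while carefully tracking the finite set of points possibly missed and the edge cases when some $y_i$ lies near $\overline{\mathcal{O}^-_f(\infty)}$, will be the main technical obstacle.
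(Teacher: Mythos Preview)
The paper does not prove this theorem. Theorem~\ref{JuliaSetProperties} is stated in Section~2.3 as a summary of results established elsewhere: the preceding sentence reads ``Recently, the Julia set for quasimeromorphic mappings of transcendental type with at least one pole has been successfully established in \cite{Warren1}; here, it was shown that many of the usual properties of the Julia set analogously hold as well. These are summarised below.'' No proof or sketch appears in the present paper; the result is imported wholesale from \cite{Warren1} and used as a black box throughout.

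Consequently there is no in-paper argument to compare your proposal against. Your outline is a plausible route in the spirit of classical Fatou--Julia theory, and the tools you invoke (Rickman's Picard theorem, Miniowitz's normal-family criterion, the capacity characterisation of $E(f)$) are indeed the ones available in this setting. If you want to check your approach against an actual proof, you would need to consult \cite{Warren1} directly rather than this paper.
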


We remark that the Julia set definition in \eqref{JuliaSetDefn} is different to the Julia set definition used for quasiregular mappings of transcendental type, which were defined by Bergweiler and Nicks in \cite{BN1}. For those mappings, the cardinality condition is replaced by a weaker condition using conformal capacity. Although these conditions are equivalent for quasimeromorphic mappings of transcendental type with at least one pole by Theorem~\ref{EAndCapacity}, it remains an open conjecture whether this result can be extended to quasiregular mappings of transcendental type; see \cite{BN1}. For this reason, we include the extra condition that each quasimeromorphic mapping has at least one pole in the statement of the theorems within this paper.

\subsection{Brouwer degree and covering lemmas}

Let $f: G \to \mathbb{R}^d$ be a quasiregular mapping, $D \subset G$ be an open set with $\overline{D} \subset G$ compact, and let $y \in \mathbb{R}^d \setminus f(\partial D)$. Firstly, for $x \in G$, we define the local (topological) index of $f$ at $x$, denoted by $i(x,f)$, as
\begin{equation*}
i(x,f) := \inf\{\sup\{\card(f^{-1}(w) \cap U_{x}) : w \in \mathbb{R}^d \}\},
\end{equation*}
where the infimum is taken over all the neighbourhoods $U_{x} \subset G$ of $x$. 

From here we define the Brouwer degree of $f$ at $y$ over $D$, denoted $\mu(y,f,D)$, as

\begin{equation} \label{muDefn}
	\mu(y,f,D) = \sum_{x \in f^{-1}(y) \cap D} i(x,f),
	\end{equation}
which informally counts the number of preimages of $y$ in $D$ including multiplicity.

For quasiregular mappings, the Brouwer degree has many useful properties which will be summarised below without proof (See \cite[Section~II.2.3]{RR} and \cite[Proposition~I.4.4]{Rickman3}).

\begin{thm} \label{DegreeProperties} 
	Let $f: G \to \mathbb{R}^d$ be a quasiregular mapping and let $D \subset \mathbb{R}^d$ be an open bounded set with $\overline{D} \subset G$. Then the following hold: 
	\begin{enumerate}
	\item[\emph{(i)}] 
	 if $x, y \not\in f(\partial D)$ are in the same connected component of $\mathbb{R}^d \setminus f(\partial D)$, then $\mu(x,f,D) = \mu(y,f,D)$.
	\item[\emph{(ii)}] 
	if $y \not\in f(\partial D)$, $X_{1}, X_{2}, \dots, X_{n}$ are disjoint sets and if $D \cap f^{-1}(y) \subset \bigcup_{i} X_{i} \subset D$, then 
	\begin{equation*}
	\mu(y,f,D) = \sum_{i=1}^{n} \mu(y,f,X_{i}) \text{ (if defined)}.
	\end{equation*}
	\item[\emph{(iii)}] 
	 if $y \not\in f(\partial D)$ and $g: H \to \mathbb{R}^d$ is a quasiregular mapping with $\overline{D} \subset H$ such that $\max\{|f(x)-g(x)| : x \in \partial D \} < \min \{|f(x)-y| : x \in \partial D\}$, then $\mu(y,f,D) = \mu(y,g,D)$.
	\item[\emph{(iv)}] 
	if $\alpha, \beta >0$ and $\alpha y \not\in f(\partial D)$,  then 
	\begin{equation*}
	\mu(\alpha y,f,D)= \mu(y, F,D'),
	\end{equation*} 
	where $D' = (1/\beta)D$ and $F: \Omega \to \mathbb{R}^d$ is a quasiregular mapping with $\Omega \supset \overline{D'}$, defined by $F(x) = (1/\alpha)f(\beta x)$.
	\end{enumerate}
\end{thm}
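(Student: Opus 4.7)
The plan is to derive each of the four parts from classical topological-degree theory; indeed, the theorem is stated in the paper as a summary of known results, and the following sketches the standard arguments along the lines of \cite[Section~II.2.3]{RR} and \cite[Proposition~I.4.4]{Rickman3}. For part (i), I would show that the map $z \mapsto \mu(z, f, D)$ is locally constant on $\mathbb{R}^d \setminus f(\partial D)$; since this map is integer-valued, local constancy forces constancy on each connected component. Local constancy follows from \eqref{muDefn} together with the openness and discreteness of quasiregular maps: for $z$ near a fixed $z_0 \notin f(\partial D)$, each preimage of $z$ in $D$ lies close to some preimage of $z_0$, and the local indices $i(\cdot, f)$ are preserved along these branches.

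Part (ii) is essentially immediate from \eqref{muDefn}: the finite set $D \cap f^{-1}(y)$ is contained in the disjoint union $\bigcup_i X_i \subset D$, and, provided each $\mu(y, f, X_i)$ is defined (i.e., $y \notin f(\partial X_i)$), splitting the index sum according to which $X_i$ contains each preimage gives the claimed additivity. For part (iii), I would employ the straight-line homotopy $h_t := (1-t) f + t g$ for $t \in [0,1]$. For $x \in \partial D$, the triangle inequality together with the hypothesis yields $|h_t(x) - y| \geq |f(x) - y| - t|g(x) - f(x)| > 0$, so $y \notin h_t(\partial D)$ for every $t \in [0,1]$. Although the intermediate $h_t$ need not be quasiregular, the Brouwer degree is a topological invariant of continuous maps and is preserved along this homotopy, which gives $\mu(y, f, D) = \mu(y, g, D)$.

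For part (iv), I would use a change of variables. Writing $F(x) = (1/\alpha) f(\beta x)$ and $D' = (1/\beta) D$, the scaling $x \mapsto \beta x$ is a homeomorphism sending $D'$ onto $D$ and $F^{-1}(y) \cap D'$ onto $f^{-1}(\alpha y) \cap D$, with $i(x, F) = i(\beta x, f)$ because the local index is a topological invariant under pre-composition with homeomorphisms. Summing these indices via \eqref{muDefn} yields the claimed identity. The main subtlety lies in part (iii): one must appeal to the homotopy invariance of the degree for continuous (not merely quasiregular) maps, since the intermediate $h_t$ need not be quasiregular; this is legitimate because the Brouwer degree is defined purely topologically and agrees with the index-sum formula \eqref{muDefn} at the quasiregular endpoints $f$ and $g$. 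Beyond this point the argument amounts to bookkeeping built on the existence and local constancy of the degree.
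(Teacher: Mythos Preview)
Your proposal is appropriate: the paper does not prove this theorem at all, but states it explicitly ``without proof'' with references to \cite[Section~II.2.3]{RR} and \cite[Proposition~I.4.4]{Rickman3}, and your sketches correctly follow the standard topological-degree arguments from those sources. In particular, you rightly flag that (iii) requires the homotopy invariance of the Brouwer degree for merely continuous maps, since the interpolants $h_t$ need not be quasiregular.
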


The following covering lemma is an extension of \cite[Lemma~3.1]{Sixsmith2} to the quasimeromorphic setting.

\begin{lem} \label{NestedSeqn}
	Let $f:\mathbb{R}^d \to \hat{\mathbb{R}}^d$ be a continuous function. For $n \geq 0$, let $(F_{n})$ be a sequence of non-empty bounded sets in $\mathbb{R}^d$, $(\ell_{n+1})$ be a sequence of natural numbers and $G_{n} \subset F_{n}$ be a sequence of non-empty subsets such that $f^{\ell_{n+1}}$ is continuous on $\overline{G_{n}}$ with
	\begin{equation}\label{Containment}
	f^{\ell_{n+1}}(G_{n}) \supset F_{n+1}.
	\end{equation} 
	For $n \in \mathbb{N}$, set $r_{n} = \sum_{i=1}^{n} \ell_{i}$. Then there exists $\zeta \in \overline{F_{0}}$ such that $f^{r_{n}}(\zeta) \in \overline{F_{n}}$ for each $n \in \mathbb{N}$.
	
	Further, suppose that $f:\mathbb{R}^d \to \hat{\mathbb{R}}^d$ is a quasimeromorphic mapping of transcendental type with at least one pole such that for $n \geq 0$, $f^{\ell_{n+1}}$ is quasimeromorphic on $\overline{G_{n}}$ and \eqref{Containment} holds. If there is a subsequence $(F_{n_{k}})$ such that $\overline{F_{n_{k}}} \cap J(f) \neq \varnothing$ for all $k \in \mathbb{N}$, then $\zeta$ can be chosen to be in $J(f) \cap \overline{F_{0}}$.
	\end{lem}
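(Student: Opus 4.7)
The plan is a Cantor-style nested-compacts argument. For each $N \geq 1$, define
\begin{equation*}
K_N = \{ x \in \overline{G_0} : f^{r_k}(x) \in \overline{G_k} \text{ for } 1 \leq k \leq N-1 \text{ and } f^{r_N}(x) \in \overline{F_N} \}.
\end{equation*}
I will show each $K_N$ is a non-empty compact subset of $\overline{F_0}$ and that $K_{N+1} \subset K_N$, the latter being immediate from $\overline{G_N} \subset \overline{F_N}$. Since the $K_N$ lie in the bounded set $\overline{F_0}$ and are closed (continuity of each $f^{\ell_j}$ on $\overline{G_{j-1}}$, unwound by composition, makes the defining conditions closed), the finite intersection property then produces $\zeta \in \bigcap_N K_N$, which is the desired point.

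For non-emptiness of $K_N$ I will run a backward induction. The key preliminary is that, since $\overline{G_n}$ is compact and $f^{\ell_{n+1}}$ continuous on it, $f^{\ell_{n+1}}(\overline{G_n})$ is closed in $\hat{\mathbb{R}}^d$; combined with $f^{\ell_{n+1}}(G_n) \supset F_{n+1}$ this upgrades to $f^{\ell_{n+1}}(\overline{G_n}) \supset \overline{F_{n+1}}$. Setting $H_N := \overline{F_N}$ and $H_n := \overline{G_n} \cap (f^{\ell_{n+1}})^{-1}(H_{n+1})$ for $n = N-1, \ldots, 0$, each $H_n$ is a non-empty compact subset of $\overline{G_n}$, and any $x \in H_0$ lies in $K_N$.

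For the Julia-set refinement I repeat this construction but begin the backward induction at each $n_k$ from $\widetilde{H}_{n_k} := \overline{F_{n_k}} \cap J(f)$, which is non-empty compact by hypothesis. This yields non-empty compact sets $\widetilde{K}_{n_k} \subset \overline{G_0}$ with the endpoint constraint strengthened to $f^{r_{n_k}}(x) \in J(f) \cap \overline{F_{n_k}}$. Since $f^{r_{n_k}}(x) \in \mathbb{R}^d$, the backward invariance of $J(f)$ provided by Theorem~\ref{JuliaSetProperties}(iii), iterated $r_{n_k}$ times, places every such $x$ in $J(f)$; the same invariance applied at intermediate indices shows $\widetilde{K}_{n_\ell} \subset \widetilde{K}_{n_k}$ whenever $n_k < n_\ell$, so a point of $\bigcap_k \widetilde{K}_{n_k}$ furnishes the required $\zeta \in J(f) \cap \overline{F_0}$, and the subsequence condition together with $\overline{G_n} \subset \overline{F_n}$ ensures $f^{r_n}(\zeta) \in \overline{F_n}$ for every $n$.

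The main obstacle I anticipate is the closed-image step $f^{\ell_{n+1}}(\overline{G_n}) \supset \overline{F_{n+1}}$ underpinning the backward induction, since the hypothesis gives containment only on the not-necessarily-closed set $G_n$; one closes the gap via compactness of $\overline{G_n}$ and continuity of $f^{\ell_{n+1}}$ as a map into $\hat{\mathbb{R}}^d$. In the Julia-set part, the other delicate point is verifying the nesting $\widetilde{K}_{n_\ell} \subset \widetilde{K}_{n_k}$, which genuinely requires backward invariance of $J(f)$ under iteration and the fact that the intermediate iterates $f^{r_{n_k}}(x)$ remain in $\mathbb{R}^d$.
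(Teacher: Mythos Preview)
Your proof is correct and follows essentially the same nested-compacts argument as the paper, including the key closed-image step $f^{\ell_{n+1}}(\overline{G_n}) \supset \overline{F_{n+1}}$ obtained from compactness and continuity. For the Julia-set refinement you work directly along the subsequence $(n_k)$ to build the nested $\widetilde{K}_{n_k}$, whereas the paper first uses backward invariance to deduce $\overline{G_n}\cap J(f)\neq\varnothing$ for \emph{every} $n$ and then simply reapplies the first part to the replacement sets $\overline{F_n}\cap J(f)$ and $\overline{G_n}\cap J(f)$; this is a minor repackaging of the same idea.
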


\begin{proof}
For all $n \geq 0$, $f^{\ell_{n+1}}$ is continuous on $\overline{G_{n}}$ and $\overline{G_{n}}$ is compact, so \eqref{Containment} implies that $f^{\ell_{n+1}}(\overline{G_{n}}) \supset \overline{F_{n+1}}$ for all $n \geq 0$. Now define the sets 
\begin{equation*}
T_{N} = \{ x \in \overline{G_{0}} : f^{r_{n}}(x) \in \overline{G_{n}} \text{ for all }  n \leq N\}.
\end{equation*}

The sets $T_{N}$ are non-empty, compact and form a decreasing nested sequence. Thus $T:= \bigcap_{N=1}^{\infty}T_{N}$ is non-empty and any $\zeta \in T$ is such that $f^{r_{n}}(\zeta) \in \overline{F_{n}}$ for all $n \in \mathbb{N}$.

Now suppose that $f$ is a quasimeromorphic mapping of transcendental type with at least one pole and for $n \geq 0$, $f^{\ell_{n+1}}$ is quasimeromorphic on $\overline{G_{n}}$ and \eqref{Containment} holds. Since $J(f)$ is backward invariant, we get that $\overline{G_{n}} \cap J(f) \neq \varnothing$ for all $n \geq 0$. It follows that $f^{\ell_{n+1}}(\overline{G_{n}} \cap J(f)) \supset \overline{F_{n+1}} \cap J(f)$ for all $n \geq 0$. 

By applying the first part of the lemma to the closed sets $\overline{F_{n}} \cap J(f)$, then $\zeta \in J(f) \cap \overline{F_{0}}$ as required. 
\end{proof}

It should be noted that by setting $\ell_{n} = 1$ for all $n \in \mathbb{N}$, we get a modified version of \cite[Lemma~1]{RS5}. This version shall be used for the proof of Theorem~\ref{MainTheorem}, while the general version shall be reserved for the proof of Theorem~\ref{BoundaryTheorem}.

\subsection{Holding-up lemma}

For a quasimeromorphic mapping with finitely many poles, it is possible to get sufficient conditions for the existence of a slow escaping point using the same `holding-up' technique as that for quasiregular mappings of transcendental type. The proof of the following lemma is similar to that by Nicks \cite[Lemma~3.1]{Nicks1} and is therefore omitted. 

\begin{lem} \label{lem:HoldUp}
	Let $f:\mathbb{R}^d \to \hat{\mathbb{R}}^d$ be a $K$-quasimeromorphic function of transcendental type with at least one pole. Let $p \in \mathbb{N}$ and, for $m \in \mathbb{N}$ and $i \in \{1,2,\dots, p\}$, let $X_{m}^{(i)} \subset \mathbb{R}^d$ be non-empty bounded sets, with $X_{m} = \bigcup_{i=1}^{p} X_{m}^{(i)}$, such that 
	\begin{equation} \label{SmallestXToInfty}
	\inf\{|x| : x \in X_{m} \} \to \infty \text{ as } m \to \infty.
	\end{equation}
	
	Suppose further that
	\begin{enumerate}
	\item [\emph{(X1)}] for all $m \in \mathbb{N}$ and $i \in \{1,2,\dots, p\}$, there exists some $j \in \{1,2,\dots, p\}$ such that $f\left(X_{m}^{(i)} \right) \supset X_{m+1}^{(j)}$,
	\end{enumerate}
	and there exists a strictly increasing sequence of integers $(m_{t})$ such that 
	\begin{enumerate}
	\item [\emph{(X2)}] for all $t \in \mathbb{N}$ and $i \in \{1,2,\dots, p\}$, there exists some $j \in \{1,2,\dots, p\}$ such that $f\left(X_{m_{t}}^{(i)} \right) \supset X_{m_{t}}^{(j)}$, and 
	\item [\emph{(X3)}] for all $t \in \mathbb{N}$ and $i \in \{1,2,\dots, p\}$, $\overline{X_{m_{t}}^{(i)}} \cap J(f) \neq \varnothing$.
	\end{enumerate}
	
	Then given any positive sequence $a_n \to \infty$, there exists $\zeta \in J(f)$ and $N_{1} \in \mathbb{N}$ such that $|f^n(\zeta)| \to \infty$ as $n \to \infty$, while also $|f^n(\zeta)| \leq a_n$ whenever $n \geq N_{1}$.
\end{lem}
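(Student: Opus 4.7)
The plan is to apply Lemma~\ref{NestedSeqn} with $\ell_n = 1$ and $G_n = F_n$ throughout, by constructing non-empty bounded sets $F_n = X_{k_n}^{(i_n)}$ with $f(F_n) \supset F_{n+1}$ for every $n \geq 0$. The indices will be chosen so that $k_n \to \infty$ (which forces $|f^n(\zeta)| \to \infty$ via~\eqref{SmallestXToInfty}), so that $R_{k_n} := \sup\{|x| : x \in X_{k_n}\} \leq a_n$ for all $n$ past some threshold $N_1$ (which delivers the slow escape bound), so that (X1) or (X2) supplies the required $i_{n+1}$ at each step, and so that along a subsequence $(s_t) \to \infty$ one has $k_{s_t} = m_t$; in this last case (X3) yields $\overline{F_{s_t}} \cap J(f) \neq \varnothing$, and the second part of Lemma~\ref{NestedSeqn} places $\zeta$ in $J(f)$.

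For the construction, let $M_t := \sup\{R_m : m_t \leq m \leq m_{t+1}\}$, which is finite since each $X_m$ is a finite union of bounded sets; using $a_n \to \infty$, pick $N_t \in \mathbb{N}$ so that $a_n \geq M_t$ whenever $n \geq N_t$. Define hold-up times $(s_t)_{t \geq 1}$ inductively with $s_1 := 0$ and $s_{t+1} \geq \max\bigl(s_t + (m_{t+1} - m_t),\, N_{t+1}\bigr)$. Set $k_n = m_t$ on the hold phase $s_t \leq n \leq s_{t+1} - (m_{t+1} - m_t)$, and let $k_n$ increase by $1$ at each step of the subsequent advance phase up to $k_{s_{t+1}} = m_{t+1}$. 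Starting from any $i_0 \in \{1, \dots, p\}$, define $i_{n+1}$ inductively from $i_n$ by applying (X2) when $k_{n+1} = k_n$ and (X1) when $k_{n+1} = k_n + 1$; in either case the appropriate condition supplies an index producing the required containment.

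Applying Lemma~\ref{NestedSeqn} along the subsequence $(s_t)$, where the sets $F_{s_t} = X_{m_t}^{(i_{s_t})}$ satisfy $\overline{F_{s_t}} \cap J(f) \neq \varnothing$ by (X3), produces $\zeta \in J(f) \cap \overline{F_0}$ with $f^n(\zeta) \in \overline{F_n} \subset \overline{X_{k_n}}$ for every $n \geq 0$. For $n \geq N_1$, the step $n$ lies in some $[s_t, s_{t+1}]$ with $k_n \in [m_t, m_{t+1}]$, giving $R_{k_n} \leq M_t$: either $t = 1$ (with $M_1 \leq a_n$ directly, by $n \geq N_1$) or $t \geq 2$ (in which case $n \geq s_t \geq N_t$ also gives $M_t \leq a_n$). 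Hence $|f^n(\zeta)| \leq R_{k_n} \leq a_n$, while $k_n \to \infty$ together with~\eqref{SmallestXToInfty} forces $|f^n(\zeta)| \to \infty$, as required.

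The main technical difficulty lies in balancing the hold-up times so that no advance step ever forces $R_{k_n}$ above $a_n$; this is exactly why $s_{t+1}$ is chosen to dominate both $s_t + (m_{t+1} - m_t)$ (providing enough room for the preceding advance to complete) and $N_{t+1}$ (guaranteeing $a_n \geq M_t$ throughout $[s_t, s_{t+1}]$). Everything else is a routine verification.
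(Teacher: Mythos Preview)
Your argument is correct and follows the same holding-up strategy that the paper defers to \cite[Lemma~3.1]{Nicks1} and spells out for the variant Lemma~\ref{lem:HoldUp2}: alternate (X2)-hold phases at the levels $m_t$ with (X1)-advance phases, calibrate the phase lengths against thresholds $N_t$ chosen from $a_n\to\infty$, and apply Lemma~\ref{NestedSeqn} with the Julia-set intersections supplied by (X3). Your choice to index the sequence from $n=0$ rather than from $n=N_1$ is a mild simplification over the proof of Lemma~\ref{lem:HoldUp2}, since it sidesteps the Picard-theorem pullback used there to produce the first $N_1$ preimages of $\zeta_{N_1}$.
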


\section{Growth result for quasiregular mappings near an essential singularity}

Before we begin the proof of Theorem~\ref{GrowthLimitTheorem}, we will first note the following fact about the maximum modulus for quasiregular mappings defined in a neighbourhood of an essential singularity; this follows from the maximum modulus principle and an application of Theorem~\ref{PicardTheorem}.

\begin{lem} \label{R'lemma}
	Let $R > 0$ and let $f: A(R, \infty) \to \mathbb{R}^d$ be a $K$-quasiregular mapping with an essential singularity at infinity. Then there exists $R' \geq R$ such that $M(r,f)$ is a strictly increasing function for $r\geq R'$.
\end{lem}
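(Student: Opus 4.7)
The plan is to first show that $M(r,f) \to \infty$ as $r \to \infty$, and then to combine the quasiregular maximum modulus principle (an immediate consequence of the openness of non-constant quasiregular mappings due to Reshetnyak) with the essential singularity hypothesis to rule out any plateau of $M(\cdot,f)$ at large radii.

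To prove divergence of $M(r,f)$, I would first use Theorem~\ref{PicardTheorem} to show that $f$ is unbounded on every set $A(R'',\infty)$ with $R'' \geq R$. Indeed, if $|f(x)| \leq C$ held for all $|x| > R''$, then $f$ would omit on $A(R'',\infty)$ any $\tilde{q_{0}}$ distinct points of $\hat{\mathbb{R}}^d$ of modulus exceeding $C$ (taking $\infty$ as one such point), so by Rickman's theorem $f$ would admit a (necessarily finite) limit at infinity, contradicting the essential singularity. Now fix any $R_0 > R$; for every $r > R_0$, the maximum modulus principle applied to the closed annulus $\{R_0 \leq |x| \leq r\}$ gives
\[
\sup_{R_0 \leq |x| \leq r} |f(x)| = \max\{M(R_0,f),\,M(r,f)\}.
\]
Since the left-hand side tends to infinity as $r \to \infty$ by the unboundedness just established, $M(r,f) \to \infty$. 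Choose $R' > R_0$ so that $M(r,f) > M(R_0,f)$ for every $r \geq R'$.

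Finally, for strict monotonicity on $[R',\infty)$, take $R' \leq r_1 < r_2$. Applying the maximum modulus principle on $\{R_0 \leq |x| \leq r_2\}$, the right-hand side of the displayed equality equals $M(r_2,f)$, so $|f(x)| \leq M(r_2,f)$ throughout that annulus and in particular $M(r_1,f) \leq M(r_2,f)$. If equality held, then $|f|$ would attain its maximum over the closed annulus at some $x_1$ with $|x_1| = r_1 \in (R_0, r_2)$, hence at an interior point; the open mapping property would then yield nearby points with $|f(x)| > M(r_2,f)$, a contradiction. The main obstacle is the first step, where one must carefully convert the qualitative ``essential singularity at infinity'' hypothesis into the quantitative statement $M(r,f) \to \infty$ via Picard's theorem; once this is achieved, strict monotonicity is a direct consequence of the openness of $f$.
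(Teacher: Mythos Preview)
Your argument is correct and follows essentially the same route that the paper indicates: the paper does not spell out a proof but states that the lemma ``follows from the maximum modulus principle and an application of Theorem~\ref{PicardTheorem}'', which is precisely the combination you use. Your write-up supplies the details the paper omits, in particular the reduction (via Rickman--Picard) from the essential singularity to $M(r,f)\to\infty$ and the openness argument excluding equality $M(r_1,f)=M(r_2,f)$.
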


	Using the above, we now aim to prove Theorem~\ref{GrowthLimitTheorem}. We will assume without loss of generality that $R>0$ is sufficiently large such that $f: A(R,\infty) \to \mathbb{R}^d$ is a $K$-quasiregular mapping with an essential singularity at infinity and $M(r,f)$ is a strictly increasing function for $r\geq R$.
	
	Now let $A>1$ be given and suppose for a contradiction to Theorem~\ref{GrowthLimitTheorem} that there exists some constant $L>1$ and some real sequence $r_{n} \to \infty$ such that $M(Ar_{n},f) \leq LM(r_{n},f)$. By taking subsequences and then starting from large enough $n$, we may assume that $(r_{n})$ is a strictly increasing sequence with $r_{1} > R$.
	
	Define a new sequence $(f_{n})$ by
	\begin{equation} \label{fnDefn}
	f_{n}(x) := \frac{f(r_{n}x)}{M(r_{n},f)}.
	\end{equation}
	
	For each $N \in \mathbb{N}$, let $A_{N} := A\left(R/r_{N}, A\right)$. Now for all $n \geq N$, $f_{n}$ is well-defined and $K$-quasiregular on $A_{N}$.
	
\begin{lem} \label{NormalLemma} There exists a bounded $K$-quasiregular mapping $h$ defined on $B(0, A) \setminus \{0\}$ and a subsequence of $(f_{n})$ that converges to $h$ locally uniformly on $B(0, A) \setminus \{0\}$.
\end{lem}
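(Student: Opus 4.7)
The plan is to exhibit $(f_n)$ as a uniformly bounded family of $K$-quasiregular mappings on each annulus $A_N$, invoke Miniowitz's theorem (Lemma~\ref{qmMontel}) to conclude normality on each $A_N$, and then extract a diagonal subsequence converging locally uniformly on $B(0,A)\setminus\{0\}=\bigcup_N A_N$ to a bounded $K$-quasiregular limit $h$.

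For the uniform bound, I would first observe that since $M(r,f)$ is increasing on $[R,\infty)$, the maximum modulus principle for quasiregular mappings applied on compact sub-annuli $\{s\leq |y|\leq t\}\subset A(R,\infty)$ yields $|f(y)|\leq M(|y|,f)$ for all $|y|>R$. Consequently,
\[
|f_n(x)|=\frac{|f(r_n x)|}{M(r_n,f)}\leq \frac{M(r_n|x|,f)}{M(r_n,f)}.
\]
For $R/r_n<|x|\leq 1$ the right-hand side is at most $1$ by monotonicity of $M$, while for $1<|x|<A$ it is at most $M(Ar_n,f)/M(r_n,f)\leq L$. Hence $(f_n)$ is uniformly bounded by $L$ on its domain. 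To invoke Lemma~\ref{qmMontel}, I fix any $\tilde{q_0}=\tilde{q_0}(d,K)$ pairwise distinct points $a_1,\ldots,a_{\tilde{q_0}}\in\hat{\mathbb{R}}^d$ with $a_1=\infty$ and $|a_i|>L$ for $i\geq 2$; these are omitted by every $f_n$ and are pairwise separated by some chordal distance $\epsilon>0$, so $(f_n)_{n\geq N}$ is a normal family on $A_N$ for each $N$.

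Finally, a standard diagonal argument over the exhaustion $A_1\subset A_2\subset\cdots$ produces a subsequence of $(f_n)$ converging locally uniformly on $B(0,A)\setminus\{0\}$ to a limit $h$. The uniform bound $|f_n|\leq L$ passes to $|h|\leq L$, which rules out the degenerate case of convergence to the constant $\infty$; hence the standard convergence theory for $K$-quasiregular maps gives that $h$ is $K$-quasiregular on $B(0,A)\setminus\{0\}$, completing the proof. The step I expect to take the most care with is establishing the pointwise bound $|f(y)|\leq M(|y|,f)$ on the annulus, since the maximum modulus principle must be applied to compact sub-annuli (rather than to a closed ball) and the monotonicity of $M$ is needed to clean up the estimate; the remaining diagonalisation and normality arguments are routine.
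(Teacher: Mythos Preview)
Your proposal is correct and follows essentially the same approach as the paper: uniformly bound $|f_n|\leq L$ on each $A_N$, invoke Miniowitz's normality criterion (Lemma~\ref{qmMontel}), and diagonalise over the exhaustion $A_1\subset A_2\subset\cdots$ to obtain a bounded $K$-quasiregular limit. One remark: the inequality $|f(y)|\leq M(|y|,f)$ is immediate from the definition $M(r,f)=\max\{|f(x)|:|x|=r\}$, so no maximum modulus argument on sub-annuli is actually needed for that step.
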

	
\begin{proof}
	
	Observe that for each $n \geq N$ and $x \in A_{N}$, 
	\begin{equation} \label{eq:fnBound1}
	|f_{n}(x)| \leq \frac{M(r_{n}|x|,f)}{M(r_{n},f)} \leq \frac{M(Ar_{n},f)}{M(r_{n},f)} \leq L.
	\end{equation}
	
	As $L$ is not dependent on $N$, then $f_{n}$ is uniformly bounded on $A_{N}$ for all $n \geq N$. By Lemma~\ref{qmMontel}, $\mathcal{F}_{N} := \{ f_{n} : n \geq N \}$ is a normal family on $A_{N}$ for each $N \in \mathbb{N}$. In particular, for the sequence $(f_{n}) \subset \mathcal{F}_1$ there exists a subsequence $(f_{1,k})_{k =1}^{\infty} \subset (f_{n})$ such that $(f_{1,k})$ converges locally uniformly on $A_{1}$. Discarding the first term if necessary, we may assume that $(f_{1,k}) \subset \mathcal{F}_2$ so the subsequence is defined and uniformly bounded on $A_{2}$. Thus there exists a subsequence $(f_{2,k})_{k =1}^{\infty} \subset (f_{1,k})$ such that $(f_{2,k})$ converges locally uniformly on $A_{2}$. 
	
	By repeating this process, we build a sequence of subsequences $(f_{1,k}), (f_{2,k}), \dots,$ such that $(f_{i,k}) \supset (f_{i+1,k})$ for all $i \in \mathbb{N}$ and $(f_{i,k})$ converges locally uniformly on $A_{i}$. Now consider the sequence $(f_{k,k})$ and observe that $(f_{k,k})_{k \geq i}$ is a subsequence of each $(f_{i,k})$ with $i \in \mathbb{N}$ by construction. This means that the pointwise limit function
	\begin{equation}\label{hDefn}
	h(w):= \lim_{k \to \infty}f_{k,k}(w)
	\end{equation}
exists on $B(0,A) \setminus \{0\}$.

	Let $D \subset B(0,A) \setminus \{0\}$ be a compact set. Then there exists some $N \in \mathbb{N}$ such that $D \subset A_{N}$ and $(f_{k,k})_{k \geq N}$ is defined on $D$.
	
	Now by construction, $(f_{N,k})$ converges uniformly on $D$. As $(f_{k,k})_{k \geq N}$ is a subsequence of $(f_{N,k})$, then from \eqref{hDefn} we have that $f_{k,k} \to h$ uniformly on $D$. Further, since $(f_{k,k})_{k \geq N}$ is a sequence of $K$-quasiregular mappings on $D$, then $h$ is $K$-quasiregular on $D$. Finally since $D$ was arbitrary, then $f_{k,k} \to h$ locally uniformly on $B(0,A) \setminus \{0\}$ and $h$ is $K$-quasiregular on $B(0,A) \setminus \{0\}$.
	
	
	By discarding terms and relabelling, we may assume that $f_{n} \to h$ locally uniformly on $B(0, A) \setminus \{0\}$. Now by \eqref{eq:fnBound1}, for all $x \in B(0, A) \setminus \{0\}$ we have that $|h(x)| \leq L$, so $h$ is bounded. 
\end{proof}

	By Theorem~\ref{thm:removeSing1}, we can extend $h$ to a $K$-quasiregular mapping defined on $B(0,A)$. By relabelling, let this extended map be $h$. 
	
	Before showing that $h(0)=0$, we make an observation. For each $n \in \mathbb{N}$, let $x_{n} \in S(1)$ be such that $|f(r_{n}x_{n})|=M(r_{n},f)$. As $S(1)$ is compact, then there exists a subsequence $(x_{n_{t}})$ of $(x_{n})$ that converges to some point $\tilde{x} \in S(1)$. Since $f_{n} \to h$ locally uniformly on $B(0,A) \setminus \{0\}$, then it follows that $f_{n_{t}}(x_{n_{t}}) \to h(\tilde{x})$ as $t \to \infty$. Therefore, $|h(\tilde{x})|=1$ for such $\tilde{x} \in S(1)$.

\begin{lem} \label{h0=0}
	Let $h$ be as above. Then $h(0)=0$, so $h$ is a non-constant $K$-quasiregular mapping.
\end{lem}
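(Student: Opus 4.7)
The plan is to argue by contradiction, assuming $h(0) = c \neq 0$, and to derive a contradiction via a Brouwer degree argument combined with Rickman's Picard theorem. First I would use continuity of the extended map $h$ at $0$ to pick $\rho_{0} \in (0, A)$ with $|h(x) - c| < |c|/2$ on $\overline{B(0, \rho_0)}$, so that $h$ has no zeros there. For each fixed $\delta \in (0, \rho_0)$, the uniform convergence $f_n \to h$ on the compact annulus $\overline{A(\delta, \rho_0)} \subset B(0,A) \setminus \{0\}$, together with the approximation property of the Brouwer degree (Theorem~\ref{DegreeProperties}(iii)), would yield $\mu(0, f_n, A(\delta, \rho_0)) = \mu(0, h, A(\delta, \rho_0)) = 0$ for all sufficiently large $n$. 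Unwinding via $f_n(x) = f(r_n x)/M(r_n, f)$, this says that $f$ has no zeros in the annulus $A(\delta r_n, \rho_0 r_n)$ whenever $n \geq N(\delta)$.

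Next I would apply Rickman's Picard theorem (Theorem~\ref{PicardTheorem}) to $f$ on $A(R,\infty)$: since $f$ has an essential singularity at infinity, the exceptional set in $\mathbb{R}^d$ contains at most $q_0$ values. I would choose a non-exceptional $a \in \mathbb{R}^d$ with $|a| \leq 1$, yielding a sequence $(y_k) \subset A(R, \infty)$ with $|y_k| \to \infty$ and $f(y_k) = a$. Repeating the same degree argument for the shifted maps $f_n - a/M(r_n, f)$ (which still converge locally uniformly to $h$ on $B(0,A) \setminus \{0\}$ since $a/M(r_n, f) \to 0$), I would conclude that $y_k \notin A(\delta r_n, \rho_0 r_n)$ for all $k$ and all $n \geq N(\delta)$.

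To close the contradiction, I would choose a sequence $\delta_n \to 0$ compatibly with the rate of uniform convergence of $f_n$ to $h$, and, if required, thin $(r_n)$ to a subsequence with controlled logarithmic spacing, so that the union $\bigcup_n A(\delta_n r_n, \rho_0 r_n)$ covers $A(R_0, \infty)$ for some $R_0 > 0$. Since $|y_k| \to \infty$, infinitely many $y_k$ would then lie in this union, contradicting the previous step. The main obstacle is precisely this covering step, which balances the rate $\delta_n \to 0$ (dictated by uniform convergence of $f_n$ on shrinking annuli) against the spacing of the sequence $(r_n)$; this is the key place where Bergweiler's argument, which used the elementary observation $f_n(0) = f(0)/M(r_n,f) \to 0$, must be significantly adapted, since $f_n(0)$ is undefined in our annular setting.
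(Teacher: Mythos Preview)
Your approach diverges from the paper's and contains a real gap at exactly the point you flag. The paper does \emph{not} use a degree-plus-Picard argument for this lemma at all. Instead, assuming $|h(0)|=\zeta\neq 0$, it fixes a \emph{single} small sphere $C_M=S(|z_M|)$ on which $|f_n(x)-h(0)|<1/T$ for all $n\ge N_M$ (using only continuity of $h$ at $0$ and locally uniform convergence). Then, choosing two indices $n$ and $n+t$ with $M(r_{n+t},f)>2M(r_n,f)$, it observes that $f(r_nC_M)$ and $f(r_{n+t}C_M)$ lie in disjoint balls $B_n,B_{n+t}$ centred at $M(r_n,f)h(0)$ and $M(r_{n+t},f)h(0)$. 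Since $f$ is open and continuous, the image of the annulus $V=A(r_n|z_M|,r_{n+t}|z_M|)$ is a connected open set whose boundary lies in $B_n\cup B_{n+t}$; connectedness forces $f(V)$ to be unbounded, contradicting continuity of $f$ on the compact set $\overline V$. No covering, no Picard, no degree is invoked here; only openness, connectedness, and two well-chosen radii.

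Your proposed route, by contrast, needs the annuli $A(\delta_n r_n,\rho_0 r_n)$ to cover a neighbourhood of infinity. This requires $\delta_{n+1}r_{n+1}\le \rho_0 r_n$, i.e.\ $\delta_{n+1}\le \rho_0\, r_n/r_{n+1}$, while simultaneously $n\ge N(\delta_n)$ so that the degree identity applies. You have \emph{no} control over the ratios $r_{n+1}/r_n$: the contradiction hypothesis only gives a sequence along which $M(Ar_n,f)\le LM(r_n,f)$, and these $r_n$ may be arbitrarily sparse (e.g.\ $r_n=2^{2^n}$). Your suggested remedy of ``thinning $(r_n)$ to a subsequence with controlled logarithmic spacing'' goes the wrong way: passing to a subsequence can only \emph{increase} gaps, never close them. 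Since the $a$-preimages $y_k$ could all live in those gaps, the argument does not close. The paper's connectedness trick sidesteps the issue entirely by working with just one fixed sphere and two values of $n$, so there is nothing to cover.
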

	
\begin{proof}
	Suppose that $|h(0)| = \zeta \neq 0$. Let $T> 4/\zeta$, $(z_{m}) \subset B(0,A)\setminus \{0\}$ be a sequence such that $z_{m} \to 0$ as $m \to \infty$, and define $C_{m} := S(|z_{m}|)$ for each $m \in \mathbb{N}$. As $h$ is a continuous function, then there exists some $\delta>0$ such that $|h(x) - h(0)|< 1/2T$ whenever $|x| < \delta$. In particular, there exists an $M \in \mathbb{N}$ such that $|z_{m}|<\delta$ whenever $m \geq M$. Hence for all $x \in C_{M}$, we have $|h(x) - h(0)|<1/2T$ whenever $m \geq M$. 
	
	Now as $f_{n} \to h$ locally uniformly on $B(0,A) \setminus \{0\}$ then for all $x \in C_{M}$, there exists some $N_{M} \in \mathbb{N}$ such that $|f_{n}(x) - h(x)|< 1/2T$ whenever $n \geq N_{M}$. Therefore, for every $x \in C_{M}$,
	\begin{equation} \label{eq:smallBall}
	|f_{n}(x) - h(0)| \leq |f_{n}(x) - h(x)| + |h(x) - h(0)|
	< \frac{1}{2T} + \frac{1}{2T} = \frac{1}{T}, 
	\end{equation}	
	whenever $n \geq N_{M}$. Fix such an $n$.
	
	 Since $M(r_{k},f) \to \infty$ as $k \to \infty$, then there exists some $t \in \mathbb{N}$ such that $M(r_{n+t},f)>2M(r_{n},f)$. Now consider $V := A(r_{n}|z_{M}|,r_{n+t}|z_{M}|)$.

	As $n \geq N_{M}$ then from \eqref{eq:smallBall},
	\begin{align*}	 
	f(r_{n}C_{M}) &= M(r_{n},f)f_{n}(C_{M}) \subset B\left(M(r_{n},f)h(0), \frac{M(r_{n},f)}{T}\right) =: B_{n},\text{ and} \\
	f(r_{n+t}C_{M}) &= M(r_{n+t},f)f_{n+t}(C_{M}) \\
	 & \qquad \qquad \qquad \quad \subset B\left(M(r_{n+t},f)h(0), \frac{M(r_{n+t},f)}{T}\right) =: B_{n+t}.
	\end{align*}
Since $M(r_{n+t},f)>2M(r_{n},f)$ and $T \zeta >4$, it follows that $\overline{B}_{n} \cap \overline{B}_{n+t} = \varnothing$.
	
	As $f$ is continuous and open, then $f(V)$ is an open path-connected set. Now there exist $x \in f(V) \cap B_{n}$, $y \in f(V) \cap B_{n+t}$ and a continuous path $\beta:[0,1] \to f(V)$ with endpoints $x$ and $y$. 
	
	Since $\overline{B}_{n}$ and $\overline{B}_{n+t}$ are disjoint, then there must exist some $c \in (0,1)$ such that $\beta(c) \in f(V) \setminus(B_{n} \cup B_{n+t})$. However, as $f$ is open, then $\partial f(V) \subset f(\partial V) \subset B_{n} \cup B_{n+t}$, so $f(V)$ must be unbounded. This contradicts the fact that $f$ is continuous on $\overline{V}$.
\end{proof}
	
	
	Now by Theorem~\ref{PicardTheorem}, there exists some $a \in \mathbb{R}^{d}$ such that $f$ takes the value $a$ infinitely often. Without loss of generality we may assume that $a =0$, else we can consider instead the function $f(x+a)-a$ rather than $f$. We aim to get a contradiction using the Brouwer degree of $f$ and $h$. 
	
	Let $t_{2} \in (0,A)$ be such that $h(x) \neq 0$ for all $x \in S(t_{2})$. Then let $F:= \min \{|h(x)| : x \in S(t_{2}) \} >0$. Since $h(0)=0$ and $h$ is continuous at 0, then we can choose some $t_{1} \in (0, t_{2})$ such that $P:= M(t_{1},h) < F/4$. Now, set $U := A(t_{1}, t_{2})$, so $\overline{U} \subset B(0,A)\setminus \{0\}$. Using this spherical shell, we will show that there exists some point $y$ such that the Brouwer degrees of $f_{n}$ and $h$ at $y$ over $U$ agree for large $n$.

\begin{lem} \label{muConverge}
	Let $f_n$ be defined as in \eqref{fnDefn} and let $h$ be defined as in Lemma~\ref{h0=0}. Then there exists some $N \in \mathbb{N}$ such that whenever $n \geq N$, then $\mu(y,f_n,U) = \mu(y,h,U)$ for some point $y \in f_{n}(U) \cap h(U)$.
\end{lem}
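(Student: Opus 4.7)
The plan is to exploit Theorem~\ref{DegreeProperties}(iii): the Brouwer degree is invariant under perturbations of the map whose size on $\partial U$ is strictly smaller than $\min_{\partial U} |h(x) - y|$. Since $\partial U = S(t_1) \cup S(t_2)$ is compact in $B(0,A) \setminus \{0\}$ and $f_n \to h$ locally uniformly there, I can make $\max_{\partial U}|f_n - h|$ as small as I like by taking $n$ large, and so the only real work is to choose a $y$ for which (i) $|h(x) - y|$ has a uniform positive lower bound on $\partial U$, and (ii) $\mu(y, h, U) > 0$, so that in addition $y$ belongs to both images $h(U)$ and $f_n(U)$.

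For (i), I would fix $y \in \mathbb{R}^d$ with $|y| = 2P$. By the definitions of $P$ and $F$ together with $P < F/4$, on $S(t_1)$ one has $|h(x) - y| \geq 2P - P = P$, while on $S(t_2)$ one has $|h(x) - y| \geq F - 2P > 2P$. Thus $\min_{\partial U}|h(x) - y| \geq P > 0$, and for all sufficiently large $n$ the hypothesis of Theorem~\ref{DegreeProperties}(iii) is met, yielding $\mu(y, f_n, U) = \mu(y, h, U)$.

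The main step, and likely the principal obstacle, is to verify (ii), since the degree equality alone does not guarantee that $y$ lies in the image of either map. I would use Theorem~\ref{DegreeProperties}(ii) applied to $D = B(0, t_2)$ with the decomposition $X_1 = B(0, t_1)$, $X_2 = U$; this is valid because $y \notin h(S(t_1)) \cup h(S(t_2))$, the former since $|y| > P = M(t_1, h)$ and the latter since $|y| < F$. Additivity then gives
\begin{equation*}
\mu(y, h, B(0, t_2)) = \mu(y, h, B(0, t_1)) + \mu(y, h, U),
\end{equation*}
and the first summand on the right vanishes since $y \notin h(\overline{B(0, t_1)})$. For the left-hand side, the complement $\mathbb{R}^d \setminus h(S(t_2))$ contains the open ball $B(0, F)$, which in turn contains both $0$ and $y$, so Theorem~\ref{DegreeProperties}(i) yields $\mu(y, h, B(0, t_2)) = \mu(0, h, B(0, t_2)) \geq i(0, h) \geq 1$, using that $h(0) = 0$ by Lemma~\ref{h0=0} and that local indices of non-constant quasiregular maps are positive integers. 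This forces $\mu(y, h, U) > 0$, so $y$ lies in $h(U)$ and, by the degree equality, also in $f_n(U)$, finishing the proof.
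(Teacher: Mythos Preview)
Your proof is correct and hinges on the same perturbation step via Theorem~\ref{DegreeProperties}(iii) as the paper's, with the same lower bound $\min_{\partial U}|h(x)-y|\ge P$. The genuine difference is in how you certify that $y$ lies in both images. The paper picks $y\in A(2P,F/2)$ and argues topologically: from $M(t_1,f_n)\le 2P$ and $\min_{S(t_2)}|f_n|>F/2$ (and the analogous bounds for $h$), openness and connectedness force the whole annulus $A(2P,F/2)$ into $f_n(U)\cap h(U)$, so any such $y$ works. You instead stay entirely within degree theory: additivity on $B(0,t_2)=B(0,t_1)\cup U$, together with $\mu(y,h,B(0,t_1))=0$ and $\mu(y,h,B(0,t_2))=\mu(0,h,B(0,t_2))\ge i(0,h)\ge 1$, yields $\mu(y,h,U)>0$, and the degree equality then gives $y\in f_n(U)$ for free. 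Your route avoids the separate topological containment argument at the price of one extra degree computation; the paper's route is more direct for the image statement but leans on openness of the maps. One practical remark: the paper takes $|y|$ strictly greater than $2P$ because the subsequent Lemma~\ref{dInfty} uses $|y_n|>2PM(r_n,f)$; with your choice $|y|=2P$ that inequality becomes an equality, so if you carry your $y$ forward you should nudge it to, say, $|y|\in(2P,F/2)$, which your argument handles without change.
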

	
\begin{proof}
	As $f_{n} \to h$ uniformly on compact subsets of $B(0,A) \setminus \{0\}$, then there exists $N \in \mathbb{N}$ such that
	\begin{equation} \label{boundaryDistanceBound}
	\sup\{|f_{n}(x)-h(x)| : x \in \partial U \} \leq \sup\{|f_{n}(x) - h(x)| : x \in \overline{U}\} < P, 
	\end{equation}
whenever $n \geq N$. In particular, for all $n \geq N$ and for all $x \in \partial U$, we have $||f_{n}(x)| - |h(x)|| \leq P$. It follows that whenever $n \geq N$, then
	
	\begin{equation} \label{lowerBoundt1}
	M(t_{1},f_{n}) \leq M(t_{1},h) +P = 2P,
	\end{equation}
	and
	\begin{equation} \label{upperBoundt2}
	\min\{|f_{n}(x)| : x \in S(t_{2})\} > \min\{|h(x)| : x \in S(t_{2})\} - \frac{F}{2} = F - \frac{F}{2} = \frac{F}{2}.
	\end{equation} 

Now, for all $n \geq N$ we have that $A\left(2P, F/2 \right) \subset f_{n}(U)$ since the $f_{n}$ are open and continuous. In addition, $A\left(2P, F/2 \right) \subset h(U)$ by construction. Fix some $y \in A\left(2P, F/2 \right)$.

	For all $x \in \partial U$ and $n \geq N$, we have $f_{n}(x) \neq y$ and $h(x) \neq y$. Thus from \eqref{lowerBoundt1} and \eqref{upperBoundt2}, whenever $n \geq N$ we have 
	\begin{align*}
	\min\{ |h(x)-y| : x \in \partial U\} &> \min\{ 2P-M(t_{1},h), \min\{|h(x)| : x \in S(t_{2}) \} - \frac{F}{2} \} \\
	&= \min\{P, \frac{F}{2} \} = P.
	\end{align*}
	Therefore by Theorem~\ref{DegreeProperties}(iii) and \eqref{boundaryDistanceBound}, we conclude that $\mu(y,f_n,U) = \mu(y,h,U)$ whenever $n \geq N$.
\end{proof}

As $h$ is a discrete mapping, then $h^{-1}(y) \cap U$ is a finite set and so 
\begin{equation} \label{dFinite}
	d := \mu(y,h,U) < \infty.
\end{equation}	
Using \eqref{dFinite} and Lemma \ref{muConverge}, we shall now aim for a contradiction by considering the behaviour of $\mu(y,f_n,U)$ as $n \to \infty$.
	
	For $n \geq N$, define $d_{n}= \mu(y,f_{n},U)$, $y_{n} = M(r_{n},f)y$ and $U_{n} = A(r_{n}t_{1}, r_{n}t_{2}) = r_{n}U$, where $y \in A(2P, F/2)$ is from Lemma~\ref{muConverge}. Now observe that for each $n \geq N$, we have $y_{n} \not \in f(\partial U_{n})$. It then follows by Theorem~\ref{DegreeProperties}(iv) and \eqref{fnDefn} that for each $n \geq N$,
\begin{equation} \label{dnDefn}
d_{n} = \mu(y,f_{n},U) = \mu(M(r_{n},f)y,f,U_{n}) = \mu(y_{n},f,U_{n}).
\end{equation}

\begin{lem} \label{dInfty}
	Let $d_{n}$ be as in \eqref{dnDefn}. Then $d_{n} \to \infty$ as $n \to \infty$.
\end{lem}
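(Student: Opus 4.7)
The plan is to apply Brouwer degree additivity on the enlarged annulus $A(R, r_n t_2)$, first eliminating the contribution from the inner sub-annulus $A(R, r_n t_1)$ and then using that $f$ attains $0$ infinitely often to force $\mu(y_n, f, A(R, r_n t_2))$ to grow. By Theorem~\ref{DegreeProperties}(ii),
\begin{equation*}
\mu(y_n, f, A(R, r_n t_2)) = \mu(y_n, f, A(R, r_n t_1)) + d_n,
\end{equation*}
once one checks $y_n \notin f(S(r_n t_1))$.

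First I would show that the inner term vanishes for large $n$. The uniform convergence $f_n \to h$ on the compact set $S(t_1)$, together with $M(t_1, h) = P$, gives $|f_n| \leq 3P/2$ on $S(t_1)$ and hence $|f| \leq (3P/2) M(r_n, f)$ on $S(r_n t_1)$ for all large $n$. Combined with $|f| \leq M(R, f)$ on $S(R)$ and the maximum principle for quasiregular mappings applied to $\overline{A(R, r_n t_1)}$, we conclude $|f| \leq (3P/2) M(r_n, f)$ throughout $\overline{A(R, r_n t_1)}$ for $n$ large. Since $|y_n| = |y| M(r_n, f) > 2P M(r_n, f) > (3P/2) M(r_n, f)$, we get $y_n \notin f(\overline{A(R, r_n t_1)})$, so $\mu(y_n, f, A(R, r_n t_1)) = 0$. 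Thus $d_n = \mu(y_n, f, A(R, r_n t_2))$ for all large $n$.

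It remains to show $\mu(y_n, f, A(R, r_n t_2)) \to \infty$. By our reduction to $a=0$, $f$ has infinitely many zeros $z_k$ with $|z_k| \to \infty$, and since $|f| > (F/2) M(r_n, f) > 0$ on $S(r_n t_2)$ (from $f_n \to h$ and $|h| \geq F$ on $S(t_2)$), the value $0$ is omitted from $f(S(r_n t_2))$; assuming WLOG that $0 \notin f(S(R))$, it follows that $\mu(0, f, A(R, r_n t_2))$ equals the number of zeros of $f$ in $A(R, r_n t_2)$ counted with multiplicity, and this tends to infinity. The main obstacle is transferring this growth to the degree at $y_n$ via Theorem~\ref{DegreeProperties}(i), which requires $0$ and $y_n$ to lie in the same connected component of $\mathbb{R}^d \setminus f(\partial A(R, r_n t_2))$. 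Both points are contained in $B(0, (F/2) M(r_n, f))$, hence separated from the outer boundary image $f(S(r_n t_2))$; the delicate point is that the fixed compact set $f(S(R))$ may topologically separate them. I anticipate overcoming this by exploiting the freedom in the choice of $y \in A(2P, F/2)$, picking $y$ so that the radial segment from $0$ to $y_n$ avoids $f(S(R))$ (possible for a generic direction, since $f(S(R))$ is the continuous image of a $(d-1)$-dimensional sphere), or by replacing $R$ with a larger, suitably chosen radius to arrange a favourable topological configuration. With the same-component property secured, Theorem~\ref{DegreeProperties}(i) yields $d_n = \mu(0, f, A(R, r_n t_2)) \to \infty$, which combined with Lemma~\ref{muConverge} and \eqref{dFinite} will furnish the contradiction that proves the theorem.
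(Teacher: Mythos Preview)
Your approach has a genuine gap that your proposed fixes do not resolve. You correctly isolate the obstacle: a fixed inner boundary sphere $S(R)$ (or $S(R')$ for some $R'>R$, since $f$ is only defined on the open annulus $A(R,\infty)$) produces a fixed compact set $f(S(R'))$ which may separate $0$ from $y_n$ for large $n$. Neither proposed remedy works in general. For the first, if $0$ lies in a bounded component of $\mathbb{R}^d\setminus f(S(R'))$ --- for instance if $f(S(R'))$ is a topological sphere enclosing the origin --- then \emph{every} ray from $0$ meets $f(S(R'))$, so no admissible direction for $y$ exists; the ``generic direction'' heuristic fails precisely here. For the second, nothing in the hypotheses forces the existence of a radius $R'$ for which $0$ lies in the unbounded component of $\mathbb{R}^d\setminus f(S(R'))$; only a lower bound on $|f|$ along $S(R')$ would guarantee this, and no such bound is available at an arbitrary fixed radius.

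The paper avoids the difficulty by never fixing an inner boundary. Instead it compares $d_{n+1}$ with $d_n$ via annuli whose boundary spheres are all of the form $S(r_k t_1)$ or $S(r_k t_2)$, on which the convergence $f_k\to h$ gives quantitative control: $|f|\le 2PM(r_k,f)$ on $S(r_k t_1)$ and $|f|>(F/2)M(r_k,f)$ on $S(r_k t_2)$. The crucial step uses the annulus $A(r_n t_2, r_{n+1}t_2)$, on \emph{both} of whose boundary spheres $|f|$ exceeds $(F/2)M(r_n,f)>|y_n|>0$; hence $0$ and $y_n$ automatically lie in the same component of the complement of the boundary image, and Theorem~\ref{DegreeProperties}(i) gives $\mu(y_n,f,A(r_n t_2,r_{n+1}t_2))=\mu(0,f,A(r_n t_2,r_{n+1}t_2))$. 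Combined with additivity and a vanishing term on $A(r_n t_1,r_{n+1}t_1)$, this yields $d_{n+1}=d_n+\mu(0,f,A(r_n t_2,r_{n+1}t_2))$, and telescoping gives $d_n=d_N+\mu(0,f,A(r_N t_2,r_n t_2))\to\infty$. The moving inner boundary, with its guaranteed lower bound on $|f|$, is exactly what makes the same-component verification automatic; a fixed inner boundary lacks any such lower bound and the argument cannot be repaired without reintroducing this device.
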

	
\begin{proof}
	Fix some $n \geq N$ and consider $d_{n} = \mu(y_{n},f,U_{n})$ and $d_{n+1} = \mu(y_{n+1},f,U_{n+1})$. 
	
	First note that from \eqref{fnDefn} and \eqref{lowerBoundt1} we have
	\begin{equation*}
	M(t_{1}, f_{n+1}) = \frac{M(r_{n+1}t_{1},f)}{M(r_{n+1},f)} \leq 2P.
	\end{equation*}	
	 Now since $|y_{n+1}|> 2PM(r_{n+1},f) \geq M(r_{n+1}t_{1},f)$, it follows that 
	\begin{equation} \label{muStatement1}
	\mu(y_{n+1},f,A(r_{n}t_{1}, r_{n+1}t_{1})) =0.
	\end{equation}

	Next, as $|y_{n}|, |y_{n+1}| \in \left(2PM(r_{n},f), (F/2)M(r_{n+1},f)\right)$, then Theorem~\ref{DegreeProperties}(i) gives,
	\begin{equation} \label{muStatement2}
	\mu(y_{n},f,A(r_{n}t_{1},r_{n+1}t_{2})) = \mu(y_{n+1},f,A(r_{n}t_{1},r_{n+1}t_{2})).
	\end{equation}
	
	Finally, as $\min\{|f_{n}(x)| : x \in S(t_{2}) \} > F/2$, then
	\begin{equation*}
	\min\{|f(x)| : x \in S(r_{n}t_{2}) \} > \frac{F}{2}M(r_{n},f) > |y_{n}| >0.
	\end{equation*}
	
	This means by Theorem~\ref{DegreeProperties}(i),
	\begin{equation} \label{muStatement3}
	\mu(0,f,A(r_{n}t_{2}, r_{n+1}t_{2})) = \mu(y_{n},f,A(r_{n}t_{2}, r_{n+1}t_{2})).
	\end{equation}
	
	Therefore, using \eqref{muStatement1}, \eqref{muStatement2}, \eqref{muStatement3} and Theorem~\ref{DegreeProperties}(ii), 
	\begin{align} \label{dInequality}
	d_{n+1} &= d_{n+1} + \mu(y_{n+1},f,A(r_{n}t_{1}, r_{n+1}t_{1})) \nonumber \\
	&= \mu(y_{n+1},f,A(r_{n}t_{1},r_{n+1}t_{2})) \nonumber \\
	&= \mu(y_{n},f,A(r_{n}t_{1},r_{n+1}t_{2})) \nonumber \\
	&= \mu(y_{n},f,A(r_{n}t_{2},r_{n+1}t_{2})) + d_{n} \nonumber \\
	&= \mu(0,f,A(r_{n}t_{2}, r_{n+1}t_{2})) +d_{n}.
	\end{align}
	
	Now for all $n \geq N$, by applying \eqref{dInequality} finitely many times and using Theorem~\ref{DegreeProperties}(ii) again we get that, 
	\begin{equation*}
	d_{n} = \sum_{i=N}^{n-1} \mu(0,f,A(r_{i}t_{2}, r_{i+1}t_{2})) + d_{N} 
	= \mu(0,f,A(r_{N}t_{2}, r_{n}t_{2})) + d_{N}.
	\end{equation*}
	
	It remains to note that as $f$ has infinitely many zeros, then $\mu(0,f,A(r_{N}t_{2}, r_{n}t_{2})) \to \infty$ as $n \to \infty$, completing the proof.
\end{proof}
	
	 A contradiction now follows from Lemma~\ref{muConverge}, Lemma~\ref{dInfty} and \eqref{dFinite}, completing the proof of Theorem~\ref{GrowthLimitTheorem}.

\section{Proof of Theorem~\ref{MainTheorem}: Finitely many poles}

With the growth result of Theorem~\ref{GrowthLimitTheorem} established, we are now in a position to prove Theorem~\ref{MainTheorem} in the case where the quasimeromorphic mapping of transcendental type has at least one pole, but finitely many poles; this will closely follow the proof from Sections~3.2~-~3.4 in \cite{Nicks1} which covered the case for quasiregular mappings. Within the proof of Nicks, the covering and waiting sets can be found sufficiently close to the essential singularity. 

For $f:\mathbb{R}^d \to \hat{\mathbb{R}}^d$ a quasimeromorphic mapping of transcendental type with finitely many poles, there exists some $R>0$ such that all the poles of $f$ are contained in $B(0,R)$. This means that $f$ restricted to $A(R, \infty)$ is a quasiregular mapping with an essential singularity at infinity. It therefore suffices to verify that the results stated by Nicks in \cite{Nicks1} for quasiregular mappings of transcendental type on $\mathbb{R}^d$ remain valid for mappings defined on a neighbourhood of the essential singularity.

\subsection{Functions with the pits effect}

The following definition of the pits effect we shall use is adapted from \cite{BN1}. 
\begin{defn}
	Let $R>0$ and let $f : A(R, \infty) \to \mathbb{R}^d$ be a $K$-quasiregular mapping with an essential singularity at infinity. Then $f$ is said to have the pits effect if there exists some $N \in \mathbb{N}$ such that, for all $s>1$ and all $\epsilon>0$, there exists $T_{0} \geq R$ such that 
	\begin{equation*}
	\{ x \in \overline{A(T,sT)} : |f(x)| \leq 1 \}
	\end{equation*} 
can be covered by $N$ balls of radius $\epsilon T$ whenever $T > T_{0}$.
\end{defn}

As a direct consequence of \cite[Theorem~8.1]{BN1}, using Corollary~\ref{GrowthLimitTheoremCorollary} rather than \cite[Lemma~3.4]{Bergweiler2} in the proof, we get the following analogous result.
\begin{lem}	\label{pitsDefn2}
Let $R>0$ and let $f : A(R, \infty) \to \mathbb{R}^d$ be a $K$-quasiregular mapping with an essential singularity at infinity that has the pits effect. Then there exists some $N \in \mathbb{N}$ such that, for all $s>1$, all $\alpha>1$ and all $\epsilon>0$, there exists $T_{0} \geq R$ such that 
	\begin{equation*}
	\{ x \in \overline{A(T,sT)} : |f(x)| \leq T^{\alpha} \}
	\end{equation*} 
can be covered by $N$ balls of radius $\epsilon T$ whenever $T > T_{0}$.
\end{lem}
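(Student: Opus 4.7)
The plan is to reproduce the argument of \cite[Theorem~8.1]{BN1} in the present local setting. That theorem is the same statement under the stronger hypothesis that $f$ is quasiregular on all of $\mathbb{R}^d$, and its proof appeals to the super-polynomial growth estimate for $M(r,f)$ proved by Bergweiler as \cite[Lemma~3.4]{Bergweiler2}. The content of that growth estimate is precisely the conclusion of Corollary~\ref{GrowthLimitTheoremCorollary}; since Corollary~\ref{GrowthLimitTheoremCorollary} holds in our local setting $f\colon A(R,\infty) \to \mathbb{R}^d$, the \cite{BN1} argument transfers \emph{mutatis mutandis}, with that single substitution.

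For concreteness, the strategy I would follow is to fix $\alpha > 1$, $s > 1$, $\epsilon > 0$ and let $N$ be the pits-effect constant of $f$. Arguing by contradiction: if the conclusion fails then, for some sequence $T_k \to \infty$, one can find $N+1$ points $x_{k,1}, \dots, x_{k,N+1}$ in $\overline{A(T_k, sT_k)}$ at pairwise distances at least $2\epsilon T_k$ with $|f(x_{k,j})| \le T_k^\alpha$ for every $j$. One then uses distortion and covering estimates for $K$-quasiregular maps, together with the super-polynomial growth provided by Corollary~\ref{GrowthLimitTheoremCorollary}, to produce nearby points $\xi_{k,j}$ with $|f(\xi_{k,j})| \le 1$ and pairwise distances still of order $\epsilon T_k$. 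Since the $\xi_{k,j}$ lie in a slightly enlarged annulus $\overline{A(T_k', s'T_k')}$ with $T_k'$ comparable to $T_k$, the pits effect of $f$ applied on that enlarged annulus with radius parameter $\epsilon/2$ must cover all $N+1$ of the $\xi_{k,j}$ by $N$ balls of radius $\epsilon T_k /2$, which is impossible once the pairwise separations exceed $\epsilon T_k$.

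The main obstacle, I expect, will be the quantitative step that turns a point where $|f| \le T^\alpha$ into a nearby point where $|f| \le 1$: this is precisely the place where Corollary~\ref{GrowthLimitTheoremCorollary} is genuinely needed, since the image-scale gap $T^\alpha$ must be absorbed into a domain-scale perturbation negligible at scale $\epsilon T$, and this requires the super-polynomial growth of $M(r,f)$ to dominate the factor $T^\alpha$. The supporting technical tools I would use to make the details precise are Lemma~\ref{R'lemma} (to ensure $M(r,f)$ is monotone on the relevant range), Lemma~\ref{qmMontel} (to provide compactness for any rescaled families that arise), and Theorem~\ref{DegreeProperties} together with the Brouwer-degree bookkeeping already used in the proof of Theorem~\ref{GrowthLimitTheorem}; each of these was established above in the local setting, so no further analytic input beyond Corollary~\ref{GrowthLimitTheoremCorollary} is required.
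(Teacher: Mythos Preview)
Your proposal is correct and matches the paper's approach exactly: the paper simply states that the result follows from the proof of \cite[Theorem~8.1]{BN1} with Corollary~\ref{GrowthLimitTheoremCorollary} substituted for \cite[Lemma~3.4]{Bergweiler2}, which is precisely the substitution you identify. The paper gives no further details, so your sketch of the contradiction argument (and your list of supporting local tools) is if anything more explicit than what the paper provides.
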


Throughout the remainder of Section~4.1, we shall assume that $f$ is as in the statement of Theorem~\ref{MainTheorem} and that the restriction $f:A(R,\infty) \to \mathbb{R}^d$ is a $K$-quasiregular mapping that has the pits effect. Using Lemma~\ref{R'lemma}, we can further assume that $R>0$ is sufficiently large enough that $M(r,f)$ is a strictly increasing function for $r\geq R$.

First we require some self-covering sets to achieve the `hold-up' criteria from Lemma~\ref{lem:HoldUp}. The following lemma is essentially that of \cite[Lemma~3.3]{Nicks1}, with the proof following similarly.

\begin{lem} \label{pitsCoverN}
There exists $\delta \in \left(0,1/2 \right]$ and a sequence of points $x_{t} \to \infty$ such that the moduli $T_{t} = |x_{t}|$ are strictly increasing and the balls $B_{t} := B(x_{t}, \delta T_{t})$ are such that
	\begin{equation} \label{pitsHoldup}
	B_{t} \subset B(0, 2T_{t}) \subset f(B_{t})
	\end{equation}
for all $t \in \mathbb{N}$.
\end{lem}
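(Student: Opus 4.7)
The plan is to use the pits effect via Lemma~\ref{pitsDefn2} to locate centres $x_t$ sitting far from every pit, forcing $|f|$ to be extremely large on a sizeable ball around each $x_t$, and then to establish the covering $f(B_t)\supset B(0,2T_t)$ by a Brouwer degree argument modelled on those in Section~3.

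For the choice of centres, fix any $\alpha>1$ and let $N$ be the integer from Lemma~\ref{pitsDefn2}. A purely geometric packing argument shows that one can pick $\delta\in(0,1/2]$ and $\epsilon\in(0,\delta)$ depending only on $d$ and $N$ so that any $N$ balls of radius $\epsilon S$ fail to cover some ball of radius $2\delta S$ inside the annulus $A(3S/2,2S)$. Applying Lemma~\ref{pitsDefn2} with $s=2$ and this $\epsilon$ along a sequence $S_t\to\infty$ chosen to grow sufficiently quickly (e.g.\ $S_{t+1}>4S_t$) to ensure strict monotonicity of the resulting moduli $T_t:=|x_t|$, one finds $x_t\in A(3S_t/2,2S_t)$ with $B(x_t,2\delta S_t)\subset A(S_t,2S_t)$ disjoint from $\{|f|\leq S_t^{\alpha}\}$. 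Then $B_t:=B(x_t,\delta T_t)\subset B(x_t,2\delta S_t)$, so $|f|\geq S_t^{\alpha}\geq (T_t/2)^{\alpha}$ on $\overline{B_t}$. For $t$ large this lower bound exceeds $2T_t$, so $|f|>2T_t$ on $\overline{B_t}$; and $B_t\subset B(0,2T_t)$ is automatic from $|x_t|=T_t$ and $\delta\leq 1/2$.

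It remains to show $f(B_t)\supset B(0,2T_t)$. Since $|f|>2T_t$ on $\partial B_t$, the Brouwer degree $\mu(w,f,B_t)$ is defined for every $w\in B(0,2T_t)$ and, by Theorem~\ref{DegreeProperties}(i), is constant on that ball. Hence it suffices to prove $\mu(w,f,B_t)\neq 0$ for at least one such $w$. To accomplish this I would rescale in the spirit of Section~3: set $g_t(u):=f(x_t+\delta T_t u)/f(x_t)$ on $B(0,1)$, interpreting the quotient via a suitable M\"obius normalisation. Each $g_t$ omits a large neighbourhood of $0$, and after an appropriate inversion Lemma~\ref{qmMontel} yields a subsequential non-constant $K$-quasiregular limit $g$ on $B(0,1)$. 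Discreteness and openness of $g$ provide a regular value with non-zero local degree, and then (as in Lemma~\ref{muConverge}) Theorem~\ref{DegreeProperties}(iii) transfers this to $g_t$ for all large $t$, while Theorem~\ref{DegreeProperties}(iv) hands it back to $f$ on $B_t$.

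The main obstacle is this last step: ruling out cancellation of positive and negative local indices in $\mu(w,f,B_t)$. The pits effect alone supplies only a pointwise size estimate for $|f|$ and carries no topological content, so one really does need to construct a non-trivial limit quasiregular mapping by rescaling and exploit its non-constancy to force a non-zero degree. Once this non-vanishing is secured, the remaining bookkeeping—choosing $\alpha>1$, $\delta\in(0,1/2]$ and $\epsilon$, and the fast-growing sequence $(S_t)$—is routine.
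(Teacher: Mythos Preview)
Your construction has a fatal error at the very first step. You choose the ball $B_t$ to be \emph{disjoint} from the pits, so that $|f|>2T_t$ holds on \emph{all} of $\overline{B_t}$, not merely on $\partial B_t$. But then $f(\overline{B_t})\subset\{w:|w|>2T_t\}$, which gives $f(B_t)\cap B(0,2T_t)=\varnothing$ --- exactly the opposite of the required inclusion $B(0,2T_t)\subset f(B_t)$. No degree argument can rescue this: for every $w\in B(0,2T_t)$ one has $f^{-1}(w)\cap B_t=\varnothing$, hence $\mu(w,f,B_t)=0$.

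The correct idea (as in \cite[Lemma~3.3]{Nicks1}) is the reverse: place the centre $x_t$ \emph{inside} a pit, for instance at a zero of $f$ (such zeros exist with $|x_t|\to\infty$ by Theorem~\ref{PicardTheorem}), and then use the pits effect to choose a radius $\delta T_t$ so that the \emph{sphere} $\partial B_t$ avoids all $N$ pit-balls of radius $\epsilon T_t$. This is possible because the $N$ pit-balls can obstruct only a set of radii of total length at most $2N\epsilon T_t$, which is less than $\delta T_t/2$ for $\epsilon$ small. One then has $|f|>T_t^{\alpha}>2T_t$ on $\partial B_t$ while $f(x_t)=0\in B(0,2T_t)$. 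Now the covering follows immediately: since $f$ is open, $\partial f(B_t)\subset f(\partial B_t)\subset\{|w|>2T_t\}$, so the connected open set $f(B_t)$ meets $B(0,2T_t)$ (at $0$) and has boundary outside it, forcing $B(0,2T_t)\subset f(B_t)$. Your worry about ``cancellation of positive and negative local indices'' is also misplaced: non-constant quasiregular maps are sense-preserving, so every local index $i(x,f)$ is a positive integer and $\mu(w,f,B_t)>0$ whenever $w\in f(B_t)$. No normal-family limit is needed.
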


From Corollary~\ref{GrowthLimitTheoremCorollary}, for all large $r$ we have $M(r,f) > 2r$. Thus we shall now assume that the $T_{t}$ as defined in Lemma~\ref{pitsCoverN} are large enough such that the sequence $(r_{t})$, defined by $M(r_{t},f) = T_{t}$ with $r_{t} > \max\{R,M(R,f)\}$, satisfies $M(r_{t},f) > 2r_{t}$ for all $t \in \mathbb{N}$. Consequently, note that $(r_{t})$ is a strictly increasing sequence with $r_{t} \to \infty$ as $t \to \infty$. We now have the following result, which is based on \cite[Lemma~3.4]{Nicks1} and whose proof also follows similarly.

\begin{lem} \label{pitsMovement}
For each $t \in \mathbb{N}$ and $\lambda \geq 2T_{t}$, 
	\begin{equation*}
	A(r_{t},2\lambda) \subset f(A(r_{t},\lambda)).
	\end{equation*}
\end{lem}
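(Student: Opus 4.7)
The plan is to use a Brouwer degree argument to show $\mu(y, f, A(r_t, \lambda)) \geq 1$ for every generic $y \in A(r_t, 2\lambda)$, following the approach of Nicks in \cite[Lemma~3.4]{Nicks1} adapted to the annular setting. The seed of positive degree will come from Lemma~\ref{pitsCoverN}, and the pits effect (Lemma~\ref{pitsDefn2}) will control $f(S(\lambda))$ well enough that $A(r_t, 2\lambda)$ lies in a single component of $\mathbb{R}^d \setminus f(\partial A(r_t, \lambda))$ of positive degree.

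First I would verify $B_t \subset A(r_t, \lambda)$. Since $|x_t| = T_t$ and $\delta \leq 1/2$, the ball $B_t$ sits in the shell $\{ w : T_t/2 \leq |w| \leq 2T_t \}$. Combining $T_t > 2 r_t$ (arranged just before the lemma) with $\lambda \geq 2T_t$ gives $B_t \subset A(r_t, \lambda)$. Together with $B(0, 2T_t) \subset f(B_t)$ from Lemma~\ref{pitsCoverN}, this immediately handles the inner sub-annulus: $A(r_t, 2T_t) \subset B(0, 2T_t) \subset f(A(r_t, \lambda))$. It remains to cover $y$ with $2T_t \leq |y| < 2\lambda$.

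Fix such a $y$, generic in the sense that $y \notin f(\partial A(r_t, \lambda))$; the measure-zero exceptional set is dealt with at the end using openness of $f$. To establish $\mu(y, f, A(r_t, \lambda)) \geq 1$, I would connect $y$ through a path in $\mathbb{R}^d \setminus f(\partial A(r_t, \lambda))$ to some $y_0$ with $T_t < |y_0| < 2T_t$ and then invoke Theorem~\ref{DegreeProperties}(i) together with the non-negativity of Brouwer degree for quasiregular maps. The boundary image splits as $f(\partial A(r_t, \lambda)) = f(S(r_t)) \cup f(S(\lambda))$. By definition of $T_t$, $f(S(r_t)) \subset \overline{B(0, T_t)}$, so this piece is harmless inside the shell $\{ w : T_t < |w| < 2\lambda \}$. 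For $f(S(\lambda))$, I would apply Lemma~\ref{pitsDefn2} with $T = \lambda$, some $s > 1$, $\alpha \in (1,2)$, and $\epsilon > 0$ small, which is valid since $\lambda \geq 2T_t$ can be made as large as needed by taking $t$ large. This yields that $\{ x \in S(\lambda) : |f(x)| \leq 2\lambda \}$ is covered by $N$ balls of radius $\epsilon \lambda$, so $f(S(\lambda)) \cap B(0, 2\lambda)$ is the image of at most $N$ small pits.

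The main obstacle is showing that these pit images cannot disconnect $y$ from $y_0$ inside $\{ w : T_t < |w| < 2\lambda \}$; this is the delicate topological step in the argument. By choosing $\epsilon$ small enough uniformly and using standard distortion estimates for $K$-quasiregular maps, the pit images remain confined to small controlled subsets that leave the shell path-connected. Once this connectivity is in place, Theorem~\ref{DegreeProperties}(i) gives $\mu(y, f, A(r_t, \lambda)) = \mu(y_0, f, A(r_t, \lambda)) \geq \mu(y_0, f, B_t) \geq 1$, so $y \in f(A(r_t, \lambda))$. For the exceptional $y \in A(r_t, 2\lambda) \cap f(\partial A(r_t, \lambda))$, openness of $f$ and of $f(A(r_t, \lambda))$ give the inclusion by a small perturbation.
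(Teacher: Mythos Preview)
Your degree-theoretic framework is the right one and matches the approach the paper defers to in \cite[Lemma~3.4]{Nicks1}: seed positive degree from $B_t$, control the inner boundary via $M(r_t,f)=T_t$, and use Theorem~\ref{DegreeProperties}(i) to propagate. However, the step you flag as the ``main obstacle'' is a genuine gap, not a routine detail. The assertion that ``standard distortion estimates for $K$-quasiregular maps'' confine the pit images to small subsets of the shell is unjustified: a $K$-quasiregular map enjoys no uniform diameter bound on the image of a small ball. Local H\"older continuity gives $|f(x)-f(x')|\le C_\lambda|x-x'|^\alpha$ on $S(\lambda)$, but the constant $C_\lambda$ scales with $M(\lambda,f)\gg 2\lambda$, so the images of the pit caps may well be large connected subsets of $A(T_t,2\lambda)$ that separate $y$ from $y_0$. (A minor side issue: your phrase ``$\lambda$ can be made large by taking $t$ large'' is off, since the lemma must hold for every $t$; what you actually need is $2T_1>T_0$, which the paper allows you to arrange once $\epsilon$ is fixed.)

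The clean fix, and the point of the argument in \cite{Nicks1}, is to sidestep the pit images altogether by adjusting the outer radius rather than bounding the images. Apply Lemma~\ref{pitsDefn2} on a thick annulus such as $\overline{A(\lambda/2,\lambda)}$ with $\epsilon$ chosen small relative to $1/N$: the $N$ pit balls then occupy a set of radii of total length less than $\lambda/4$, so one can select $\rho\in[3\lambda/4,\lambda]$ with $S(\rho)$ disjoint from every pit ball. On $S(\rho)$ one has $|f|>(\lambda/2)^\alpha>2\lambda$ for large $\lambda$, hence $f(\partial A(r_t,\rho))\cap A(T_t,2\lambda)=\varnothing$. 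Now the entire shell $A(T_t,2\lambda)$ lies in a single component of $\mathbb{R}^d\setminus f(\partial A(r_t,\rho))$, on which the degree is at least $1$ by the $B_t$ seed; since $\rho\le\lambda$ this yields $A(T_t,2\lambda)\subset f(A(r_t,\rho))\subset f(A(r_t,\lambda))$, and combined with your $B(0,2T_t)\subset f(B_t)$ the lemma follows.
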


Using Lemma~\ref{pitsCoverN} and Lemma~\ref{pitsMovement}, we can appeal to Lemma~\ref{lem:HoldUp}, with $p =1$, to complete the proof of Theorem~\ref{MainTheorem} for mappings with finitely many poles that have the pits effect. With this in mind, we shall omit the superscripts and choose the sets $X_{m}$ for each $m$. 

Set $m_{1} = 1$ and inductively define $m_{t+1} = m_{t} + K_{t}$, where $K_{t} >1$ is the smallest integer such that $(3/2)T_{t+1} \leq 2^{K_{t}}T_{t}$. Now for each $m \in \mathbb{N}$, set 
\[ X_{m} = \left\{
\begin{array}{cl}
B_{t} & \text{ if } m = m_{t} \text{ for some } t \in \mathbb{N}; \\
A(r_{t}, 2^{m-m_{t}}T_{t}) & \text{ if } m \in (m_{t}, m_{t+1}).
\end{array}
\right.
\]

Firstly note that as $T_{t} \to \infty$ and $r_{t} \to \infty$ as $t \to \infty$, then \eqref{SmallestXToInfty} is satisfied. In addition, (X2) is satisfied due to \eqref{pitsHoldup} from Lemma~\ref{pitsCoverN}. Next as $T_{t}$ are large, then from Theorem~\ref{JuliaSetProperties}(i) we can assume that $B(0, 2T_{t}) \cap J(f) \neq \varnothing$. From this, \eqref{pitsHoldup} and Theorem~\ref{JuliaSetProperties}(iii) then imply that $B_{t} \cap J(f) \neq \varnothing$, so (X3) is satisfied. To show (X1) holds, we shall consider three cases:

\begin{enumerate}
\item[(1)] When $m = m_{t}$ for some $t \in \mathbb{N}$, then from \eqref{pitsHoldup},
	\begin{equation*}
	f(X_{m_{t}}) = f(B_{t}) \supset B(0, 2T_{t}) \supset A(r_{t}, 2T_{t}) = X_{m_{t} +1}.
	\end{equation*}
	
\item[(2)] When $m \in (m_{t}, m_{t+1} -1)$ for some $t \in \mathbb{N}$, then by Lemma~\ref{pitsMovement},
	\begin{equation*}
	f(X_{m}) = f(A(r_{t}, 2^{m-m_{t}}T_{t})) \supset A(r_{t}, 2^{m+1-m_{t}}T_{t}) = X_{m+1}.
	\end{equation*}
	
\item[(3)] When $m = m_{t+1} -1$ for some $t \in \mathbb{N}$, then by Lemma~\ref{pitsMovement},
	\begin{align*}
	f(X_{m}) &= f(A(r_{t}, 2^{m_{t+1} - 1 -m_{t}}T_{t})) \supset A(r_{t}, 2^{m_{t+1}-m_{t}}T_{t}) \\
	&= A(r_{t}, 2^{K_{t}}T_{t}) \supset A\left(r_{t}, \frac{3T_{t+1}}{2}\right).
	\end{align*}

Now since $T_{t+1} \geq T_{t} > 2r_{t}$ for all $t$, then  
	\begin{equation*}
	f(X_{m}) \supset A\left(r_{t}, \frac{3T_{t+1}}{2}\right) \supset A\left(\frac{T_{t+1}}{2}, \frac{3T_{t+1}}{2}\right) \supset B_{t+1} = X_{m+1}.
	\end{equation*}
\end{enumerate}

Finally, as all the hypotheses are satisfied, then an application of Lemma~\ref{lem:HoldUp} completes the proof of Theorem~\ref{MainTheorem} for mappings with finitely many poles that have the pits effect.

\subsection{Functions without the pits effect}
In this subsection, the main objective is to prove Theorem~\ref{MainTheorem} in the case where $f:\mathbb{R}^d \to \hat{\mathbb{R}}^d$ is a quasimeromorphic function of transcendental type with finitely many poles, whose restriction to a domain near the essential singularity is a quasiregular mapping that does not have the pits effect. This will be done by adapting the methods found in \cite[Section~3.4]{Nicks1}.

For $r>4R>0$, we shall first define domains $Q_{\ell}(r) \subset A(R, \infty)$. In the following, we use the notation $rX := \{rx: x \in X\}$.

Let $q \in \mathbb{N}$ and fix $2q$ distinct unit vectors $\hat{u_{1}}, \hat{u_{2}}, \dots, \hat{u_{2q}}$, so each $\hat{u_{\ell}}$ is such that $\hat{u_{\ell}} \in \mathbb{R}^d$ and $|\hat{u_{\ell}}| = 1$. Fix $\theta >0$ small enough so for all $\ell = 1,2,\dots,2q$, the truncated cones
\begin{equation*}
C_{\ell} = \left\{ x \in A\left(\frac{1}{4}, 2q+1\right) : \frac{\hat{u_{\ell}} \cdot x}{|x|} > \cos(\theta)\right\}
\end{equation*}
are such that $\overline{C_{\ell}} \cap \overline{C_{j}} = \varnothing$ for all pairs $\ell \neq j$, where $\hat{u_{\ell}} \cdot x$ is the scalar product.

Now for $r>4R$ and $\ell \in \{1,2,\dots,2q\}$, define
\begin{equation} \label{QirDefn}
Q_{\ell}(r) = A\left(\ell r, \left(\ell + \frac{1}{2}\right)r\right) \cup rC_{\ell}.
\end{equation}

A useful observation is that for all $\ell$ and $r$, $Q_{\ell}(r) = rQ_{\ell}(1)$ and that each $Q_{\ell}(1)$ is bounded away from infinity by the chordal metric.

By using a combinatorial argument, we can get a useful extension of Lemma~\ref{qmMontel}. Here, we shall state the result for a family of $K$-quasiregular mappings, however the proof is analogous in the quasimeromorphic case.

\begin{lem} \label{omitQirNormal}
Let $\mathcal{F}$ be a family of $K$-quasiregular mappings on a domain $X \subset \mathbb{R}^d$ and let $q_{0}$ be Rickman's quasiregular constant. Let $N \in \mathbb{N}$ and, for $i = 1,2,\dots,Nq_{0}$ and $n = 1,2,\dots, N$, let $A_{i,n}$ be bounded sets such that for each $n$, $\overline{A_{i,n}} \cap \overline{A_{j,n}} = \varnothing$ for all $i \neq j$.  

Suppose that every $g \in \mathcal{F}$ omits a value from each set $\mathcal{A}_{i} = \bigcup_{n=1}^N A_{i,n}$. Then $\mathcal{F}$ is a normal family on $X$.
\end{lem}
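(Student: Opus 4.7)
The plan is to apply the pigeonhole principle to each $g \in \mathcal{F}$ in order to extract $q_{0}$ omitted values that are uniformly separated in chordal metric, and then to split $\mathcal{F}$ into finitely many subfamilies on each of which Miniowitz's theorem (Lemma~\ref{qmMontel}) applies directly.

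First, for each $g \in \mathcal{F}$ and each $i \in \{1, \ldots, Nq_{0}\}$, the hypothesis yields an omitted value $a_{i}(g) \in \mathcal{A}_{i}$ together with an index $n_{i}(g) \in \{1, \ldots, N\}$ satisfying $a_{i}(g) \in A_{i, n_{i}(g)}$. Viewing $i \mapsto n_{i}(g)$ as distributing $Nq_{0}$ balls among $N$ boxes, the pigeonhole principle guarantees some $n^{\ast}(g) \in \{1, \ldots, N\}$ receives at least $q_{0}$ of the indices. Selecting any $q_{0}$ such indices gives $q_{0}$ omitted values $b_{1}(g), \ldots, b_{q_{0}}(g)$ lying in distinct sets of the common form $A_{i, n^{\ast}(g)}$; together with $\infty$, which every $K$-quasiregular mapping automatically omits, these furnish $\tilde{q}_{0} = q_{0} + 1$ omitted values of $g$.

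Next, I would partition $\mathcal{F}$ into the $N$ subfamilies $\mathcal{F}_{n} := \{g \in \mathcal{F} : n^{\ast}(g) = n\}$. For each fixed $n$, the closures $\overline{A_{1,n}}, \ldots, \overline{A_{Nq_{0},n}}$ form a finite family of pairwise disjoint compacta, so there is a uniform lower bound $\epsilon_{n} > 0$ both on the pairwise chordal distance between any two of them and on the chordal distance from their union to $\infty$ (the latter because the $A_{i,n}$ are bounded). Consequently, every $g \in \mathcal{F}_{n}$ omits the $\tilde{q}_{0}$ values $b_{1}(g), \ldots, b_{q_{0}}(g), \infty$ with pairwise chordal separation at least $\epsilon_{n}$, so Lemma~\ref{qmMontel} implies that $\mathcal{F}_{n}$ is a normal family on $X$.

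Finally, since $\mathcal{F} = \bigcup_{n=1}^{N} \mathcal{F}_{n}$ is a finite union of normal families, $\mathcal{F}$ itself is normal on $X$, as required. The entire argument reduces to pigeonhole combined with Miniowitz, and the only point requiring mild care is the uniform chordal separation constant $\epsilon_{n}$ within each subfamily; this is immediate from the boundedness of the $A_{i,n}$ together with the pairwise disjointness of their closures at a common level $n$.
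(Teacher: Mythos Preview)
Your proof is correct and follows essentially the same approach as the paper: pigeonhole to force $q_{0}$ omitted values at a common level $n$, then Miniowitz's theorem (Lemma~\ref{qmMontel}) with $\infty$ as the extra omitted point. The only difference is cosmetic: rather than partitioning $\mathcal{F}$ into subfamilies $\mathcal{F}_{n}$ and applying Lemma~\ref{qmMontel} to each, the paper simply takes $\epsilon = \min_{n} \epsilon_{n}$ and applies Lemma~\ref{qmMontel} once to all of $\mathcal{F}$, which avoids the (harmless but unnecessary) finite-union-of-normal-families step.
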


\begin{proof}
Fix an $N \in \mathbb{N}$ and for each $n \in \{1,2,\dots,N\}$, let $\epsilon_{n}>0$ be such that, for all $i \neq j$, 
	\begin{equation*}
	\dist(A_{i,n}, A_{j,n}) := \inf\{|a_{i} - a_{j}| : a_{i} \in A_{i,n}, a_{j} \in A_{j,n}\} \geq \epsilon_{n}.
	\end{equation*} 

Set $\epsilon = \min\{\epsilon_{n}: n = 1,2,\dots,N\}$ and consider any set $D = \{d_{1}, d_{2}, \dots, d_{Nq_{0}}\}$, where $d_{i} \in \mathcal{A}_{i}$ for each $i$. Since each $d_{i}$ must belong to one of the sets $A_{i,n}$ with $n \in \{1,2,\dots,N\}$, then by the pigeonhole principle there must exist some subset $\{\alpha_{1}, \alpha_{2}, \dots, \alpha_{q_{0}}\} \subset D$ such that $|\alpha_{k} - \alpha_{l}| \geq \epsilon$ for $k \neq l$. Now by considering Lemma~\ref{qmMontel} and noting that each of the $A_{i,n}$ are bounded away from infinity in the chordal metric, we conclude that $\mathcal{F}$ is a normal family on $X$.
\end{proof}

Note that in the above theorem, the result can be sharpened by asking that every mapping in $\mathcal{F}$ omits a value in at least $N(q_{0}-1)+1$ of the $\mathcal{A}_{i}$. We shall apply this lemma later with $N=2, A_{i,1}= A(i, i+ 1/2), A_{i,2} = C_{i}$ so that $\mathcal{A}_{i} = Q_{i}(1)$.

To find sets that satisfy the `hold-up' criterion, we will first introduce some notation. Following Rickman \cite[p.~80]{Rickman3}, using the Brouwer degree in \eqref{muDefn} we define
\begin{equation*}
AV(f,D) :=\frac{1}{\omega_{d}}\int_{\mathbb{R}^d}\frac{\mu(y, f,D)}{(1+|y|^2)^d}dy = \frac{1}{\omega_{d}}\int_{D}\frac{J_{f}(x)}{(1+|f(x)|^2)^d}dx,
\end{equation*} 
which is the average of $\mu(y,f,D)$ over all $y \in \hat{\mathbb{R}}^d$. Here $\omega_{d}$ denotes the surface area of the unit $d$-sphere $S^{d}(0,1)$. It should be noted that Rickman identifies $\hat{\mathbb{R}}^d$ with $\{x \in \mathbb{R}^{d+1} : |x-(1/2)e_{d+1}| = 1/2 \}$, where $e_{k}$ denotes the $k^{\text{th}}$ unit vector, while we use $\{x \in \mathbb{R}^{d+1} : |x| = 1 \}$. This accounts for the differing factor of $2^d$ in the above definition.

By utilising the average Brouwer degree, we can give a criterion which states that if we have sufficiently many bounded domains such that the image of each one covers many of the others, then the closure of each domain must intersect the Julia set. This is an extension of \cite[Lemma~2.5]{Nicks1} to the case of quasiregular mappings defined near an essential singularity.

\begin{lem} \label{JuliaSetIntersectionWithoutPits}
 Let $f: \mathbb{R}^d \to \hat{\mathbb{R}}^d$ be a $K$-quasimeromorphic mapping of transcendental type with at least one pole. Let $p \in \mathbb{N}$ be such that $p > K_{I}(f) + q_{0}$, where $q_{0}$ is Rickman's quasiregular constant. Suppose that $W_{1}, W_{2}, \dots, W_{p} \subset \mathbb{R}^d$ are bounded domains such that $\overline{W_{i}} \cap \overline{W_{j}} = \varnothing$ for all $i \neq j$, and for each $i \in \{1,2,\dots,p\}$,
\begin{equation*}
f(W_{i}) \supset W_{j} \text{ for at least } p-q_{0} \text{ values of } j\in \{1,2,\dots,p\}.
\end{equation*}
Then $\overline{W_{i}} \cap J(f) \neq \varnothing$ for all $i \in \{1,2,\dots,p\}$.
\end{lem}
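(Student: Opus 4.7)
The plan is to argue by contradiction: fix $i_0 \in \{1, \dots, p\}$ and suppose $\overline{W_{i_0}} \cap J(f) = \varnothing$. First I iterate the covering hypothesis to show, by induction on $n$, that the set $S_n := \{j : f^n(W_{i_0}) \supset W_j\}$ satisfies $|S_n| \geq p - q_0$ for every $n \geq 1$. The base case is the hypothesis; for the inductive step, each $j \in S_n$ gives $T_j := \{k : f(W_j) \supset W_k\}$ with $|\{1,\dots,p\} \setminus T_j| \leq q_0$, and the intersection $\bigcap_{j \in S_n}(\{1,\dots,p\} \setminus T_j)$, being the complement of $\bigcup_{j \in S_n} T_j \subset S_{n+1}$, has cardinality at most $q_0$, which gives $|S_{n+1}| \geq p - q_0$.

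Since $\overline{\mathcal{O}^-_f(\infty)} \subset J(f)$, the compact set $\overline{W_{i_0}}$ avoids every iterated pre-image of $\infty$, so each $f^n$ is quasiregular on a neighbourhood of $\overline{W_{i_0}}$. The complete invariance from Theorem~\ref{JuliaSetProperties}(iii) then gives $f^n(\overline{W_{i_0}}) \cap J(f) = \varnothing$ for every $n$, and hence $\overline{W_j} \cap J(f) = \varnothing$ for every $j \in \bigcup_n S_n$. By pigeonhole, some $j^* \in \{1, \dots, p\}$ lies in $S_n$ for infinitely many $n$; choosing $x^* \in W_{j^*} \setminus E(f)$ (possible since $E(f)$ is finite by Theorem~\ref{PicardTheorem}) yields, for each such $n$, a point $w_n \in W_{i_0}$ with $f^n(w_n) = x^*$, and the bounded sequence $(w_n)$ accumulates at some $\zeta \in \overline{W_{i_0}}$.

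The main obstacle is to now conclude that this $\zeta$ lies in $J(f)$, contradicting $\overline{W_{i_0}} \cap J(f) = \varnothing$. In the classical meromorphic setting Montel's theorem applied to the family of local inverse branches closes the argument, but here the iterates $f^n$ have dilatation $K^n$, so Lemma~\ref{qmMontel} cannot be applied to $(f^n)$ directly. The condition $p > K_I(f) + q_0$ is tailored for exactly this step: choosing a distinct $y_j \in W_j$ for each $j \in S_n$, Theorem~\ref{DegreeProperties} gives $\mu(y_j, f^n, W_{i_0}) \geq 1$ and hence a lower bound of order $p - q_0$ on the average Brouwer degree $AV(f^n, W_{i_0})$. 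I would then rescale $f^n$ near $\zeta$ into a family of uniformly $K$-quasimeromorphic mappings, extract a normal limit via Lemma~\ref{qmMontel}, and combine the $AV$ machinery with Theorem~\ref{PicardTheorem} (Rickman's Picard) and a Rickman-type value-distribution inequality to force an incompatibility between the covering lower bound $p - q_0$ and the value-distribution bound controlled by $K_I(f) + q_0$, which completes the contradiction.
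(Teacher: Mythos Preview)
Your proposal has a genuine gap at the crucial quantitative step. You correctly observe that $|S_n| \geq p - q_0$ for every $n$, but this is only a \emph{constant} lower bound: you are counting distinct indices $j$ covered by $f^n(W_{i_0})$. The whole point of the hypothesis $p > K_I(f) + q_0$ is that it makes $p - q_0$ strictly larger than $K_I(f)$, and this only bites if you produce an \emph{exponential} lower bound of order $(p-q_0)^n$ to compete with an exponential upper bound of order $K_I(f)^n$. The paper does exactly this: counting with multiplicity, each of the $(p-q_0)^{n-1}$ disjoint pieces of $W_{i_0}$ that $f^{n-1}$ maps onto some $W_j$ splits further into at least $p-q_0$ pieces under one more iterate, so there are at least $(p-q_0)^n$ pairwise disjoint subsets $V_m \subset W_{i_0}$ with $f^n(V_m) = W_{j(m)}$. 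Pigeonholing on the finitely many targets gives $\mu(y, f^n, W_{i_0}) \geq (p-q_0)^n/p$ for all $y$ in some fixed $W_j$, hence $AV(f^n, W_{i_0}) \geq C_1 (p-q_0)^n$.

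The accumulation-point argument via $w_n \to \zeta$ is a detour that does not close; as you note yourself, normality of $(f^n)$ is unavailable, and your final paragraph does not supply a substitute. The paper never tries to locate a specific Julia point. Instead it pairs the exponential lower bound above with an upper bound $AV(f^n, W_{i_0}) \leq C_2\, K_I(f)^n$ coming from \cite[Theorem~3.2]{Bergweiler6}: since $\overline{W_{i_0}} \cap J(f) = \varnothing$, each $x \in \overline{W_{i_0}}$ has a ball whose forward orbit omits an infinite set, and Theorem~\ref{EAndCapacity} upgrades this to an omitted set of positive capacity, which is exactly the input Bergweiler's estimate needs. Compactness of $\overline{W_{i_0}}$ then gives the global upper bound, and $(p-q_0)^n$ versus $K_I(f)^n$ yields the contradiction for large $n$. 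Your constant bound $p - q_0$ on $AV$ cannot contradict anything growing like $K_I(f)^n$, so the argument as written does not use the hypothesis $p > K_I(f) + q_0$ in any essential way.
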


\begin{proof}
Firstly, suppose that $J(f) \cap \overline{W_{i}} = \varnothing$ for some $i \in \{1,2,\dots,p\}$. Then $W_{i} \cap \mathcal{O}^{-}_{f}(\infty) = \varnothing$, so $f^n$ is $K$-quasiregular on $W_{i}$ for all $n \in \mathbb{N}$. Now note that for any $n \in \mathbb{N}$ then, counting multiplicity, $f^n(W_{i})$ covers at least $(p-q_{0})^n$ of the domains $W_{j}$, $j \in \{1,2,\dots,p\}$. By setting $\nu = (p-q_{0})^n$, there exist pairwise disjoint subsets $V_{1}, V_{2}, \dots, V_{\nu}$ of $W_{i}$ such that if $m \in \{1,2,\dots, \nu \}$, then $f^n(V_{m})=W_{j}$ for some $j \in \{1,2,\dots,p\}$. Hence for each $n \in \mathbb{N}$, there exists some $j \in \{1,2,\dots,p\}$ such that
\begin{equation*}
\mu(y,f^n, W_{i}) \geq \frac{\nu}{p} \text{ for all } y \in W_{j}.
\end{equation*}

This implies that there exists some constant $C_{1}>0$ such that for all $n \in \mathbb{N}$,
\begin{equation}\label{ALowerBound}
AV(f^n,W_{i}) \geq \frac{C_{1}\nu}{p}.
\end{equation}

Now as $J(f) \cap \overline{W_{i}} = \varnothing$, then for each $x \in \overline{W_{i}}$ there exists some $\delta_{x}>0$ such that $B(x, 2\delta_{x}) \cap J(f) = \varnothing$ and $\hat{\mathbb{R}}^d \setminus \mathcal{O}_{f}^{+}(B(x, 2\delta_{x}))$ is infinite. This means that there exists a non-exceptional point $y \in \hat{\mathbb{R}}^d \setminus \left(\mathcal{O}_{f}^{+}(B(x, 2\delta_{x})) \cup E(f)\right)$. Since $\hat{\mathbb{R}}^d \setminus \mathcal{O}_{f}^{+}(B(x, 2\delta_{x}))$ is closed, then $ \overline{\mathcal{O}_{f}^{-}(y)} \subset \hat{\mathbb{R}}^d \setminus \mathcal{O}_{f}^{+}(B(x, 2\delta_{x}))$. As $y \not \in E(f)$, it follows by Theorem~\ref{EAndCapacity} and the definition of the forward orbit that $\capacity\left(\hat{\mathbb{R}}^d \setminus \mathcal{O}_{f}^{+}(B(x, 2\delta_{x}))\right)>0$.

Using \cite[Theorem 3.2]{Bergweiler6} and \eqref{compositionDilatation}, for each $x \in \overline{W_{i}}$ there exists some constant $C_{x} >0$, dependent on $x$, such that for all $n \in \mathbb{N}$, 
\begin{equation*}
AV(f^n, B(x, \delta_{x})) \leq C_{x}K_{I}(f^n) \leq C_{x}K_{I}(f)^n.
\end{equation*}
As $\overline{W_{i}}$ is compact and the union of $B(x, \delta_{x})$ forms an open cover, then there exists a finite subcover of $\overline{W_{i}}$. Thus we get that there exists some constant $C_{2}>0$ such that
\begin{equation}\label{AUpperBound}
AV(f^n, W_{i}) \leq C_{2}K_{I}(f)^n.
\end{equation}

However as $p > K_{I}(f) + q_{0}$, then we get a contradiction from \eqref{ALowerBound} and \eqref{AUpperBound} when $n \in \mathbb{N}$ is large. The conclusion now follows.
\end{proof}

Now by appealing to Lemma~\ref{R'lemma} and Corollary~\ref{GrowthLimitTheoremCorollary}, throughout the remainder of Section~4.2 we assume without loss of generality that $R>0$ is sufficiently large such that the restriction $f:A(R,\infty) \to \mathbb{R}^d$ is a $K$-quasiregular mapping with an essential singularity at infinity that does not have the pits effect, and $M(r,f)$ is a strictly increasing function with $M(r,f)>r$ for all $r\geq R$. 

The covering result will be based on that given in \cite{Nicks1}; the proof follows analogously using the new growth condition of Theorem~\ref{GrowthLimitTheorem}.

\begin{lem} \label{coveringNoPits}
Let $q_{0}$ be Rickman's quasiregular constant and let $W_{1}, W_{2}, \dots, W_{q_{0}} \subset \mathbb{R}^d$ be bounded sets such that $\overline{W_{i}} \cap \overline{W_{j}} = \varnothing$ for all pairs $i \neq j$. Then for all sufficiently large $r$ and each $\ell = 1,2,\dots, 2q_{0}$, the following hold.
	\begin{enumerate}
	\item [\emph{(C1)}] There exists some $j \in \{1,2,\dots,2q_{0}\}$ such that $f(Q_{\ell}(r)) \supset Q_{j}(M(r,f))$; 
	\item [\emph{(C2)}] There exists some $k \in \{1,2,\dots,q_{0}\}$ such that $f(Q_{\ell}(r)) \supset M(r,f)W_{k}$. 
	\end{enumerate}
\end{lem}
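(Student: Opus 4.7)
The plan is to argue by contradiction after rescaling. For large $r$, set $g_r(x) := f(rx)/M(r,f)$; these are $K$-quasiregular on $A(R/r,\infty)$ and satisfy the normalisation $M(1,g_r)=1$. Since $Q_\ell(r) = r Q_\ell(1)$, the two desired inclusions become $g_r(Q_\ell(1)) \supset Q_j(1)$ and $g_r(Q_\ell(1)) \supset W_k$ respectively, so I would prove (C1) and (C2) separately, each by contradiction.

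For (C1), suppose there exist $\ell_0 \in \{1,\dots,2q_0\}$ and a sequence $r_n \to \infty$ such that, for every $j \in \{1,\dots,2q_0\}$, there is an omitted value $a_{j,n} \in Q_j(1) \setminus g_{r_n}(Q_{\ell_0}(1))$. Writing $Q_j(1) = A(j,j+1/2) \cup C_j$, an application of Lemma~\ref{omitQirNormal} with $N=2$, $A_{j,1}=A(j,j+1/2)$ and $A_{j,2}=C_j$ (the pairwise disjointness at each level is immediate from the radii for the annuli and the choice of $\theta$ for the cones) yields that $\{g_{r_n}|_{Q_{\ell_0}(1)}\}$ is normal. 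For (C2), an analogous setup gives the omission of a value from each $W_k$, $k=1,\dots,q_0$, and Lemma~\ref{omitQirNormal} with $N=1$ (using that quasiregular maps automatically omit $\infty$) again produces a normal family. In either case, passing to a subsequence yields a locally uniform limit $h$ on $Q_{\ell_0}(1)$, which is either $K$-quasiregular or identically $\infty$.

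I would rule out both limits. If $h\equiv\infty$, then $|g_{r_n}|\to\infty$ locally uniformly on $Q_{\ell_0}(1)$, which forces $\{x \in r_n Q_{\ell_0}(1) : |f(x)| \leq M(r_n,f)\}$ to lie in an arbitrarily thin neighbourhood of $\partial(r_n Q_{\ell_0}(1))$. Choosing $\alpha>1$ so that $r^\alpha \leq M(r,f)$ for large $r$ via Corollary~\ref{GrowthLimitTheoremCorollary}, this translates to a covering of $\{x \in \overline{A(T,sT)} : |f(x)| \leq T^\alpha\}$ by few balls of small radius, which by Lemma~\ref{pitsDefn2} is equivalent to $f$ having the pits effect, contradicting the hypothesis. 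If instead $h$ is $K$-quasiregular, then the normalisation $M(1,g_{r_n})=1$ combined with the growth dominance provided by Theorem~\ref{GrowthLimitTheorem} (so that $|g_{r_n}|$ becomes arbitrarily large on the outer portions of $Q_{\ell_0}(1)$) ensures $h$ is non-constant and hence open and discrete. A Brouwer degree argument via Theorem~\ref{DegreeProperties}(iii), together with local uniform convergence, then implies that $g_{r_n}(Q_{\ell_0}(1))$ must contain every compact subset of $h(Q_{\ell_0}(1))$ for large $n$; since Theorem~\ref{GrowthLimitTheorem} forces the image of $h$ to be unbounded, it must eventually contain each target $Q_j(1)$ or $W_k$, contradicting the failure hypothesis.

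The main obstacle is the $h\equiv\infty$ case: translating uniform divergence on the fixed region $Q_{\ell_0}(1)$ into the geometric non-covering statement defining the pits effect requires a delicate quantitative analysis, with the parameters $s,\alpha,\epsilon$ in Lemma~\ref{pitsDefn2} chosen to leverage the growth from Corollary~\ref{GrowthLimitTheoremCorollary}. The quasiregular-limit case is more routine, but still requires Theorem~\ref{GrowthLimitTheorem} to guarantee that $h(Q_{\ell_0}(1))$ is large enough to swallow the required target sets.
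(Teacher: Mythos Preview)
Your overall framework (rescale to $g_r$, assume omission of a point in each target, apply Lemma~\ref{omitQirNormal} for normality on $Q_{\ell_0}(1)$, then rule out all subsequential limits) is correct, but both of your limit-elimination arguments have genuine gaps. For $h\equiv\infty$: your derivation of the pits effect fails because that property must hold for \emph{all} large $T$, not merely along the sequence $r_n$, and because $r_nQ_{\ell_0}(1)$ is not a full annulus $\overline{A(T,sT)}$, so uniform divergence of $g_{r_n}$ on compacta of $Q_{\ell_0}(1)$ says nothing about the complementary part of $\overline{A(r_n,sr_n)}$. In fact the no-pits hypothesis plays no role in this lemma at all; it enters only in Lemma~\ref{holdUpNoPits}. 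For the finite quasiregular limit: even granting that $h(Q_{\ell_0}(1))$ is open and unbounded, nothing forces it to contain any particular $Q_j(1)$ or $W_k$, so your degree argument cannot produce the required contradiction.

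The intended argument, which is what the paper is pointing to in \cite{Nicks1} with Theorem~\ref{GrowthLimitTheorem} substituted for the global growth lemma, eliminates both limits directly from Theorem~\ref{GrowthLimitTheorem} by exploiting the two-piece geometry of $Q_{\ell_0}(1)$. The annulus $A(\ell_0,\ell_0+\tfrac12)\subset Q_{\ell_0}(1)$ contains a full sphere $S(\rho)$ with $\rho>1$, on which $M(\rho,g_{r_n})=M(\rho r_n,f)/M(r_n,f)\to\infty$; this already rules out any finite locally uniform limit. On the other hand the cone $C_{\ell_0}$ extends down to radius $1/4$, so at any fixed $x\in C_{\ell_0}$ with $|x|<1$ one has $|g_{r_n}(x)|\le M(r_n|x|,f)/M(r_n,f)\to 0$, which rules out the limit $\infty$. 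This is precisely why the sets $Q_\ell(r)$ are built from both an annular shell (forcing large values) and a cone reaching below radius $r$ (forcing small values): the combination makes \emph{every} normal limit impossible purely via Theorem~\ref{GrowthLimitTheorem}, with no appeal to the pits effect or degree theory.
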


The `hold-up' lemma we will use is also closely based on \cite[Section~3]{BN1} (see also \cite[Lemma~3.7]{Nicks1}). We omit the proof here, noting that the adapted proof uses Lemma~\ref{JuliaSetIntersectionWithoutPits} and Theorem~\ref{GrowthLimitTheorem}.

\begin{lem} \label{holdUpNoPits}
Let $q_{0}$ be Rickman's quasiregular constant. Then there exist bounded domains $W_{1}, W_{2}, \dots, W_{q_{0}} \subset \mathbb{R}^d$ with $\overline{W_{i}} \subset \left\{x \in \mathbb{R}^d : |x| \geq 1/2 \right\}$  satisfying $\overline{W_{i}} \cap \overline{W_{j}} = \varnothing$ for all pairs $i \neq j$, and a real sequence $T_{t} \to \infty$ with $T_{1}>4R$ such that for every $t \in \mathbb{N}$ and $\ell \in \{1,2,\dots, q_{0}\}$ the following hold:
	\begin{enumerate}
	\item [\emph{(C3)}] There exists some $j \in \{1,2,\dots,q_{0}\}$ such that $f(T_{t}W_{\ell}) \supset T_{t}W_{j}$; 
	\item [\emph{(C4)}] For each $\alpha \in [4R, M(T_{t},f)]$, there exists some $k \in \{1,2,\dots,2q_{0}\}$ such that $f(T_{t}W_{\ell}) \supset Q_{k}(\alpha)$;
	\item [\emph{(C5)}] $T_{t}\overline{W_{\ell}} \cap J(f) \neq \varnothing$.
	\end{enumerate}
\end{lem}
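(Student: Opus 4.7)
The plan follows the structure of \cite[Lemma~3.7]{Nicks1} but adapted to our setting where $f$ is only quasiregular in a neighbourhood of the essential singularity. The approach has three stages: construct the $W_\ell$ from the failure of the pits effect, verify the covering properties (C3) and (C4) via Theorem~\ref{GrowthLimitTheorem} combined with a Brouwer-degree argument, and verify (C5) using Lemma~\ref{JuliaSetIntersectionWithoutPits}.

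First, since $f$ does not have the pits effect, I would apply the negation of the definition with $N = q_0$ to obtain constants $s > 1$, $\epsilon > 0$ and an unbounded sequence $T_t \to \infty$ such that, for each $t$, the level set $\mathcal{E}_t := \{x \in \overline{A(T_t, sT_t)} : |f(x)| \leq 1\}$ cannot be covered by $q_0$ balls of radius $\epsilon T_t$. A maximal $\epsilon T_t$-separated subset of $\mathcal{E}_t$ then has at least $q_0 + 1$ elements; rescaling by $T_t$ places them in $\overline{A(1, s)}$, and passing to a subsequence in $t$ yields pairwise $\epsilon$-separated limits $z_0^*, z_1^*, \ldots, z_{q_0}^* \in \overline{A(1, s)}$. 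Define $W_\ell := B(z_\ell^*, \delta)$ for $\ell = 1, \ldots, q_0$, with $\delta > 0$ small enough that the $\overline{W_\ell}$ are pairwise disjoint, contained in $\{|x| \geq 1/2\}$, and disjoint from $\overline{B(z_0^*, \delta)}$. After discarding finitely many initial terms, each $T_t W_\ell$ contains a point $\xi_\ell^{(t)}$ with $|f(\xi_\ell^{(t)})| \leq 1$.

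Next, for (C3) and (C4), I would argue that $f(T_t W_\ell)$ contains an annulus $A(2, \eta_t)$ with $\eta_t$ much larger than $M(T_t, f)$. On one hand, the small-value site $\xi_\ell^{(t)}$ furnishes a point of $f(T_t W_\ell)$ of modulus at most $1$. On the other, applying Theorem~\ref{GrowthLimitTheorem} to a slightly enlarged region intersecting $T_t W_\ell$ forces the presence of points in $T_t W_\ell$ where $|f|$ greatly exceeds $M(T_t, f)$. A Brouwer-degree continuity argument along the lines of Lemma~\ref{pitsMovement}, using the small-value point as an effective inner radius, then yields the annular containment. Since each $T_t W_j \subset A(T_t/2, sT_t) \subset A(2, \eta_t)$ for large $t$, (C3) follows. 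Since each $Q_k(\alpha)$ with $\alpha \in [4R, M(T_t, f)]$ lies in $\overline{B(0, (2q_0 + 1)\alpha)} \subset \overline{B(0, (2q_0 + 1)M(T_t, f))}$, and this last ball lies inside $A(2, \eta_t)$ once $\eta_t$ is large enough, (C4) also follows.

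For (C5), I would invoke Lemma~\ref{JuliaSetIntersectionWithoutPits} applied to the enlarged family consisting of the $q_0$ sets $T_t W_\ell$ together with the $2q_0$ sets $Q_j(T_t)$, together with sufficiently many further $Q_j$ at the bootstrapped scale $M(T_t, f)$ so that the total number $p$ of domains exceeds $K_I(f) + q_0$ and so that the covering chains close up between scales. The annular containment from Stage~2 then ensures that each $f(T_t W_\ell)$ covers every other domain on the list, while (C1) and (C2) of Lemma~\ref{coveringNoPits} supply the required coverings for each $f(Q_j(\cdot))$. Lemma~\ref{JuliaSetIntersectionWithoutPits} then delivers $T_t \overline{W_\ell} \cap J(f) \neq \varnothing$. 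The main obstacle I anticipate lies in Stage~2: one must argue that $f(T_t W_\ell)$ is genuinely an annulus, not merely a connected open set reaching up to large modulus, by bridging the modulus range between $\xi_\ell^{(t)}$ and the very large values forced by Theorem~\ref{GrowthLimitTheorem} without allowing any intermediate spherical shell to be omitted; coupled with the book-keeping in Stage~3 needed to satisfy simultaneously $p > K_I(f) + q_0$ and the $p - q_0$ covering condition, this is the most technical part of the proof.
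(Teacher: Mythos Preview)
Your Stage~1 construction of the $W_\ell$ from the failure of the pits effect is essentially correct and matches \cite{BN1} and \cite{Nicks1}, and Stage~3 is on the right track, though it leans on Stage~2.

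The genuine gap is Stage~2. The claim that $f(T_t W_\ell)$ contains a full annulus $A(2,\eta_t)$ is too strong, and neither justification you offer works. First, Theorem~\ref{GrowthLimitTheorem} controls the maximum modulus on entire spheres $S(r)$: the point where $|f|$ attains $M(r,f)$ need not lie in the small ball $T_t W_\ell$, so nothing forces large values of $|f|$ anywhere in that particular ball. Second, even granting a large value there, a Brouwer-degree argument in the style of Lemma~\ref{pitsMovement} requires $f(\partial(T_t W_\ell))$ to avoid the target annulus, and you have no control over $f$ on the boundary sphere $T_t\,\partial W_\ell$. Connectedness of the image gives only a path between small and large modulus, not a full annular shell; indeed, the fact that (C3) and (C4) assert coverage of only \emph{some} $W_j$ or $Q_k(\alpha)$, not all of them, already signals that annular coverage is not what is being proved.

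The route in \cite{BN1} and \cite{Nicks1} proves (C3) and (C4) by contradiction via normality. If $f(T_t W_\ell)$ failed to cover every $T_t W_j$ (respectively every $Q_k(\alpha)$) along a subsequence of $t$, then the rescaled maps $x\mapsto f(T_t x)/T_t$ (respectively $x\mapsto f(T_t x)/\alpha$) on $W_\ell$ would each omit a value in $q_0$ (respectively $2q_0$) uniformly separated bounded sets, hence form a normal family by Lemma~\ref{qmMontel} (respectively Lemma~\ref{omitQirNormal}). Normality is then contradicted using the small-value point $\xi_\ell^{(t)}$ together with Theorem~\ref{GrowthLimitTheorem}; the growth result enters to rule out the possible limit functions, not to locate large values inside $T_t W_\ell$. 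This is a different mechanism from the topological one you propose, and Stage~2 should be rewritten along these lines.
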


Now using Lemma~\ref{coveringNoPits} and Lemma~\ref{holdUpNoPits}, we shall once again appeal to Lemma~\ref{lem:HoldUp} to complete the proof of Theorem~\ref{MainTheorem} for mappings with finitely many poles that do not have the pits effect. This closely follows the construction technique in \cite[Section~3.4]{Nicks1}. 

Recall that $R>0$ is sufficiently large such that $f: A(R, \infty) \to \mathbb{R}^d$ is a $K$-quasiregular mapping with an essential singularity at infinity and $M(r,f)$ is a strictly increasing function with $M(r,f)>r$ for all $r \geq R$. Now for $p \in \mathbb{N} \cup \{0\}$, we define the iterated maximum modulus $M^p(r,f)$ as follows. Set $M^{0}(r,f) = r$ and $M^{1}(r,f) = M(r,f)$. Then for $p \geq 2$, iteratively define
\begin{equation*}
M^{p}(r,f) = M(M^{p-1}(r,f),f).
\end{equation*}
We note that as $M(r,f)>r$ is strictly increasing on $r\geq R$, then $M^p(r,f)>r$ for all $p \in \mathbb{N} \cup \{0\}$, so these are well-defined for $f$.

Now towards the proof, first take a real sequence $T_{t} \to \infty$ and bounded domains $W_{1}, W_{2}, \dots, W_{q_{0}}$ as in Lemma~\ref{holdUpNoPits}. We may assume that $T_{t+1} > M(T_{t},f)$ and, by Corollary~\ref{GrowthLimitTheoremCorollary} for example, we may assume that $T_{1}>4R$ is large enough such that $M^{p}(T_{1},f) \to \infty$ as $p \to \infty$.
Then for each $t \in \mathbb{N}$, there exists a smallest integer $p_{t} \geq 2$ such that $M^{p_{t}}(T_{t},f) \geq T_{t+1}$. Now by our choice of $p_{t}$, we have that $M^{p_{t}-1}(r,f)$ is continuous in $r$ and
\begin{equation*}
M^{p_{t}-1}(T_{t},f) \leq T_{t+1} \leq M^{p_{t}}(T_{t},f) = M^{p_{t}-1}(M(T_{t},f),f).
\end{equation*}
Hence by the intermediate value theorem, for each $t \in \mathbb{N}$ there exists some $\Upsilon_{t} \in [T_{t}, M(T_{t},f)]$ such that $M^{p_{t}-1}(\Upsilon_{t},f) = T_{t+1}$

We now choose the sets $X^{(i)}_{m}$ for each $m \in \mathbb{N}$ and $i = 1,2,\dots,2q_{0}$ to satisfy Lemma~\ref{lem:HoldUp} with $p = 2q_{0}$. Set $m_{1} = 1$ and inductively define $m_{t+1} = m_{t} + p_{t}$, for $t \geq 1$. Now for each $m \in \mathbb{N}$ and for each $i = 1,2,\dots,2q_{0}$, set

\[ X^{(i)}_{m} = \left\{
\begin{array}{cl}
T_{t}W_{i} & \text{ if } m = m_{t} \text{ for some } t \in \mathbb{N}, i \leq q_{0}; \\
T_{t}W_{1} & \text{ if } m = m_{t} \text{ for some } t \in \mathbb{N}, i > q_{0}; \\
Q_{i}(M^{m - m_{t}-1}(\Upsilon_{t},f)) & \text{ if } m \in (m_{t}, m_{t+1}).
\end{array}
\right.
\]

Firstly note that as the $W_{i}$ and $T_{t}$ were chosen to be those from Lemma~\ref{holdUpNoPits}, then $T_{t}>4R$ for each $t \in \mathbb{N}$ and $W_{i} \subset \{x \in \mathbb{R}^d : |x| \geq 1/2 \}$ for each $i \in \{1,2,\dots,2q_{0}\}$. This means that
\begin{equation*}
\inf\{|x|: x \in X_{m_{t}}\} = \inf \left\{ |x| : x \in \bigcup^{2q_{0}}_{i=1}T_{t}W_{i} \right\} \geq \frac{T_{t}}{2}.
\end{equation*}

Also by the definition of $Q_{i}(r)$, then for $m \in (m_{t}, m_{t+1})$ we have
\begin{equation*}
\inf\{|x|: x \in X_{m}\} = \frac{M^{m - m_{t}-1}(\Upsilon_{t},f)}{4} \geq \frac{\Upsilon_{t}}{4} \geq \frac{T_{t}}{4}.
\end{equation*}

Since $T_{t} \to \infty$ as $t \to \infty$, then \eqref{SmallestXToInfty} is satisfied. Further, observe that (X2) and (X3) are satisfied due to (C3) and (C5) from Lemma~\ref{holdUpNoPits} respectively. Finally (X1) follows from (C1) and (C2) from Lemma~\ref{coveringNoPits}, and (C4) from Lemma~\ref{holdUpNoPits}; see \cite{Nicks1} for details.

As all the hypotheses are satisfied, then an application of Lemma~\ref{lem:HoldUp} completes the proof of Theorem~\ref{MainTheorem} for mappings with finitely many poles that do not have the pits effect.

\subsection{A covering result for functions without the pits effect}

Let $f: \mathbb{R}^d \to \hat{\mathbb{R}}^d$ be a quasimeromorphic function without the pits effect as in Section~4.2. By continuing to adopt the notation as in Section~4.2, we shall give a useful covering result regarding the sets $T_{t}W_{j}$ for use in Section~6.

\begin{lem}\label{BU(f)(ii)}
For $t \in \mathbb{N}$ and $1 \leq j \leq q_{0}$, let $T_{t}$ and $W_{j}$ be those from the proof of Theorem~\ref{MainTheorem}. By moving to a subsequence of $(T_{t})$, there exist constants $i_{0},j_{0} \in \{1,2,\dots,q_{0}\}$ and $d \in \mathbb{N}$ such that for each $t \in \mathbb{N}$, there is some $c_{t} \in \mathbb{N}$, some subset $Y_{t} \subset T_{2}W_{j_{0}}$ and some subset $Z_{t} \subset T_{t+2}W_{i_{0}}$ where
\begin{equation*}
f^{c_{t}}(Y_{t}) \supset T_{t+2}W_{i_{0}} \text{ and } f^{d}(Z_{t}) \supset T_{2}W_{j_{0}}.
\end{equation*}
\end{lem}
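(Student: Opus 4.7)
The strategy combines a short backward cover via properties (C4) and (C2) with a longer forward chain inherited from the construction in the proof of Theorem~\ref{MainTheorem}, then reconciles the endpoint indices via repeated pigeonhole on subsequences of $(T_t)$.

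For the backward step, I would first select $r^* \geq 4R$ such that $M(r^*,f) = T_2$. Such an $r^*$ exists by the intermediate value theorem, since $M(\cdot,f)$ is continuous and strictly increasing with unbounded range, and since $T_2 > M(T_1,f) > M(4R,f)$ by the construction of $(T_t)$. For each $i \in \{1,\dots,q_0\}$ and each $t$, property (C4) of Lemma~\ref{holdUpNoPits} applied to $T_{t+2}W_i$ with $\alpha = r^*$ gives some $k_t(i) \in \{1,\dots,2q_0\}$ with $f(T_{t+2}W_i) \supset Q_{k_t(i)}(r^*)$. A diagonal pigeonhole over $t$ and over the finite index set $\{1,\dots,q_0\}$ yields an infinite subsequence of $(T_t)$ along which $k_t(i) = k(i)$ is independent of $t$. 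Then (C2) of Lemma~\ref{coveringNoPits} applied to $Q_{k(i)}(r^*)$ produces $G(i) \in \{1,\dots,q_0\}$ with $f(Q_{k(i)}(r^*)) \supset M(r^*,f)W_{G(i)} = T_2 W_{G(i)}$, so that $f^2(T_{t+2}W_i) \supset T_2 W_{G(i)}$ along the subsequence. This yields the backward cover in two iterations.

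For the forward step, I would reuse the chain construction from Section 4.2 used to prove Theorem~\ref{MainTheorem}: iterative application of (C1)--(C4) starting from $T_2 W_j$ produces, for each $t$, some $F_t(j) \in \{1,\dots,q_0\}$ and some $c_t(j) \in \mathbb{N}$ with $f^{c_t(j)}(T_2 W_j) \supset T_{t+2} W_{F_t(j)}$. A further diagonal pigeonhole, over $t$ restricted to the earlier subsequence and over the finitely many starting indices $j$, yields a further subsequence along which $F_t(j) = F(j)$ is constant in $t$.

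The main obstacle is matching the endpoint indices: we need $(i_0,j_0)$ with $j_0 = G(i_0)$ and $i_0 = F(j_0)$, i.e., $i_0$ must be a fixed point of the self-map $F \circ G$ of $\{1,\dots,q_0\}$. Finiteness only guarantees a periodic orbit, so I would use property (C3) of Lemma~\ref{holdUpNoPits} to pad the chains: since $f(T_s W_\ell) \supset T_s W_{\sigma_s(\ell)}$ for self-maps $\sigma_s$ of $\{1,\dots,q_0\}$, and since there are only finitely many such maps, an additional pigeonhole over $s$ yields a subsequence on which $\sigma_s = \sigma$ is a common self-map. Padding the backward chain by $N$ further iterations (absorbed into $d = 2+N$) at scale $T_2$ and the forward chain by extra iterations at scale $T_{t+2}$ (absorbed into $c_t$) allows the endpoint to be shifted along $\sigma$-orbits. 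Choosing $N$ to be a common multiple of the orbit lengths in the recurrent set of $\sigma$ makes the $N$-fold adjustment act as the identity on this recurrent set, so that the pigeonhole cycle of $F \circ G$ on the recurrent set furnishes the required fixed pair $(i_0,j_0)$. Setting $Y_t \subset T_2 W_{j_0}$ and $Z_t \subset T_{t+2} W_{i_0}$ as appropriate preimages of the covered sets then completes the proof.
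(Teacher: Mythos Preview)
Your forward and backward building blocks match the paper's exactly: the chain from Section~4.2 gives $f^{c_t(j)}(T_2 W_j)\supset T_{t+2}W_{F_t(j)}$, and (C4) followed by (C2) at the scale $r^*$ with $M(r^*,f)=T_2$ gives $f^2(T_{t+2}W_i)\supset T_2 W_{G_t(i)}$; successive pigeonholes make $F$ and $G$ independent of $t$. You also correctly isolate the obstruction: closing up requires a fixed point of the self-map $F\circ G$ (or $G\circ F$) of $\{1,\dots,q_0\}$, while finiteness only supplies a cycle.

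The gap is in your proposed resolution. Padding via (C3) replaces $F\circ G$ by $\sigma^{N}\circ G\circ\sigma^{N'}\circ F$; this is still a self-map of a finite set and still only guaranteed a periodic orbit, not a fixed point. Making $\sigma^{N}$ the identity on the recurrent set of $\sigma$ does nothing for $F\circ G$, since $F$ and $G$ need not preserve that set, and the phrase ``the pigeonhole cycle of $F\circ G$ on the recurrent set furnishes the required fixed pair'' is a non-sequitur: a cycle is not a fixed point, and no amount of $\sigma$-padding converts one into the other while keeping $d$ constant.

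The paper's resolution is simpler and avoids (C3) entirely: it traverses the cycle itself, but using the forward chain at a \emph{fixed} target scale. Concretely, iterate the composite $T_2W_{\ell_n}\xrightarrow{\,c_{\ell_n,1}\,}T_3W_{i_{\ell_n}}\xrightarrow{\,2\,}T_2W_{\ell_{n+1}}$ (forward to $T_3$, then back via (C4)+(C2)); this defines $\ell_n\mapsto\ell_{n+1}$ on $\{1,\dots,q_0\}$, and pigeonhole gives $\ell_{n_1}=\ell_{n_2}$. Take $j_0=\ell_{n_1}$ and $i_0=i_{\ell_{n_1}}$. The forward step $c_t$ is your chain $T_2W_{j_0}\to T_{t+2}W_{i_0}$. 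The return step first drops $T_{t+2}W_{i_0}\xrightarrow{\,2\,}T_2W_{\ell_{n_1+1}}$ and then runs the fixed-scale loop around the cycle $\ell_{n_1+1}\to\ell_{n_1+2}\to\cdots\to\ell_{n_2}=\ell_{n_1}=j_0$, giving
\[
d \;=\; 2(n_2-n_1)\;+\;\sum_{k=n_1+1}^{n_2-1} c_{\ell_k,1},
\]
which is manifestly independent of $t$ because every intermediate forward hop targets $T_3$. This is the idea your argument is missing: the constant $d$ is obtained not by padding at a single scale, but by concatenating full forward--backward loops through a fixed auxiliary scale.
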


\begin{proof}
Let $j \in \{1,2,\dots,q_{0}\}$. By the construction after Lemma~\ref{holdUpNoPits}, it follows that for all $t \in \mathbb{N}$ there exists some $i_{j,t} \in \{1,2,\dots,q_{0}\}$, some $c_{j,t} \in \mathbb{N}$ and some subset $Y_{j,t} \subset T_{2}W_{j}$ such that
\begin{equation*}
f^{c_{j,t}}(Y_{j,t}) \supset T_{t+2}W_{i_{j,t}}.
\end{equation*}

Since $i_{j,t}$ can only take values from a finite set, then by taking a suitable subsequence of $T_{t}$ and relabelling we can assume that $i_{j}=i_{j,t}$ is independent of $t \in \mathbb{N}$, so
\begin{equation}\label{(ii)Main1}
f^{c_{j,t}}(Y_{j,t}) \supset T_{t+2}W_{i_{j}}.
\end{equation}

Next, observe that as $T_{2}>M(T_{1},f)$, then there exists some $\alpha>T_{1}>4R$ such that $M(\alpha,f)=T_{2}$. Then by (C4) from  Lemma~\ref{holdUpNoPits}, for all $t \in \mathbb{N}$ there exists some $N_{j,t} \in \{1,2,\dots, 2q_{0}\}$ such that $f(T_{t+2}W_{i_{j}}) \supset Q_{N_{j,t}}(\alpha)$. As $N_{j,t}$ can only take values from a finite set, then by taking another suitable subsequence of $T_{t}$ and relabelling, we can assume that $N_{j}=N_{j,t}$ is independent of $t$. This means that for all $t \in \mathbb{N}$,
\begin{equation}\label{QCover1}
f(T_{t+2}W_{i_{j}}) \supset Q_{N_{j}}(\alpha).
\end{equation}

Applying (C2) from Lemma~\ref{coveringNoPits}, we get that there exists some $\ell\in \{1,2,\dots,q_{0}\}$ such that
\begin{equation}\label{QCover3}
f(Q_{N_{j}}(\alpha)) \supset M(\alpha,f)W_{\ell} = T_{2}W_{\ell}.
\end{equation}

Combining \eqref{QCover1} and \eqref{QCover3}, it follows that for each $t \in \mathbb{N}$ there exists some subset $P_{j,t} \subset T_{t+2}W_{i_{j}}$ and some $\ell \in \{1,2,\dots,q_{0}\}$ such that 
\begin{equation}\label{(ii)Main2}
f^{2}(P_{j,t}) \supset T_{2}W_{\ell}.
\end{equation}

By repeatedly applying the whole argument above, we can build a sequence of subscripts $(\ell_{n})$ as follows. Set $\ell_{1}=1$. Then for each $n \geq 1$, let $\ell_{n+1}$ be the value of $\ell$ from \eqref{(ii)Main2} after applying the argument once to $T_{2}W_{\ell_{n}}$. 

As $\ell_{n} \in \{1,2,\dots,q_{0}\}$ for all $n \in \mathbb{N}$, then there will exist some smallest values $n_{1}, n_{2} \in \mathbb{N}$, with $n_{1}<n_{2}$, such that $\ell_{n_{1}}=\ell_{n_{2}}$. Using this, we set $j_{0}=n_{1}$ and $i_{0}=i_{n_{1}}$. Then for each $t \in \mathbb{N}$, set $Y_{t} = Y_{n_{1},t}$, set $c_{t}=c_{n_{1},t}$ and $d = 2(n_{2}-n_{1}) + \sum_{j=n_{1}+1}^{n_{2}-1}c_{j,1}$. It follows that there is some subset $Z_{t} \subset T_{t+2}W_{i_{n_{1}}}$ such that
\begin{equation*}
f^{c_{t}}(Y_{t}) \supset T_{t+2}W_{i_{n_{1}}} \text{ and } f^{d}(Z_{t}) \supset T_{2}W_{n_{1}}
\end{equation*}
as required.
\end{proof}

\section{Proof of Theorem~\ref{MainTheorem}: Infinitely many poles}

In the case where $f$ has an infinite number of poles, it makes sense to utilise the neighbourhoods of the poles as a means of naturally approaching infinity. To this end, we seek a point that is able to `pole-hop' between each neighbourhood and is able to return to the same neighbourhood after a finite number of steps via bounded sets. This idea is similar to that used by Rippon and Stallard in \cite[Section~4]{RS5}, however the execution is quite different as it does not rely on the Ahlfors five island theorem. 

The `pole-hop' method creates a different situation to that found in the case of finitely many poles, where instead we relied on finding a point that could move forward at any time from any set. To achieve this modified `hold-up' condition, we need to establish a different version of Lemma~\ref{lem:HoldUp}. 

For $m \in \mathbb{N}$ and some fixed $p \in \mathbb{N}$, we shall denote the residue $m \pmod p \in \{0,1,2,\dots,p-1\}$ as $[m]_{p}$. 

\begin{lem} \label{lem:HoldUp2}
	Let $f:\mathbb{R}^d \to \hat{\mathbb{R}}^d$ be a quasimeromorphic function of transcendental type with at least one pole. Let $p \in \mathbb{N}$ and for $m \in \mathbb{N}$ and $i \in \{1,2,\dots, p\}$, let $X_{m}^{(i)} \subset \mathbb{R}^d$ be non-empty bounded sets, with $X_{m} = \bigcup_{i=1}^{p} X_{m}^{(i)}$, such that
	\begin{equation} \label{SmallestXToInfty2}
	\inf\{|x| : x \in X_{m} \} \to \infty \text{ as } m \to \infty.
	\end{equation}
	
	Suppose further that
	\begin{enumerate}
	\item [\emph{(X4)}] for all $m \in \mathbb{N}$, $f\left(X_{m}^{(1)} \right) \supset X_{m+1}^{(1)}$,
	\end{enumerate}
	and there exists a strictly increasing sequence of integers $(m_{t})$ such that 
	\begin{enumerate}
	\item [\emph{(X5)}] for all $t \in \mathbb{N}$ and $i \in \{1,2,\dots, p\}$, $f\left(X_{m_{t}}^{(i)} \right) \supset X_{m_{t}}^{([i]_{p} + 1)}$, and
	\item [\emph{(X6)}] for all $t \in \mathbb{N}$ and $i \in \{1,2,\dots, p\}$, $\overline{X_{m_{t}}^{(i)}} \cap J(f) \neq \varnothing$.
	\end{enumerate}
	
	Then given any positive sequence $a_n \to \infty$, there exists $\zeta \in J(f)$ and $N_{1} \in \mathbb{N}$ such that $|f^n(\zeta)| \to \infty$ as $n \to \infty$, while also $|f^n(\zeta)| \leq a_n$ whenever $n \geq N_{1}$.
\end{lem}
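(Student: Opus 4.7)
I will construct a schedule of target sets $F_n$ with $F_0 = X_{m_1}^{(1)}$ so that $f(F_n) \supset F_{n+1}$ for every $n$, and then apply Lemma~\ref{NestedSeqn} with $\ell_n = 1$ and $G_n = F_n$. This will deliver a point $\zeta \in \overline{F_0}$ with $f^n(\zeta) \in \overline{F_n}$ for all $n \geq 0$. If in addition $\overline{F_n} \cap J(f) \neq \varnothing$ for a subsequence of $n$, then the second part of Lemma~\ref{NestedSeqn} places $\zeta$ in $J(f)$. Two further requirements on the schedule encode the dynamical conclusion: that $\inf\{|x| : x \in F_n\}\to\infty$ (giving $|f^n(\zeta)|\to\infty$ via \eqref{SmallestXToInfty2}), and that $\sup\{|x| : x \in F_n\} \leq a_n$ for every $n$ beyond some $N_1$.

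\textbf{Structure of the schedule.} The novelty compared with Lemma~\ref{lem:HoldUp} is that (X4) advances the $m$-index only when $\sigma = 1$, whereas (X5) constrains us to the rigid cyclic permutation $X_{m_t}^{(i)}\mapsto X_{m_t}^{([i]_p+1)}$ of period $p$. I therefore alternate climbing phases, during which $F_n$ traverses $X_{m_t+1}^{(1)}, \ldots, X_{m_{t+1}}^{(1)}$ one step per iteration using (X4), with cycling phases of length $pc_t$ at each checkpoint $m_t$, during which $F_n$ runs through $X_{m_t}^{(2)}, \ldots, X_{m_t}^{(p)}, X_{m_t}^{(1)}$ repeatedly for $c_t$ laps using (X5). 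Each complete lap takes $p$ steps and returns to $X_{m_t}^{(1)}$, so after the cycling one may cleanly resume climbing. The cycling subsequence provides infinitely many indices $n$ for which $F_n$ is one of the sets $X_{m_t}^{(i)}$, whose closure meets $J(f)$ by (X6).

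\textbf{Bounds and choice of $c_t$.} Each $X_m^{(i)}$ is bounded, so for each $t$ the quantity
\begin{equation*}
B_t := \max\left\{\max_{1 \leq i \leq p} \sup\{|x|:x\in X_{m_t}^{(i)}\},\ \max_{1\leq j\leq m_{t+1}-m_t}\sup\{|x|:x\in X_{m_t+j}^{(1)}\}\right\}
\end{equation*}
is finite. Since $a_n\to\infty$, I inductively choose an integer $c_t \geq 0$ large enough that $a_n \geq B_t$ for every $n$ in the window $[N_t, N_{t+1}]$, where $N_{t+1} = N_t + pc_t + (m_{t+1}-m_t)$. By construction $\sup\{|x| : x \in F_n\} \leq B_t$ on this window, so $|f^n(\zeta)| \leq a_n$ throughout; meanwhile the infimum modulus diverges with $m_t$, delivering $|f^n(\zeta)| \to \infty$ via \eqref{SmallestXToInfty2}. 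Verifying the hypotheses of Lemma~\ref{NestedSeqn} is then routine ($G_n = F_n$ is bounded and $f$ is quasimeromorphic on $\mathbb{R}^d$), and the lemma produces the required $\zeta \in J(f)$.

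\textbf{Main obstacle.} The key subtlety is that (X5) only permits waiting in rigid units of $p$ steps and only at the checkpoints $m_t$: there is no freedom to pause mid-orbit or to stop partway through a cycle. The inductive choice of $c_t$ must therefore absorb the full growth of $\sup |X_m|$ across the next climbing window in a single lump, using only the facts that $a_n\to\infty$ and that each window contains finitely many sets. This rigidity, absent from Lemma~\ref{lem:HoldUp}, is the one point requiring care; once it is handled, the remaining bookkeeping mirrors that of the earlier holding-up argument.
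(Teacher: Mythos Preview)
Your strategy is essentially the paper's: build a schedule alternating ``cycling'' phases at the checkpoints $m_t$ (via (X5)) with ``climbing'' phases along the $X_m^{(1)}$ (via (X4)), then invoke Lemma~\ref{NestedSeqn}. One pleasant difference is that by starting the schedule at $n=0$ you obtain $\zeta\in J(f)\cap\overline{F_0}$ directly, whereas the paper starts the schedule at $n=N_1$, extracts a point $\zeta_{N_1}$, and then pulls it back $N_1$ steps using Rickman--Picard (Theorem~\ref{PicardTheorem}) together with invariance of $J(f)$; your version sidesteps that detour and the accompanying ``without loss of generality $\zeta_{N_1}\notin E(f)$'' manoeuvre.

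There is, however, a real indexing slip in how you choose $c_t$. You say $c_t$ should be taken ``large enough that $a_n\ge B_t$ for every $n\in[N_t,N_{t+1}]$'', but $N_t$ is already fixed before $c_t$ is chosen, and enlarging $c_t$ only \emph{lengthens} this window; it cannot remedy a failure of $a_n\ge B_t$ near $n=N_t$. The actual job of $c_t$ is to push $N_{t+1}$ far enough out that $a_n\ge B_{t+1}$ for all $n\ge N_{t+1}$, thereby preparing the \emph{next} window. This is exactly how the paper organises it: it first fixes thresholds $N_t$ with $a_n\ge\gamma_{m_t}$ for all $n\ge N_t$ (where $\gamma_m=\sup\{|x|:x\in\bigcup_{j\le m}X_j\}$), and only then defines the schedule, cycling at $m_t$ until the clock passes $N_{t+1}$ before climbing. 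With this one-step shift in bookkeeping your argument goes through; all other ingredients are sound.
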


\begin{proof}
	Define an increasing real sequence $(\gamma_{m})$ by
	\begin{equation} \label{gammaDefn2}
	\gamma_{m} = \sup\left\{|x| : x \in \bigcup_{j=1}^{m} X_{j} \right\}
	\end{equation} 
	
	Since $a_{n} \to \infty$, then we can define a strictly increasing sequence of integers $N_{t}$ such that $\gamma_{m_{t}} \leq a_{n}$ for all $n \geq N_{t}$.

We shall now inductively define sets $F_{n}$, with $n \geq N_{1}$. Set $F_{N_{1}} = X^{(1)}_{m_{1}}$ and for each integer $n \geq N_{1}$, define 

\[
 F_{n+1} = 
  \begin{cases} 
   X_{m}^{([i]_{p} +1)} 		& \text{if } F_{n} = X_{m}^{(i)}, i \neq 1 \\
   X_{m +1}^{(1)}      			& \text{if } F_{n} = X_{m}^{(1)}, m \neq m_{t} \\
   X_{m_{t} +1}^{(1)}			& \text{if } F_{n} = X_{m_{t}}^{(1)}, n \geq N_{t+1} \\
   X_{m_{t}}^{([1]_{p} +1)}     & \text{if } F_{n} = X_{m_{t}}^{(1)}, n < N_{t+1}.
  \end{cases}
\]

Firstly, observe that if $F_{n} = X_{m}^{(i)}$ with $i \neq 1$, then $m=m_{t}$ for some $t \in \mathbb{N}$. For supposing otherwise, then by construction there exists some natural number $1 \leq k <p$ such that $F_{n-k} = X_{m}^{(1)}$. If $m \neq m_{t}$ for any $t \in \mathbb{N}$, it follows that $F_{n-k+1} = X_{m+1}^{(1)}$. However, this is a contradiction since $n-k+1\leq n$ and $F_{n} = X_{m}^{(i)}$, but $m+1>m$. 

Now it follows from the construction, (X4) and (X5) that for each $n \geq N_{1}$, then $f(F_{n}) \supset F_{n+1}$. From this, together with (X6), then by Lemma~\ref{NestedSeqn} there exists a point $\zeta_{N_{1}} \in J(f) \setminus \{\infty\}$ such that $f^{n-N_{1}}(\zeta_{N_{1}}) \in \overline{F_{n}}$ for all $n \geq N_{1}$. 

Without loss of generality, we may assume that $\zeta_{N_{1}} \not \in E(f)$. By applying Theorem~\ref{PicardTheorem} finitely many times and noting Theorem~\ref{JuliaSetProperties}(iii), it follows that there exists $\zeta \in J(f)$ such that $f^{N_{1}}(\zeta) = \zeta_{N_{1}}$. Therefore we have that $f^{n}(\zeta) \in \overline{F_{n}}$ for all $n \geq N_{1}$. Further, by \eqref{SmallestXToInfty2} we have that $|f^n(\zeta)| \to \infty$ as $n \to \infty$.

To complete the proof, it remains to show that for all $n \geq N_{1}$, then $|f^{n}(\zeta)| \leq a_{n}$. Indeed, let $n \geq N_{1}$ be such that $F_{n} = X_{m_{1}}^{(i)}$ for some $i \in \{1,2,\dots,p\}$. Then $F_{n} \subset X_{m_{1}}$ and so by \eqref{gammaDefn2} and the definition of $N_{1}$,
\begin{equation}\label{supPart1}
\sup\{|x|: x \in F_{n}\} \leq \gamma_{m_{1}} \leq a_{n}.
\end{equation}

We next aim to prove the following claim. Suppose that $n >N_{1}$ and $t \in \mathbb{N}$ are such that $m_{1} \leq m_{t} < m \leq m_{t+1}$ and $F_{n} = X_{m}^{(i)}$ for some $i \in \{1,2,\dots,p\}$. Then $n \geq N_{t+1}$.

Indeed, if $i \neq 1$, then by a previous observation we must have $m=m_{t+1}$. This means there exists some natural number $k<p$ such that $F_{n-k}=X_{m}^{(1)}$ and $n-k> N_{1}$. Hence for any $i \in \{1,2,\dots,p\}$, there exists some $N_{1}< n_{1}\leq n$ such that $F_{n_{1}} = X_{m}^{(1)}$.

It follows by construction that either $F_{n_{1}-1} = X_{m-1}^{(1)}$ or $F_{n_{1}-1} = X_{m}^{(p)}$, where the latter case occurs only if $m=m_{t+1}$. As $m>m_{1}$, then by applying the above argument finitely many times, there must exist some integer $r \geq 0$ such that $F_{n_{1}-rp} = X_{m}^{(1)}$ and $F_{n_{1}-rp-1} = X_{m-1}^{(1)}$. It should be noted here that $n_{1}-rp >N_{1}$ as $m > m_{1}$. Hence there exists some $N_{1}<n_{2}\leq n_{1}$ such that $F_{n_{2}} = X_{m}^{(1)}$ and $F_{n_{2}-1} = X_{m-1}^{(1)}$.

As $F_{n_{2}} = X_{m}^{(1)}$ and $F_{n_{2}-1} = X_{m-1}^{(1)}$, then one of two cases may arise. If $m-1=m_{t}$, then this can only happen if $n_{2} \geq N_{t+1}$ by construction. Hence in this case, $n \geq N_{t+1}$.

If $m-1 \neq m_{t}$, then by construction we can find some $N_{1} \leq n_{3} < n_{2}$ such that $F_{n_{3}} = X_{m_{t}}^{(1)}$ and $F_{n_{3}+1} = X_{m_{t}+1}^{(1)}$. However, this can only happen if $n_{3} \geq N_{t+1}$, so $n \geq N_{t+1}$ in this case; this proves the claim.

Now let $m,n$ and $t$ be as in the claim, so that $m_{1} \leq m_{t} < m \leq m_{t+1}$ and $F_{n} = X_{m}^{(i)}$ for some $i \in \{1,2,\dots,p\}$. Since $m \leq m_{t+1}$, then we have 
\begin{equation*}
F_{n} \subset \bigcup_{k=1}^{m_{t+1}}X_{k}.
\end{equation*}

Hence by \eqref{gammaDefn2}, the definition of $N_{t+1}$ and the fact that $n \geq N_{t+1}$, it follows that
\begin{equation}\label{supPart2}
\sup\{|x|: x \in F_{n}\} \leq \gamma_{m_{t+1}}\leq a_{n}.
\end{equation}

Finally, since for all $n \geq N_{1}$ we have $F_{n} = X_{m}^{(i)}$ for some $m\geq m_{1}$ and $i \in \{1,2,\dots,p\}$, it follows from \eqref{supPart1} and \eqref{supPart2} that $|f^{n}(\zeta)|\leq a_{n}$ as required.
\end{proof}

To complete the proof of Theorem~\ref{MainTheorem} for mappings with infinitely many poles, we shall use a specific case of \cite[Lemma~5.1]{Warren1}. 

\begin{lem} \label{EExists}
	Let $f: \mathbb{R}^d \to \hat{\mathbb{R}}^d $ be a quasimeromorphic mapping of transcendental type. Suppose that there exists an open bounded neighbourhood $U \subset \mathbb{R}^d$ of a pole of $f$ such that $f^{-1}(u)$ is infinite for all $u \in \overline{U}$. Then given any $r>0$, there exists an open bounded region $E_{U} \subset A(r,\infty)$ such that $f(U) \supset \overline{E_{U}}$ and $f(E_{U}) \supset \overline{U}$.
\end{lem}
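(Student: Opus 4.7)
The plan is to construct $E_U$ as a finite union of small preimage balls, enlarged to a connected open set, whose image covers $\overline{U}$ while sitting far from the origin and well inside $f(U)$.

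First I would observe that, since $f$ is a non-constant quasimeromorphic mapping, it is open, and since $U$ contains a pole of $f$, the image $f(U)$ is an open subset of $\hat{\mathbb{R}}^d$ containing $\infty$. Consequently $f(U) \supset A(R_0, \infty) \cup \{\infty\}$ for some $R_0 > 0$. I would then set $R := \max(r, R_0) + 1$, so that any bounded open set whose closure lies in $A(R, \infty)$ automatically sits inside $A(r, \infty)$ and has closure contained in $f(U)$.

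Next I would exploit the hypothesis that $f^{-1}(u)$ is infinite for every $u \in \overline{U}$. Since $f$ is a discrete mapping, $f^{-1}(u)$ is a discrete subset of $\mathbb{R}^d$, and an infinite discrete subset of $\mathbb{R}^d$ must be unbounded. Hence for each $u \in \overline{U}$ I can select $x_u \in f^{-1}(u)$ with $|x_u| > 2R$. Because $u \in \mathbb{R}^d$, the point $x_u$ is not a pole of $f$, so $f$ is continuous and open at $x_u$; I can therefore choose a small open ball $V_u := B(x_u, \epsilon_u)$ with $\epsilon_u < |x_u| - R$ so that $\overline{V_u} \subset A(R, \infty)$, and with $f(V_u) \supset B(u, \delta_u)$ for some $\delta_u > 0$. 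The collection $\{B(u, \delta_u) : u \in \overline{U}\}$ is then an open cover of the compact set $\overline{U}$, so a finite subcover $B(u_1, \delta_{u_1}), \ldots, B(u_N, \delta_{u_N})$ exists, and $E' := \bigcup_{j=1}^{N} V_{u_j}$ is a bounded open set satisfying $\overline{E'} \subset A(R, \infty) \subset f(U)$ and $f(E') \supset \bigcup_{j=1}^{N} B(u_j, \delta_{u_j}) \supset \overline{U}$.

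Finally, to deliver a \emph{region} in the usual sense of a connected open set, I would enlarge $E'$ by adjoining thin open tubes around paths in $A(R, \infty)$ that join the finitely many components of $E'$; such paths exist because $A(R, \infty)$ is connected for $d \geq 2$. Taking the tubes narrow enough ensures that the resulting bounded open connected set $E_U \supset E'$ still satisfies $\overline{E_U} \subset A(R, \infty) \subset f(U)$, and since $f(E_U) \supset f(E') \supset \overline{U}$ and $E_U \subset A(R, \infty) \subset A(r, \infty)$, all the required properties hold. The only real obstacle here is not conceptual but a careful bookkeeping of the radii $\epsilon_u$ and the widths of the connecting tubes, so that $\overline{E_U}$ does not venture down to the sphere $S(R)$ while the image still covers the whole of $\overline{U}$.
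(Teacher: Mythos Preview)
Your proof is correct. The paper does not actually prove this lemma; it simply records it as ``a specific case of \cite[Lemma~5.1]{Warren1}'' and moves on, so there is no in-paper argument to compare against. Your approach is the natural elementary one: use openness of $f$ at the pole to get $f(U)\supset A(R_0,\infty)\cup\{\infty\}$, use discreteness and infinitude of each fibre $f^{-1}(u)$ to place preimage points $x_u$ arbitrarily far out, use openness at each $x_u$ to cover a small ball about $u$, extract a finite subcover of the compact set $\overline{U}$, and finally connect the finitely many preimage balls by thin tubes inside $A(R,\infty)$ to obtain a bounded connected open set. Each step is sound; the only point worth making explicit in a write-up is that the poles of $f$ are discrete, so the balls $V_u$ can be taken pole-free (ensuring $f$ is $\mathbb{R}^d$-valued there), and that the connecting paths can be chosen to lie in $A(R+\eta,\infty)$ for some $\eta>0$ so that thin tubular neighbourhoods of them still have closure in $A(R,\infty)$.
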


\begin{proof}[Proof of Theorem~\ref{MainTheorem}: Infinitely many poles]
	Let $f$ have a sequence of poles $(x_{m})$ tending to $\infty$. Now through Lemma~\ref{EExists} and choosing a subsequence of the poles and relabelling, we can construct the sequences $(R_{m})$, $(U_{m})$ and $(E_{m})$ by induction as follows. 
	
	Initialise $R_{1}=0$ and suppose that $R_{m}$ has been chosen for some $m \in \mathbb{N}$. By removing finitely many terms and relabelling, we may assume without loss of generality that $x_{m} \in A(R_{m},\infty)$ and $x_{m}$ is not an exceptional point. Now set $U_{m}$ to be an open bounded neighbourhood of $x_{m}$, such that $\overline{U_{m}} \subset A(R_{m}, \infty)$ and $f^{-1}(u)$ is infinite for all $u \in \overline{U_{m}}$. By applying Lemma~\ref{EExists}, choose a non-empty open bounded region $E_{m} \subset A(R_{m}, \infty)$ such that 
	\begin{equation} \label{waiting}
	f(U_{m}) \supset \overline{E_{m}} \text{, and } f(E_{m}) \supset \overline{U_{m}}.
	\end{equation}
Finally, choose $R_{m+1} \geq m+1$ such that $A(R_{m+1}, \infty) \subset f(U_{m})$. 

	With the $R_{m}$, $U_{m}$ and $E_{m}$ established, we shall now choose the sets $X_{m}^{(i)}$ that satisfy the hypotheses in Lemma~\ref{lem:HoldUp2} with $p =2$. For each $m \in \mathbb{N}$, define $X_{m}^{(1)} = U_{m}$ and $X_{m}^{(2)} = E_{m}$. Here, it should be noted that we are taking the subsequence $m_{t} =t$ for all $t \in \mathbb{N}$. Firstly, note that \eqref{SmallestXToInfty2} is satisfied, as $\inf\{|x| : x \in U_{m} \cup E_{m}\} \geq R_{m}$ and $R_{m} \to \infty$ as $m \to \infty$. 
	
	Now since every $U_{m}$ is an open neighbourhood of a pole, then $U_{m} \cap J(f) \neq \varnothing$. Also by \eqref{waiting} and Theorem~\ref{JuliaSetProperties}(iii), then $E_{m} \cap J(f) \neq \varnothing$ as well, so (X6) is satisfied. Further, (X5) is satisfied by \eqref{waiting} since for all $m \in \mathbb{N}$,
	\begin{equation*}
	f(X_{m}^{(1)}) \supset X_{m}^{(2)} \text{, and } f(X_{m}^{(2)}) \supset X_{m}^{(1)}.
	\end{equation*}
	
	To show (X4) is satisfied, observe that by construction,
	\begin{equation*}
	f(X_{m}^{(1)}) = f(U_{m}) \supset A(R_{m+1}, \infty) \supset U_{m+1} = X_{m+1}^{(1)}.
	\end{equation*}
	
Finally, an application of Lemma~\ref{lem:HoldUp2} completes the proof of Theorem~\ref{MainTheorem} for functions with an infinite number of poles.
	\end{proof}

\section{Proof of Theorem~\ref{BoundaryTheorem} and counterexamples}

\subsection{Sufficient conditions for Theorem~\ref{BoundaryTheorem}(i)}
Let $f$ be a $K$-quasimeromorphic mapping of transcendental type with at least one pole. To prove Theorem~\ref{BoundaryTheorem}(i), we shall provide sufficient conditions for the existence of infinitely many points in $BO(f) \cap J(f)$ and $BU(f) \cap J(f)$. Sets that satisfy these conditions will then be identified from each case of the proof of Theorem~\ref{MainTheorem}.

Firstly, suppose that there exists some non-empty bounded set $U_{0}$ with $\overline{U_{0}} \cap J(f) \neq \varnothing$ such that
 
\begin{enumerate}
\item[(BO1)] there exists some $N \in \mathbb{N} \cup \{0\}$ and bounded sets $U_{t}$ where $f(U_{N}) \supset U_{0}$ and if $N \geq 1$, then $f(U_{t}) \supset U_{t+1}$ for all $0 \leq t \leq N-1$.
\end{enumerate}

Then by applying Lemma~\ref{NestedSeqn} with $F_{n} = U_{[n]_{N+1}}$ for all $n \in \mathbb{N}$, we get that there exists some $x \in J(f) \cap BO(f) \cap \overline{U_{0}}$. By finding infinitely many such $U_{0}$ with pairwise disjoint closures, then we can conclude that $J(f) \cap BO(f)$ is infinite.

Next, let $V$ be a non-empty bounded set and let $(k_{t})$ be a sequence of natural numbers such that 
\begin{enumerate}
 \item[(BU1)] for each $t \in \mathbb{N}$, there exists a non-empty bounded set $V_{t}$ with $f^{k_{t}}(Y_{t}) \supset V_{t}$ for some subset $Y_{t} \subset V$, and $f^{m_{t}}(Z_{t}) \supset V$ for some subset $Z_{t} \subset V_{t}$ and some $m_{t} \in \mathbb{N}$; and
 \item[(BU2)] $\inf\left\lbrace |x| : x \in V_{t} \right\rbrace \to \infty$ as $t \to \infty$.
 \end{enumerate}
 
Then by applying Lemma~\ref{NestedSeqn} with $F_{2n-1} = Y_{n}$ and $F_{2n} = Z_{n}$ for all $n \in \mathbb{N}$, this gives a sufficient condition for the existence of a point $x \in \mathbb{R}^d$ in $BU(f)$. Moreover, if we have that

\begin{enumerate}
\item[(BU3)] $J(f) \cap \overline{Y_{t}} \neq \varnothing$ for all $t \in \mathbb{N}$,
\end{enumerate} 
then Lemma~\ref{NestedSeqn} gives us a point $y \in J(f) \cap BU(f) \cap \overline{V}$. Recalling Theorem~\ref{JuliaSetProperties}(iii), it is clear that $f^k(y) \in J(f) \cap BU(f)$ for all $k \in \mathbb{N}$, hence it follows that $J(f)\cap BU(f)$ is infinite.

\subsection{Proof of Theorem~\ref{BoundaryTheorem}(i)}

Let $f:\mathbb{R}^d \to \hat{\mathbb{R}}^d$ be a quasimeromorphic mapping of transcendental type with at least one pole but finitely many poles. As $f$ has finitely many poles then by taking $R>0$ sufficiently large, we have that $f: A(R, \infty) \to \mathbb{R}^d$ is a quasiregular mapping with an essential singularity at infinity. We shall first show that $BO(f) \cap J(f)$ and $BU(f) \cap J(f)$ are infinite when $f$ restricted to $A(R,\infty)$ has the pits effect. Indeed, by Lemma~\ref{pitsCoverN} and the arguments directly after Lemma~\ref{pitsMovement}, there exist bounded open balls $B_{t}$, $t \in \mathbb{N}$, such that 

\begin{enumerate}
\item[(i)] $f(B_{t}) \supset B_{s}$ for all $s \leq t$;

\item[(ii)] there exists some natural sequence $(b_{t})$ and some sets $Y_{t} \subset B_{1}$ such that $f^{b_{t}}(Y_{t}) \supset B_{t+1}$ for all $t \in \mathbb{N}$;

\item[(iii)] $\inf\{|x| : x \in B_{t}\} \to \infty$ as $t \to \infty$;

\item[(iv)] $\overline{B_{t}}$ are all pairwise disjoint; and

\item[(v)] $\overline{B_{t}} \cap J(f) \neq \varnothing$ for all $t \in \mathbb{N}$.
 \end{enumerate}

(BO1) is clearly satisfied from (i) and (v), by setting $N=0$ and $U_{0}=B_{1}$. It then follows from (iv) that this can be repeated for each set $B_{t}, t \in \mathbb{N}$ to get infinitely many points. Therefore $BO(f) \cap J(f)$ is infinite.

Now set $V = B_{1}$ and $V_{t} = B_{t+1}$. Then (BU1) is satisfied by (i) and (ii), with $m_{t}=1$ and $k_{t} = b_{t}$ for all $t \in \mathbb{N}$. In addition, (BU2) is satisfied by (iii) and (BU3) is satisfied by (v) and the backward invariance of $J(f)$. Therefore $BU(f) \cap J(f)$ is infinite.

For the other cases, we can follow a similar argument. Indeed, suppose that $f$ is a quasimeromorphic mapping of transcendental type with at least one pole but finitely many, whose restriction to $A(R,\infty)$ for some $R>0$ is a quasiregular mapping that does not have the pits effect. Then from Lemma~\ref{holdUpNoPits}, the arguments immediately after Lemma~\ref{holdUpNoPits}, and Lemma~\ref{BU(f)(ii)}, there are non-empty bounded sets $T_{t}W_{j}$ with $t \in \mathbb{N}$ and $j \in \{1,2,\dots, q_{0}\}$, such that

\begin{enumerate}
\item[(i)] for each $t \in \mathbb{N}$ and $j \in \{1,2,\dots,q_{0}\}$, there exists $i \in \{1,2,\dots,q_{0}\}$ such that $f(T_{t}W_{j}) \supset T_{t}W_{i}$;

\item[(ii)] there exists some constants $i_{0}, j_{0} \in \{1,2,\dots,q_{0}\}$ and $d \in \mathbb{N}$ such that for each $t \in \mathbb{N}$, there is some $c_{t} \in \mathbb{N}$, some subset $Y_{t} \subset T_{2}W_{j_{0}}$ and some subset $Z_{t} \subset T_{t+2}W_{i_{0}}$ where

\begin{equation*}
f^{c_{t}}(Y_{t}) \supset T_{t+2}W_{i_{0}} \text{ and } f^{d}(Z_{t}) \supset T_{2}W_{j_{0}}.
\end{equation*}

\item[(iii)] $\inf\{|x| : x \in \bigcup_{j}T_{t}W_{j}\}\to \infty$ as $t \to \infty$;

\item[(iv)] $T_{t}\overline{W_{j}}$ are all pairwise disjoint; and

\item[(v)] $T_{t}\overline{W_{j}} \cap J(f) \neq \varnothing$ for all $t \in \mathbb{N}$ and $j \in \{1,2,\dots,q_{0}\}$.
\end{enumerate}

Now fix some $t \in \mathbb{N}$ and set $S_{0} = T_{t}W_{1}$. Then using (i), for each $k \in \mathbb{N}$ set $S_{k} = T_{t}W_{i}$ where $T_{t}W_{i} \subset f(S_{k-1})$. Note by (v) that $\overline{S}_{k} \cap J(f) \neq \varnothing$ for all $k \in \mathbb{N}$. 

As $S_{k} \in \{T_{t}W_{i} : i \in \{1,2,\dots,q_{0}\}\}$ for each $k \in \mathbb{N}$, then there must exist some numbers $n_{1}, n_{2} \in \mathbb{N}$, with $n_{1} < n_{2}$, such that $S_{n_{2}}=S_{n_{1}}$. This means that $f(S_{n_{2}-1}) \supset S_{n_{2}}=S_{n_{1}}$. Hence (BO1) is satisfied with $N=n_{2}-n_{1}-1$ and $U_{t} = S_{n_{1}+t}$. It then follows from (iv) and (v) that $BO(f) \cap J(f)$ is infinite.

Next, using (ii) set $V=T_{2}W_{j_{0}}$ and for each $t \in \mathbb{N}$ set $V_{t}= T_{t+2}W_{i_{0}}$. Now it follows from (ii) that $f^{d}(Z_{t}) \supset T_{2}W_{j_{0}}$ for some constant $d \in \mathbb{N}$ and some subsets $Z_{t} \subset V_{t}$. Also, there exists some sequence $c_{t}$ such that $f^{c_{t}}(Y_{t}) \supset V_{t}$ for some subsets $Y_{t} \subset V$. This means that (BU1) is satisfied with $k_{t}=c_{t}$ and $m_{t}=d$ for each $t \in \mathbb{N}$. Further, (BU2) is given by (iii) whilst (BU3) follows from (v) and the backward invariance of $J(f)$. Hence $BU(f) \cap J(f)$ is infinite in this case.

When $f$ has infinitely many poles, for $t,m \in \mathbb{N}$ we can choose neighbourhoods of poles $D_{t}$ and use Lemma~\ref{EExists} to get non-empty bounded sets $E_{t,m}$ such that
\begin{enumerate}
\item[(i)] for each fixed $t \in \mathbb{N}$, we have $f(D_{t}) \supset E_{t,m}$ and $f(E_{t,m}) \supset D_{t}$ for all $m \in \mathbb{N}$;

\item[(ii)] for each fixed $t \in \mathbb{N}$, then $\inf\{|x|: x \in E_{t,m}\} \to \infty \text{ as } m \to \infty$;

\item[(iii)] $\overline{D_{t}}$ are all pairwise disjoint and $\overline{E_{t,m}}$ are all pairwise disjoint; and

\item[(iv)] for each $t, m \in \mathbb{N}$ we have $\overline{D_{t}} \cap J(f) \neq \varnothing$ and $\overline{E_{t,m}} \cap J(f) \neq \varnothing$.
\end{enumerate}

For each $t \in \mathbb{N}$, setting $U_{0} = D_{t}$ and $U_{1} = E_{t,1}$ satisfies (BO1) by (i). It then follows by (iii) that $BO(f) \cap J(f)$ is infinite.

Further, set $V = D_{1}$ and $V_{m} = E_{1,m}$. Then (BU1)~-~(BU3) are all given by (i), (ii) and (iv) respectively, hence $BU(f) \cap J(f)$ is infinite; this completes the proof of Theorem~\ref{BoundaryTheorem}(i).

\subsection{Proof of Theorem~\ref{BoundaryTheorem}(ii)}

Theorem~\ref{BoundaryTheorem}(ii) shall be attained as a corollary to the following result, which is similar to \cite[Lemma~10]{RS5}.

\begin{lem}
Let $f: \mathbb{R}^d \to \hat{\mathbb{R}}^d$ be a quasimeromorphic mapping of transcendental type with at least one pole. Suppose that there is an infinite set $X \subset \mathbb{R}^d$ such that $X$ is completely invariant under $f$ and $\mathbb{R}^d \setminus (X \cup \mathcal{O}_{f}^{-}(\infty))$ is infinite. Then $J(f) \subset \partial X$.
\end{lem}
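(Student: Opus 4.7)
The plan is to pick an arbitrary $x \in J(f)$ and an arbitrary open neighbourhood $U$ of $x$, and show that $U$ meets both $X$ and $\mathbb{R}^d \setminus X$; this gives $x \in \partial X$ and hence the containment $J(f) \subset \partial X$. The workhorse will be Theorem~\ref{JuliaSetProperties}(vi), which says that whenever $U$ is an open set meeting $J(f)$ and $z$ is any non-exceptional point of $\hat{\mathbb{R}}^d$, some iterate $f^k$ carries a point of $U$ to $z$. This handles both the ``classical'' points of $J(f)$ and points in $\overline{\mathcal{O}^{-}_{f}(\infty)}$ uniformly, so no case split is needed.

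First I would produce a point of $U \cap X$. Since $X$ is infinite and $E(f)$ is finite (by Theorem~\ref{PicardTheorem}, $|E(f)| \leq \tilde{q}_0$), I can choose some $y \in X \setminus E(f)$. Applying Theorem~\ref{JuliaSetProperties}(vi) to $U$ and $y$ yields $w \in U$ and $k \in \mathbb{N}$ with $f^k(w) = y \in X$. Backward invariance of $X$ (part of complete invariance) then forces $w \in X$, so $U \cap X \neq \varnothing$.

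Next I would produce a point of $U \setminus X$. Here the hypothesis that $\mathbb{R}^d \setminus (X \cup \mathcal{O}^{-}_{f}(\infty))$ is infinite, together with the finiteness of $E(f)$, lets me pick $y' \in \mathbb{R}^d \setminus (X \cup E(f))$. Applying Theorem~\ref{JuliaSetProperties}(vi) again gives $w' \in U$ and $k' \in \mathbb{N}$ with $f^{k'}(w') = y'$. If $w' \in X$, forward invariance of $X$ would force $f^{k'}(w') \in X$, contradicting $y' \notin X$; hence $w' \in U \setminus X$.

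Since $U$ was an arbitrary neighbourhood of $x$ and intersects both $X$ and its complement, $x \in \partial X$, as required. I do not anticipate any serious obstacle: the only mild subtlety is to be sure that the targets $y, y'$ can be taken outside the exceptional set, which is automatic because $E(f)$ is finite while $X$ and $\mathbb{R}^d \setminus (X \cup \mathcal{O}^{-}_{f}(\infty))$ are infinite. The whole argument is essentially a direct transcription of Rippon and Stallard's \cite{RS5} Lemma~10 into the quasimeromorphic setting, with Theorem~\ref{JuliaSetProperties}(vi) replacing the complex-analytic density statement they used.
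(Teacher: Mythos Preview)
Your proposal is correct and follows essentially the same approach as the paper's proof: both take an arbitrary $x\in J(f)$ and a neighbourhood $U$, use the finiteness of $E(f)$ to select non-exceptional targets in $X$ and in $\mathbb{R}^d\setminus(X\cup\mathcal{O}^{-}_{f}(\infty))$, and then apply Theorem~\ref{JuliaSetProperties}(vi) together with complete invariance to place points of both $X$ and its complement inside $U$. The only cosmetic difference is that the paper phrases the second step via backward invariance of the complement $\mathbb{R}^d\setminus(X\cup\mathcal{O}^{-}_{f}(\infty))$, whereas you phrase it via forward invariance of $X$; these are equivalent.
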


\begin{proof}
Let $x \in J(f)$ and let $U_{x}$ be an arbitrary neighbourhood of $x$. Since $X$ and $\mathbb{R}^d \setminus (X \cup \mathcal{O}_{f}^{-}(\infty))$ are infinite sets, then $X \setminus E(f)$ and $\mathbb{R}^d \setminus (X \cup \mathcal{O}_{f}^{-}(\infty) \cup E(f))$ are non-empty. Now $X$ and $\mathbb{R}^d \setminus (X \cup \mathcal{O}_{f}^{-}(\infty))$ are both completely invariant, so by Theorem~\ref{JuliaSetProperties}(vi) it follows that $X \cap U_{x} \neq \varnothing$ and $\left(\mathbb{R}^d \setminus (X \cup \mathcal{O}_{f}^{-}(\infty))\right) \cap U_{x} \neq \varnothing$.

As $X$ and $\mathbb{R}^d \setminus (X \cup \mathcal{O}_{f}^{-}(\infty))$ are disjoint, then we must have $\partial X \cap U_{x} \neq \varnothing$. Finally, since $U_{x}$ was arbitrary, then $x \in \partial X$ as required.
\end{proof}

Since $I(f), BO(f)$ and $BU(f)$ are all completely invariant and disjoint, then the result follows from Theorem~\ref{BoundaryTheorem}(i).

\subsection{Counterexamples}

To show that the reverse inclusion in Theorem~\ref{BoundaryTheorem} does not necessarily hold, we shall first construct a mapping similar to those found in \cite[Example 7.3]{BN1} and \cite[Example 1]{Dominguez}. This will give a mapping $f$ such that $\left( \partial I(f) \cap \partial BO(f) \right) \setminus J(f) \neq \varnothing$.

\begin{ex}\label{Example1}
Let $h: \mathbb{C} \to \hat{\mathbb{C}}$ be the transcendental meromorphic function defined by $h(z)= 2 + \exp(-z) + (z+1)^{-1}$, and define $g: \mathbb{C} \to \hat{\mathbb{C}}$ by $g(z) = z + h(z)$. Firstly, note that if $z$ is in the half-plane $H_{1}:=\{z: \real(z) > 1\}$, then $h(z) \in \{v: 1<\real(v)<3\}$. Now, we have $g(H_{1}) \subset H_{1}$ and $g^n(z) \to \infty$ as $n \to \infty$ whenever $z \in H_{1}$.

Next, for a large constant $M \in \mathbb{R}$ define $f:\mathbb{C} \to \hat{\mathbb{C}}$ by
\[
f(z) = \left\lbrace \begin{array}{ll}
g(z) & \text{if } \real(z)<M \text{ or } \real(z)>2M, \\
g(z) + h(z)\sin\left( \frac{\pi \real(z)}{M} \right)  & \text{if } M\leq \real(z) \leq 2M.
	\end{array} \right.
\]

It is easy to see that $f$ is a quasimeromorphic mapping of transcendental type with one pole if $M$ is large. 

Similar to $g$, we have that $f(H_{1}) \subset H_{1}$, so $H_{1} \cap J(f) = \varnothing$. Also, the point $w=3M/2$ is such that $f(w)=w$, while $f(x)>x$ for all real $x>w$. This means that $f^n(x) \to \infty$ as $n \to \infty$ for all real $x >w$, thus $(w,\infty) \subset I(f)$ and $w \in BO(f)$. Therefore $w \in (\partial I(f) \cap \partial BO(f)) \setminus J(f)$.

This example can be extended to a quasimeromorphic mapping of transcendental type $\tilde{f}:\mathbb{C} \to \hat{\mathbb{C}}$ with infinitely many poles, by replacing $h$ with $\tilde{h}:\mathbb{C} \to \hat{\mathbb{C}}$ defined by 
\begin{equation*}
\tilde{h}(z) = 2 + \exp(-z) + \sum_{k=1}^{\infty}(z+2^k -1)^{-1},
\end{equation*}
and replacing $g$ with $\tilde{g}: \mathbb{C} \to \hat{\mathbb{C}}$ defined by $\tilde{g}(z) = z+ \tilde{h}(z)$. Here, since $|z+2^k-1|>2^k$ for all $z \in H_{1}$, then $\tilde{h}(z) \in \{v : 1/2 < \real(v)<7/2 \}$ on $H_{1}$. This means that the behaviour of $H_{1}$ and $w=3M/2$ under $\tilde{g}$, hence also for $\tilde{f}$, remains the same as above. 
\end{ex}

The final example is a direct modification of the example constructed in \cite{NS3}, as we will only require specific dynamics in the upper half plane to find a point in $\partial BU(f) \setminus J(f)$.

\begin{ex}\label{Example2}
Let $h:\mathbb{C} \to \mathbb{C}$ be the quasiconformal mapping constructed in \cite[Proof of Theorem~4]{NS3}. This map is such that $BU(h)$ and $BO(h)$ intersect the upper half plane $\mathbb{H} := \{z : \im(z) > 0\}$ non-trivially, and $h(\mathbb{H}) \subset \mathbb{H}$.

Next for a small constant $\alpha>0$, let $g: \mathbb{C} \to \hat{\mathbb{C}}$ be defined by
\[
g(z) = \left\lbrace \begin{array}{ll}
z & \text{if } \im(z) \geq 0, \\
z -\alpha(\im(z))\left( \exp(-z^2) + (z + 4i)^{-1} \right)  & \text{if } \im(z) \in [-1,0), \\
z +\alpha\left( \exp(-z^2) + (z + 4i)^{-1} \right) & \text{otherwise}, \\
	\end{array} \right.
\]

It is easy to show that if $\alpha$ is sufficiently small, then $g$ is a quasimeromorphic mapping of transcendental type with one pole. Note that $g$ is the identity mapping on the upper half plane, so $g(\mathbb{H}) \subset \mathbb{H}$. 

Now the mapping $f := g \circ h$ is also a quasimeromorphic mapping of transcendental type with one pole. It follows that $f(\mathbb{H}) \subset \mathbb{H}$ and so $J(f) \cap \mathbb{H} = \varnothing$. Further since $g$ is the identity mapping on $\mathbb{H}$, then $f$ has the same dynamics on $\mathbb{H}$ as $h$. This means that $\mathbb{H} \cap BU(f) \neq \varnothing$ and $\mathbb{H} \cap BO(f) \neq \varnothing$. As $BO(f)$ and $BU(f)$ are disjoint, then $\mathbb{H} \cap \partial BU(f) \neq \varnothing$, hence $\partial BU(f) \setminus J(f) \neq \varnothing$ as required.

By making a simple modification, we can also create a quasimeromorphic mapping of transcendental type with infinitely many poles, by replacing $(z + 4i)^{-1}$ in the definition of $g(z)$ with $\sum_{k=1}^{\infty}(z + 2^k + 4i)^{-1}$; the dynamics of the new function remain unchanged in $\mathbb{H}$ and hence the result follows.
\end{ex}

\end{document}